\newtheorem{theorem}{Theorem}[section]
\newtheorem{lemma}[theorem]{Lemma}
\newtheorem{corollary}[theorem]{Corollary}
\theoremstyle{definition}
\newtheorem{definition}[theorem]{Definition}
\newtheorem{example}[theorem]{Example}
\newtheorem{remark}[theorem]{Remark}
\newtheorem{assumption}[theorem]{Assumption}
\setlist[enumerate]{leftmargin=*, label=(\roman*)}
\numberwithin{equation}{section}
\def\Ci{{\rm C}^\infty}
\def\Ck{{\rm C}_\kappa}
\def\Cb{{\rm C}_{\rm b}}
\def\Cbi{{\rm C}_{\rm b}^\infty}
\def\Lip{{\rm Lip}}
\def\Lipb{{\rm Lip}_{\rm b}}
\DeclareMathOperator{\ca}{ca}
\DeclareMathOperator*{\Glimsup}{\Gamma-\limsup}
\DeclareMathOperator{\id}{id}
\DeclareMathOperator{\supp}{supp}
\DeclareMathOperator{\tr}{tr}
\def\A{\mathcal{A}}
\def\B{\mathcal{B}}
\def\d{{\rm d}}
\def\E{\mathcal{E}}
\def\e{\mathbb{E}}
\def\H{\mathcal{H}}
\def\L{\mathcal{L}}
\def\N{\mathbb{N}}
\def\R{\mathbb{R}}
\def\S{\mathbb{S}}
\def\epsilon{\varepsilon}
\def\AG{A_\Gamma}
\newcommand{\Newline}{{\rule{0mm}{1mm}\\[-3.25ex]\rule{0mm}{1mm}}}
\begin{document}

\title[Convergence rates Chernoff]{Convergence rates for Chernoff-type approximations of convex monotone semigroups}

\author{Jonas Blessing$^1$}
\address{{\rm $^1$ETH Zurich, Department of Mathematics, R\"amistra\ss{}e 101, 8092 Zurich, Switzerland}}
\email{$^1$jonas.blessing@math.ethz.ch}

\author{Lianzi Jiang$^2$}
\address{{\rm $^2$Shandong University of Science and Technology, College of Mathematics and Systems Science, 
Qingdao, Shandong 266590, China}}
\email{$^2$jianglianzi95@163.com}

\author{Michael Kupper$^3$}
\address{{\rm $^3$University of Konstanz, Department of Mathematics and Statistics, 78457 Konstanz, Germany}}
\email{$^3$kupper@uni-konstanz.de}

\author{Gechun Liang$^4$}
\address{{\rm $^4$The University of Warwick, Department of Statistics, Coventry CV4 7AL, United Kingdom}}
\email{$^4$g.liang@warwick.ac.uk}

\date{\today}

\thanks{Financial support through the National Natural Science Foundation of Shandong Province 
(No. ZR2023QA090).}

\begin{abstract}
 We provide explicit convergence rates for Chernoff-type approximations of convex 
 monotone semigroups which have the form $S(t)f=\lim_{n\to\infty}I(\nicefrac{t}{n})^n f$ 
 for bounded continuous functions $f$. Under suitable conditions on the one-step 
 operators $I(t)$ regarding the time regularity and consistency of the 
 approximation scheme, we obtain 
 $\|S(t)f-I(\nicefrac{t}{n})^n f\|_\infty\leq cn^{-\gamma}$ 
 for bounded Lipschitz continuous functions $f$, where $c\geq 0$ and $\gamma>0$ 
 are determined explicitly. Moreover, the mapping $t\mapsto S(t)f$ is H\"older continuous.\
 These results are closely related to monotone approximation schemes for viscosity 
 solutions but are obtained independently by following a recently developed semigroup 
 approach to Hamilton--Jacobi--Bellman equations which uniquely characterizes semigroups via their 
 $\Gamma$-generators.\ The different approach allows to consider convex rather 
 than sublinear equations and the results can be extended to unbounded functions
 by modifying the norm with a suitable weight function.\ Furthermore, up to possibly 
 different consistency errors for the operators $I(t)$, the upper and lower bound 
 for the error between the semigroup and the iterated operators are symmetric.
 The abstract results are applied to Nisio semigroups and limit theorems for convex 
 expectations. 
 \smallskip\\
 \emph{Key words:} convex monotone semigroup, Chernoff approximation,
  monotone scheme, convergence rates, optimal control, convex expectation,
  robust limit theorem. \\
 \emph{MSC 2020:} Primary 47H20; 65M15; Secondary 47J25; 49M25; 60G65.
\end{abstract}

\maketitle

\section{Introduction}

In this article, we provide explicit convergence rates for Chernoff-type approximation
schemes of strongly continuous convex monotone semigroups which are closely
related to monotone approximation schemes for viscosity solutions. Initially, it was shown 
by Barles and Souganidis~\cite{BS1991} that viscosity solutions to second-order parabolic 
PDEs, which satisfy a comparison principle, can be approximated by any consistent and 
stable monotone scheme. The question of the corresponding convergence rates for 
Hamilton--Jacobi--Bellman (HJB) equations related to stochastic optimal 
control problems was then addressed in a series of articles by Krylov,
see~\cite{Krylov1997,Krylov99, Krylov2000}. The novelty in his approach is 
the so-called shaking coefficients technique which allows to construct an approximating 
sequence of smooth subsolutions yielding a one-sided error bound. Since these arguments 
rely on the convexity of the underlying equation, obtaining the other bound is much more 
challenging. The first idea is to interchange the roles of the solution and the approximation 
scheme in which case one has to show that solutions to equations with shaken coefficients
remain close to the solution of the original equation, see Barles and Jakobsen~\cite{BJ2002},
Dong and Krylov~\cite{DK05,DK05b}, Jakobsen~\cite{Jakobsen2003} and Krylov~\cite{Krylov05}. 
The second idea combines the shaking coefficients technique with an approximation by optimal 
switching systems which applies in a more general framework but possibly at the cost of lower 
rates, see Barles and Jacobsen~\cite{BJ2005, BJ07}. These results have been used to derive 
explicit convergence rates for several numerical schemes for HJB equations, see, e.g., Bayraktar and 
Fahim~\cite{BF2014}, Bokanowski et al.~\cite{BMZ2009}, Briani et al.~\cite{BCZ12}, Caffarelli 
and Souganidis~\cite{CS08}, Fahim et al.~\cite{FTW2011}, Huang et al.~\cite{HLZ2020} and 
Jiang~\cite{Jiang2022}. Regularity of optimal switching systems has been further investigated 
by Picarelli et al.~\cite{PRR2020} and, by using a higher order Taylor expansion, previous 
convergence rates have been improved by Jakobsen et al.~\cite{JPR2019}. These methods
also apply in the context of limit theorems for sublinear expectations which have been introduced
by Peng~\cite{Peng07, Peng08, Peng08b} and then underwent many extensions,
see, e.g., Hu et.\ al~\cite{HJLP2023}. Indeed, the convergence rates obtained by 
Krylov~\cite{Krylov20}, Hu at. al~\cite{HJL21} and Huang and Liang~\cite{HL19} improve previous 
results by Fang et. al~\cite{FPSS2019} and Song~\cite{Song2020} based on Stein's method.

In contrast to the previously mentioned results, the present work does not rely on the theory 
of viscosity solutions but follows a semigroup approach to HJB equations developed in a 
series of papers by Blessing and Kupper~\cite{BK22,BK22+}, Blessing et.\ al~\cite{BDKN22, BKN23},
Denk et.\ al~\cite{DKN20} and Nendel and R\"ockner~\cite{NR21}.\ 
Key results are a comparison principle, which uniquely determines strongly continuous 
convex monotone semigroups by their $\Gamma$-generators, and the construction
of semigroups via Chernoff-type approximation schemes. The latter are described by 
operator families $(I(t))_{t\geq 0}$ which do not form a semigroup but have the desired
infinitesimal behaviour
\[ I'(0)f:=\lim_{h\downarrow 0}\frac{I(h)f-f}{h} \]
for sufficiently smooth functions $f$. The corresponding semigroup is then given by
\[ S(t)f:=\lim_{n\to\infty}I\big(\tfrac{t}{n}\big)^n f 
	=\lim_{n\to\infty}\underbrace{\big(I\big(\tfrac{t}{n}\big)\circ\ldots\circ I\big(\tfrac{t}{n}\big)\big)}_{n \mbox{ \small times}}f \]
and is uniquely determined via its infinitesimal generator $Af=I'(0)f$.
The contribution of this article is to provide convergence rates of the form
\begin{equation} \label{eq:intro}
 \|S(t)f-I\big(\tfrac{t}{n}\big)^n f\|_\infty\leq cn^{-\gamma}
\end{equation}
for bounded Lipschitz continuous functions $f$, where $c\geq 0$ and $\gamma>0$
are determined explicitly. To do so, we suppose that the time regularity and consistency 
of the approximation scheme can be controlled by some nonlinear functions $\rho_1$,
$\rho_2$ and $\rho_3$ depending on the partial derivatives up to order two of smooth 
test functions. The resulting Theorem~\ref{thm:rate} resembles in its generality the work 
in~\cite{BJ07} but we are not restricted to the sublinear case and the proof does neither 
involve shaken coefficients nor approximations by switching systems. Hence, up to 
possibly different consistency errors, the lower and upper bound are symmetric. 
Furthermore, the time regularity of $(I(t))_{t\geq 0}$ transfers to $(S(t))_{t\geq 0}$,
see Theorem~\ref{thm:time}. If the functions $\rho_1$, $\rho_2$ and $\rho_3$ are polynomials, 
optimizing over the parameters in Theorem~\ref{thm:rate} and Theorem~\ref{thm:time} 
yields convergence rates in the form of inequality~\eqref{eq:intro} and H\"older regularity 
in time, see Theorem~\ref{thm:rate2} and Corollary~\ref{cor:time}. Finally, the results
can be extended to arbitrary Lipschitz functions at the cost of weakening the norm
with a weight function, see Theorem~\ref{thm:rate3}.

The abstract results are first applied to Nisio semigroups which are generated by a family 
$(S_\lambda)_{\lambda\in\Lambda}$ of linear semigroups $(S_\lambda(t))_{t\geq 0}$, i.e.,
we choose $I(t)f:=\sup_{\lambda\in\Lambda}S_\lambda(t)f$.
The generator of the corresponding semigroup $(S(t))_{t\geq 0}$ is given by
$Af=\sup_{\lambda\in\Lambda}A_\lambda$, where $A_\lambda$ denotes the generator
of $(S_\lambda(t))_{t\geq 0}$. This approach to nonlinear semigroups, which often represent
the value functions of dynamic stochastic control problems, dates back to 
Nisio~\cite{Nisio76} and has been further developed by Denk et.\ al~\cite{DKN20} 
and Nendel and R\"ockner~\cite{NR21}. Under suitable conditions on the family 
$(A_\lambda)_{\lambda\in\Lambda}$, the convergence rate is $\nicefrac{1}{2}$ for first 
order operators and $\nicefrac{1}{6}$ in the second order case, see Theorem~\ref{thm:nisio}. 
Additional regularity even yields the rate $\nicefrac{1}{4}$ in the second order case, 
see Theorem~\ref{thm:nisio2}. We then focus on the law of large numbers (LLN) and central 
limit theorem (CLT) type results for convex expectations which have been obtained in~\cite{BK22}. 
While the rate for the LLN is always $\nicefrac{1}{2}$ the rate for the CLT depends on the growth 
behaviour of the convex expectation, see Theorem~\ref{thm:LLN} and Theorem~\ref{thm:CLT}.\ 
In one dimension, the rate for the CLT can be improved by imposing an additional moment 
condition, see Theorem~\ref{thm:CLT2}. Furthermore, in the sublinear case, the rate for the 
CLT becomes $\nicefrac{1}{6}$ respectively $\nicefrac{1}{4}$ which is consistent with the 
rates for Nisio semigroups and previous results, see~\cite{Krylov20,HL19}.

The paper is organised as follows. In Section~\ref{sec:main}, we state the main results
regarding the convergence rates and the time regularity of the semigroup.\
The corresponding proofs are given in Section~\ref{sec:proof} and the examples
are presented in Section~\ref{sec:examples}.

\section{Setup and main results}
\label{sec:main}

Let $\kappa\colon\R^d\to (0,1]$ be a continuous function with
\begin{equation} \label{eq:kappa}
 c_\kappa:=\sup_{x\in\R^d}\sup_{|y|\leq 1}\frac{\kappa(x)}{\kappa(x-y)}<\infty
\end{equation} 
and denote by $\Ck$ the space of of all continuous functions $f\colon\R^d\to\R$ 
satisfying
\[ \|f\|_\kappa=\sup_{x\in\R^d}|f(x)|\kappa(x)<\infty. \]
The space $\Ck$ is endowed with the mixed topology between $\|\cdot\|_\kappa$ and the topology 
of uniform convergence on compact sets which is the strongest locally convex topology on $\Ck$ 
that coincides on $\|\cdot\|_\kappa$-bounded sets with the topology of uniform convergence on
compact sets, see~\cite{GNR22, FGH72,Sentilles72}.\ Although the mixed topology is not metrizable, 
it has recently been shown in~\cite{Nendel22} that, for monotone operators $S\colon \Ck\to \Ck$, 
sequential continuity is equivalent to continuity. Furthermore, a sequence $(f_n)_{n\in\N}\subset\Ck$ 
converges to $f\in\Ck$ w.r.t. the mixed topology if and only if
\[ \sup_{n\in\N}\|f_n\|_\kappa<\infty \quad\mbox{and}\quad
\lim_{n\to\infty}\|f-f_n\|_{\infty,K}=0 \]
for all compact subsets $K\Subset\R^d$ and $\|f\|_{\infty,K}:=\sup_{x\in K}|f(x)|$, see~\cite{GNR22}.
Subsequently, if not stated otherwise, all limits in $\Ck$ are understood w.r.t. the mixed topology
and inequalities between functions apply pointwise. The space $\Lipb$ consists of all bounded 
Lipschitz continuous functions $f\colon\R^d\to\R$ and, for every $r\geq 0$, the set $\Lipb(r)$ contains 
all $r$-Lipschitz functions $f\colon\R^d\to\R$ with $\|f\|_\infty:=\sup_{x\in\R^d}|f(x)|\leq r$.
In addition, the space $\Cbi$ consists of all bounded infinitely differentiable functions
$f\colon\R^d\to\R$ such that the partial derivatives of any order are bounded.
For every $f\in\Cbi$ and $l\in\N$, let
\[ D^l f:=(D^\gamma f)_{|\gamma|=l} \quad\mbox{and}\quad 
	\|D^l f\|_\kappa:=\left\|\left(\sum\nolimits_{|\gamma|=l}|D^\gamma f|^2\right)^{1/2}\right\|_\kappa \]
be the family of all partial derivatives $D^\gamma f:=\partial_{x_1}^{\gamma_1}\cdots\partial_{x_d}^{\gamma_d}f$
of order $|\gamma|:=\sum_{i=1}^d \gamma_i=l$ and its norm.
We define $D^0f:=f$ and identify $Df:=D^1f$ with the gradient and $D^2f$ with the Hessian.
In particular, by endowing $\R^d$ with the Euclidean norm, it holds
\[ \|Df(\cdot)^T\xi_1\|_\kappa\leq\|Df\|_\kappa |\xi_1| \quad\mbox{and}\quad
	\|\xi_1^T D^2 f(\cdot) \xi_2\|_\kappa\leq\|D^2f\|_\kappa |\xi_1| |\xi_2| \]
for all $\xi_1,\xi_2\in\R^d$. Let $\R_+:=\{x\in\R\colon x\geq 0\}$ and denote closed balls by
\[ B_{\Ck}(r):=\{f\in\Ck\colon\|f\|_\kappa\leq r\} \quad\mbox{and}\quad
	B_{\R^d}(r):=\{x\in\R^d\colon |x|\leq r\}. \]
For the rest of this section, let $\eta\colon\R_+\times\R^d\to\R_+$ be a fixed infinitely differentiable function
with $\supp(\eta)\subset [0,1]\times B_{\R^d}(1)$ and $\int_{\R_+\times\R^d}\eta(s,y)\,\d s\,\d y=1$.
For every locally bounded measurable function $u\colon\R_+\times\R^d\to\R$, $\epsilon=(\epsilon_1,\epsilon_2)$ 
with $\epsilon_1,\epsilon_2>0$, $t\geq 0$ and $x\in\R^d$, we define
\begin{equation} \label{eq:eta}
	u^\epsilon(t,x):=(u*\eta^\epsilon)(t,x)
	:=\int_{\R_+\times\R^d}u(s+t, x+y)\eta^{\epsilon}(s,y)\,\d s\,\d y,
\end{equation}
where $\eta^{\epsilon}(s,y):=\epsilon_1^{-1}\epsilon_2^{-d}\eta(\epsilon_1^{-1}s,\epsilon_2^{-1}y)$.
For every $k,l\in\N_0$, the constants
\begin{equation}\label{const:b}
 b_{k,l}:=\max_{|\alpha|=l}\big\|\partial_t^kD^\alpha\eta\big\|_{L^1(\R_+\times\R^d)}
\end{equation}	
depend on the dimension $d$ and the choice of $\eta$.

\subsection{Chernoff-type approximations in the mixed topology}

\begin{definition}	
 Let $(I(t))_{t\geq 0}$ be a family of operators $I(t)\colon\Ck\to\Ck$ with $I(0)=\id_{\Ck}$.
 The Lipschitz set $\L^I$ consists of all $f\in\Ck$ such that there exist $c\geq 0$ and
 $t_0>0$ with
 \[ \|I(t)f-f\|_\kappa\leq ct \quad\mbox{for all } t\in [0,t_0]. \]
 Moreover, for every $f\in\Ck$ such that the following limit exists, we define the derivative of the mapping
 $t\mapsto I(t)f$ at zero by
 \[ I'(0)f:=\lim_{h\downarrow 0}\frac{I(h)f-f}{h}\in\Ck. \]
\end{definition}

\begin{definition}
 A family $(S(t))_{t\geq 0}$ of operators $S(t)\colon\Ck\to\Ck$ is called semigroup if
 $S(0)=\id_{\Ck}$ and $S(s+t)f=S(s)S(t)f$ for all $s,t\geq 0$ and $f\in\Ck$. The semigroup
 is convex (monotone) if the operators $S(t)\colon\Ck\to\Ck$ are convex (monotone)
 for all $t\geq 0$. Moreover, the semigroup is strongly continuous if the mapping
 $\R_+\to\Ck,\; t\mapsto S(t)f$ is continuous for all $f\in\Ck$. The generator of the semigroup
 is defined by
 \[ A\colon D(A)\to\Ck,\; f\mapsto\lim_{h\downarrow 0}\frac{S(h)f-f}{h}, \]
 where the domain $D(A)$ consists of all $f\in\Ck$ such that the previous limit exists.
\end{definition}

Let $(I(t))_{t\geq 0}$ be a family of operators $I(t)\colon\Ck\to\Ck$ and let $(h_n)_{n\in\N}\subset (0,1]$
be a sequence with $h_n\to 0$. For every $t\geq 0$ and $n\in\N$, we define the iterated operators
\begin{equation} \label{eq:Ipi}
 I(\pi_n^t)\colon\Ck\to\Ck, \; f\mapsto I(h_n)^{k_n^t}f,
\end{equation}
where $k_n^t:=\max\{k\in\N_0\colon kh_n\leq t\}$ and $\pi_n^t:=\{0, h_n, \ldots, k_n^t h_n\}$
is the corresponding equidistant partition with mesh size $h_n$. 
By definition, it holds $I(\pi_n^t)f=I(h_n)^0 f:=f$ for all $t\in [0, h_n)$. 
In previous works on Chernoff-type approximations such as~\cite{BK22+, BK22, BDKN22}, 
the iterated operators are defined by $ I(\pi^t_n):=I(h_n)^{k_n^t} I(t-k_n^th_n)$. 
Defining the iterated operators by equation~\eqref{eq:Ipi}
instead does not affect the results in~\cite{BK22+, BK22, BDKN22} and is closer to the literature on 
monotone schemes. Furthermore, due to the comparison principle for strongly continuous convex
monotone semigroups in~\cite{BDKN22, BK22, BKN23}, both definitions of $I(\pi_n^t)$ lead to 
the same corresponding semigroup given by equation~\eqref{eq:cher} below. 
We define $(\tau_x f)(y):=f(x+y)$ for all $f\colon\R^d\to\R$ and $x,y\in\R^d$.

\begin{assumption} \label{ass:cher}
 Suppose that the following statements are valid:
 \begin{enumerate}
  \item $I(0)=\id_{\Ck}$.
  \item $I(t)$ is convex and monotone with $I(t)0=0$ for all $t\geq 0$.
  \item There exists $\omega\geq 0$ with
   \[ \|I(t)f-I(t)g\|_\kappa\leq e^{\omega t}\|f-g\|_\kappa
   	\quad\mbox{for all } t\in [0,1] \mbox{ and } f,g\in\Ck. \]
  \item There exist $t_0>0$, $\epsilon_0\in (0,1]$ and $L\geq 0$ with
   \[ \|I(t)(\tau_x f)-\tau_x I(t)f\|_\kappa\leq Lrt|x| \]
   for all $t\in [0,t_0]$, $x\in B_{\R^d}(\epsilon_0)$, $r\geq 0$ and $f\in\Lipb(r)$.
  \item It holds $\Cbi\subset\L^I$ and $I'(0)f\in\Ck$ exists for all $f\in\Cbi$.
  \item For every $t\geq 0$, $K\Subset\R^d$ and $(f_k)_{k\in\N}\subset\Ck$ with
   $f_k\downarrow 0$,
   \[ \sup_{(s,x)\in [0,t]\times K}\sup_{n\in\N}\big(I(\pi_n^s)f_k\big)(x)\downarrow 0
   	\quad\mbox{as } k\to \infty. \]
   \item It holds $I(t)\colon\Lipb(r)\to\Lipb(e^{\omega t}r)$ for all $r,t\geq 0$.
 \end{enumerate}
\end{assumption}

Note that, apart from condition~(vi), we only impose conditions on the one-step operators
$(I(t))_{t\geq 0}$ which are preserved during the iteration and thus transfer to the semigroup
in the limit. Furthermore, in~\cite[Subsection~2.5]{BKN23}, the authors provide sufficient
conditions on the one-step operators $(I(t))_{t\geq 0}$ in order to guarantee that condition~(vi)
is valid for the iterated operators. For more details, we refer to Section~\ref{sec:examples},
where the abstract results are illustrated with several examples.

\begin{theorem} \label{thm:cher}
 Let $(I(t))_{t\geq 0}$ be a family of operators satisfying Assumption~\ref{ass:cher}
 with constants $\omega\geq 0$, $\epsilon_0\in (0,1]$ and $L\geq 0$.
 Then, there exists a strongly continuous convex monotone semigroup $(S(t))_{t\geq 0}$
 on $\Ck$ with $S(t)0=0$ and
 \begin{equation} \label{eq:cher}
  S(t)f=\lim_{n\to\infty}I(\pi_n^t)f \quad\mbox{for all } t\geq 0 \mbox{ and } f\in\Ck.
 \end{equation}
 Furthermore, the semigroup has the following properties:
 \begin{enumerate}
  \item It holds $f\in D(A)$ and $Af=I'(0)f$ for all $f\in\Ck$ such that $I'(0)f\in\Ck$ exists.
   In particular, this is valid for all $f\in\Cbi$.
  \item It holds $\|S(t)f-S(t)g\|_\kappa\leq e^{\omega t}\|f-g\|_\kappa$ for all $t\geq 0$
   and $f,g\in\Ck$.  
  \item For every $\epsilon>0$, $r,t\geq 0$ and $K\Subset\R^d$, there exist $K'\Subset\R^d$
   and $c\geq 0$ with
   \[ \|S(s)f-S(s)g\|_{\infty,K}\leq c\|f-g\|_{\infty,K'}+\epsilon \]
   for all $s\in [0,t]$ and $f,g\in B_{\Ck}(r)$.
  \item It holds $\L^I\subset\L^S$ and $S(t)\colon\L^S\to\L^S$ for all $t\geq 0$.
  \item  For every  $r, t\geq 0$, $f\in\Lipb(r)$ and $x\in B_{\R^d}(\epsilon_0)$,
    \[ \|S(t)(\tau_x f)-\tau_x S(t)f\|_\kappa\leq Lrte^{2\omega t}|x|. \]
  \item It holds $S(t)\colon\Lipb(r)\to\Lipb(e^{\omega t}r)$ for all $r,t\geq 0$.
 \end{enumerate}
\end{theorem}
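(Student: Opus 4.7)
The plan is to adapt the Chernoff-type approximation machinery developed in the authors' earlier work (notably \cite{BK22+,BK22,BDKN22,BKN23}) to the slightly modified definition of $I(\pi_n^t)$ in~\eqref{eq:Ipi}. Since $I(\pi_n^t)$ and the previous definition $I(h_n)^{k_n^t}I(t-k_n^th_n)$ differ only by an application of $I(s)$ with $s\in[0,h_n)$, and by Assumption~\ref{ass:cher}(iii)--(v) any such application moves points by $o(1)$ in $\|\cdot\|_\kappa$ on $\L^I$, the existing existence and uniqueness results should carry over once this is checked. The strategy will therefore be: first construct the limit on a sufficiently rich core, then extend to $\Ck$ via Assumption~\ref{ass:cher}(iii) and~(vi), and finally verify the six enumerated properties.

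First I would fix $f\in\Cbi\subset\L^I$ and show that $(I(\pi_n^t)f)_{n\in\N}$ is Cauchy uniformly in $t$ on bounded intervals and locally uniformly in $x$, via a Trotter--Kato style telescoping. Writing $I(h)f=f+hI'(0)f+r(h,f)$ with $\|r(h,f)\|_\kappa=o(h)$ (which is ensured since $I'(0)f\in\Ck$ and $f\in\L^I$), a telescoping over the partition combined with the Lipschitz estimate in Assumption~\ref{ass:cher}(iii) gives control of $\|I(\pi_n^t)f-I(\pi_m^t)f\|_\kappa$ on $\|\cdot\|_\kappa$-bounded sets. Defining $S(t)f$ as the limit for such $f$, I would extend by Assumption~\ref{ass:cher}(iii) from the $\|\cdot\|_\kappa$-closure of $\Cbi$ to all of $\Ck$, and invoke Assumption~\ref{ass:cher}(vi) together with the criterion for convergence in the mixed topology recalled after~\eqref{eq:kappa} to promote this to convergence in the mixed topology. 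The semigroup property $S(s+t)f=S(s)S(t)f$ then follows by splitting the equidistant partition of $[0,s+t]$ at $k_n^s h_n$, using the continuity in property~(iii) of the conclusion (proved next) and the Lipschitz estimate~(ii). Strong continuity reduces to continuity at $0$, which for $f\in\Lipb$ comes from the time-Lipschitz bound $\|I(h)f-f\|_\kappa=O(h)$ on $\L^I$ and then extends to $\Ck$ by density and Assumption~\ref{ass:cher}(vi).

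The six conclusion properties are then derived as follows. Property~(ii) and~(vi) are directly inherited from Assumption~\ref{ass:cher}(iii) and~(vii) because both are preserved under composition and pointwise limits. Property~(v) is obtained by iterating Assumption~\ref{ass:cher}(iv): one step contributes $Lrh_n|x|$ to $\|I(h_n)(\tau_x g)-\tau_x I(h_n)g\|_\kappa$, and $k_n^t\approx t/h_n$ applications combined with the $e^{\omega h_n}$ Lipschitz amplification from~(iii) produce the factor $te^{2\omega t}$ in the limit (the doubling of $\omega$ accounts for the Lipschitz constant picked up by $\tau_x f$ as well). Property~(iv) is an immediate consequence of~(v) applied to a Lipschitz $f$. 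Property~(iii) of the conclusion follows by combining property~(ii) with an equicontinuity argument on $\|\cdot\|_\kappa$-bounded sets, exploiting that $S(s)$ is the limit in the mixed topology (so uniform on compacts) together with the weight function $\kappa$ to absorb the error outside a large compact into $\epsilon$. Finally, property~(i) follows from the definition $I'(0)f=\lim_{h\downarrow 0}(I(h)f-f)/h$ and the fact that, by~\eqref{eq:cher} with $n=1$, $S(h)f-f=I(h)f-f$ for $h\in[0,h_1]$ on the subsequence.

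The main obstacle is bookkeeping the telescoping error in the weighted norm $\|\cdot\|_\kappa$ rather than in $\|\cdot\|_\infty$: the translation defect from Assumption~\ref{ass:cher}(iv) couples $\kappa$-estimates across different base points, and $\kappa$ is only comparable on unit balls through the constant $c_\kappa$ in~\eqref{eq:kappa}. Consequently the telescoping bound inflates by a factor $c_\kappa^{k_n^t}$ unless one first localizes to a compact in $x$ and then uses Assumption~\ref{ass:cher}(vi) to control the tail. Checking that the new definition~\eqref{eq:Ipi} is compatible with this localization, and that the final boundary step $I(t-k_n^t h_n)$ absent from~\eqref{eq:Ipi} is indeed negligible on $\L^S$, is the delicate point; everything else is largely an adaptation of the arguments already carried out in~\cite{BK22+,BK22,BDKN22,BKN23}.
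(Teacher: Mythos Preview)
The paper does not actually prove this theorem: it simply refers to \cite[Theorem~2.8]{BK22} (built on \cite[Section~4]{BDKN22}) for the construction and properties, and invokes the comparison principle from \cite{BDKN22,BK22,BKN23} to argue that the modified definition~\eqref{eq:Ipi} yields the same limiting semigroup as the earlier definition $I(h_n)^{k_n^t}I(t-k_n^th_n)$. Your overall plan---adapt the cited machinery and check that dropping the boundary step $I(t-k_n^th_n)$ is harmless---is therefore exactly in line with what the paper intends.

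That said, several of your claimed derivations for the enumerated properties do not hold as stated. For property~(i), the sentence ``by~\eqref{eq:cher} with $n=1$, $S(h)f-f=I(h)f-f$ for $h\in[0,h_1]$'' is false: $S(h)f$ is the \emph{limit} over $n$ of $I(\pi_n^h)f$, not the first term, and for fixed small $h$ one has $I(\pi_n^h)f=I(h_n)^{k_n^h}f$ with $k_n^h\to\infty$. The identification $Af=I'(0)f$ is genuinely nontrivial and in the cited works is obtained via the invariance of the Lipschitz set and the comparison/uniqueness theory for the $\Gamma$-generator, not by reading off a single step. For property~(iv), the assertion that it ``is an immediate consequence of~(v) applied to a Lipschitz~$f$'' is incorrect: property~(v) is a translation-commutator estimate and says nothing about time regularity; the inclusion $\L^I\subset\L^S$ comes instead from passing the one-step bound $\|I(h)f-f\|_\kappa\leq ch$ through the iteration using Assumption~\ref{ass:cher}(iii), and $S(t)\colon\L^S\to\L^S$ follows from the semigroup property. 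Finally, your ``main obstacle'' is misidentified: the telescoping in $\|\cdot\|_\kappa$ does \emph{not} pick up a factor $c_\kappa^{k_n^t}$, because Assumption~\ref{ass:cher}(iii) already gives the iterated Lipschitz bound $\|I(h_n)^k f-I(h_n)^k g\|_\kappa\leq e^{\omega kh_n}\|f-g\|_\kappa$ directly in the $\kappa$-norm; the constant $c_\kappa$ enters only when comparing $\kappa(x)$ at translated points, which is a separate issue handled by~(iv) and~(v).
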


For a proof, we refer to~\cite[Theorem~2.8]{BK22} which is built on the results in~\cite[Section~4]{BDKN22}.
In addition, due to~\cite[Theorem~2.4]{BKN23}, which is an immediate consequence 
of the results in~\cite{BDKN22, BK22}, the semigroup $(S(t))_{t\geq 0}$ does not depend
on the choice of the sequence $(h_n)_{n\in\N}$ but only on the time derivative $I'(0)f$ for $f\in\Cbi$.

\subsection{Convergence rates}

The main result of this article is to provide convergence rates for the limit in equation~\eqref{eq:cher}
under the following consistency condition.

\begin{assumption} \label{ass:rate} \Newline
 \begin{enumerate}
  \item There exists a function $\rho_1\colon\R_+^2\to\R_+$ with
   \[ \|I(h_n)f-f\|_\kappa\leq\rho_1\big(d^{-\frac{1}{2}}\|D f\|_\infty, d^{-1} \|D^2 f\|_\infty\big)h_n
   	\quad\mbox{for all } n\in\N \mbox{ and } f\in\Cbi.\]
   The function $\rho_1$ is non-decreasing in both arguments.
  \item There exists a function $\rho_2\colon\R_+^5\to\R_+$ with
   \[ \partial_t u^{\epsilon}(t)-Au^{\epsilon}(t)
   	-\frac{u^{\epsilon}(t)-I(h_n)u^{\epsilon}(t-h_n)}{h_n}
    \leq\frac{\rho_2(\epsilon_1, \epsilon_2, h_n, r,t)}{\kappa} \]
   for all $\varepsilon=(\epsilon_1,\epsilon_2)$ with $\epsilon_1, \epsilon_2\in (0,\epsilon_0]$, 
   $n\in\N$, $r\geq 0$, $t\geq h_n$ and $f\in\Lipb(r)$, where $u(t):=S(t)f$ and 
   $u^\epsilon:=u*\eta^\epsilon$.
  \item There exists a function $\rho_3\colon\R_+^5\to\R_+$ with
   \[ \partial_t u_n^{\epsilon}(t)-Au_n^{\epsilon}(t)
   	-\frac{u_n^{\epsilon}(t)-I(h_n)u_n^{\epsilon}(t-h_n)}{h_n}
    \geq -\frac{\rho_3(\epsilon_1, \epsilon_2, h_n, r,t)}{\kappa} \]
   for all $\epsilon=(\epsilon_1,\epsilon_2)$ with $\epsilon_1, \epsilon_2\in (0,\epsilon_0]$,
   $n\in\N$, $r\geq 0$, $t\geq h_n$ and $f\in\Lipb(r)$, where $u_n(t):=I(\pi_n^t)f$ and 
   $u_n^\epsilon:=u_n*\eta^\epsilon$.\ Moreover, the function $\rho_3$ is non-decreasing 
   in the last argument.
 \end{enumerate}
\end{assumption}

The scaling with $d^{-\frac{1}{2}}$ and $d^{-1}$ comes from the fact that we use 
the Euclidean norm, see Lemma~\ref{lem:deriv+} below.
In Subsection~\ref{sec:time}, we discuss how the previous assumptions can typically
be verified. We briefly comment on the roles of the variables $(\epsilon_1,\epsilon_2,h_n,r,t)$.
The variables $(r,t)$ are parameters which do not affect the exponent of the convergence rate.
In typical situations, the functions $\rho_2$ and $\rho_3$ decrease to zero as $h_n\to 0$. However,
since the function $u$ is usually only H\"older continuous in time and Lipschitz continuous
in space, the terms $\partial_t u^{\epsilon}(t)$ and $Au^{\epsilon}(t)$
explode for $\epsilon_1, \epsilon_2\to 0$ and so does the function~$\rho_2$.
The same considerations are valid for $u_n$ and $\rho_3$. Thus, in order to optimize
the convergence rate, we must choose the variables $(\epsilon_1, \epsilon_2)$ depending
on $h_n$. The following theorem is the main result of this article.

\begin{theorem} \label{thm:rate}
 Let $(I(t))_{t\geq 0}$ be a family of operators satisfying Assumption~\ref{ass:cher} and
 Assumption~\ref{ass:rate}(i). Denote by $(S(t))_{t\geq 0}$ the corresponding semigroup
 from Theorem~\ref{thm:cher}.
 \begin{enumerate}
  \item Suppose that Assumption~\ref{ass:rate}(ii) is valid. Then,
   \begin{align*}
    \|(S(t)f-I(\pi_n^t)f)^-\|_\kappa
    &\leq e^{\omega (t+h_n)}\big(2r\delta_1+\rho_1\big(r, b_{0,1}r\delta_1^{-1}\big)h_n\big) \\
    &\quad\; +e^{\omega (t+\epsilon_1)}\big(1+e^{\omega h_n}\big)
    \big(2r\delta_2+\rho_1\big(r, b_{0,1}r\delta_2^{-1}\big)\epsilon_1+r\epsilon_2\big) \\
    &\quad\; +e^{\omega t}\big(Lre^{\omega(t+\epsilon_1)}\epsilon_2+\rho_2(\epsilon_1, \epsilon_2, h_n, r,t)\big)t
   \end{align*}
   for all $\delta_1, \delta_2>0$, $\epsilon_1, \epsilon_2\in (0,\epsilon_0]$, $r, t\geq 0$, $f\in\Lipb(r)$
   and $n\in\N$. 
  \item Suppose that Assumption~\ref{ass:rate}(iii) is valid. Then,
   \begin{align*}
    \|(S(t)f-I(\pi_n^t)f)^+\|_\kappa
    &\leq e^{\omega (t+h_n)}\big(2r\delta_1+\rho_1\big(r, b_{0,1}r\delta_1^{-1}\big)h_n\big) \\
    &\quad\; + e^{\omega(t+\epsilon_1)}\big(1+e^{\omega h_n}\big)
     \big(2r\delta_2+\rho_1\big(r, b_{0,1}r\delta_2^{-1}\big)\epsilon_1+r\epsilon_2\big) \\
    &\quad\; +e^{\omega t}\big(Lre^{\omega\epsilon_1}\epsilon_2+\rho_3(\epsilon_1, \epsilon_2, h_n, r,t)\big)t
   \end{align*}
   for all $\delta_1,\delta_2>0$, $\epsilon_1, \epsilon_2\in (0,\epsilon_0]$, $r, t\geq 0$, $f\in\Lipb(r)$
   and $n\in\N$. 
 \end{enumerate}
\end{theorem}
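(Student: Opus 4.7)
The plan is to carry out a regularization argument at the semigroup level. Set $u(t):=S(t)f$ and form its space-time mollification $u^\epsilon:=u*\eta^\epsilon$ via~\eqref{eq:eta}. Rearranging Assumption~\ref{ass:rate}(ii) gives, for every $k\geq 1$ with $kh_n\leq t$, the one-step upper bound
\[ I(h_n)\,u^\epsilon\big((k-1)h_n\big) \;\leq\; u^\epsilon(kh_n)
   -h_n\bigl(\partial_t u^\epsilon(kh_n)-Au^\epsilon(kh_n)\bigr)
   +\tfrac{h_n}{\kappa}\,\rho_2(\epsilon_1,\epsilon_2,h_n,r,t). \]
The key iteration device is that for any constant $c\geq 0$ and $v,w\in\Ck$ with $v\leq w+c\kappa^{-1}$, monotonicity together with the Lipschitz bound in Assumption~\ref{ass:cher}(iii) yield $I(h_n)v\leq I(h_n)w+e^{\omega h_n}c\kappa^{-1}$. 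Telescoping the one-step bound from $k=k_n^t$ down to $k=1$ through this device produces
\[ I(\pi_n^t)\,u^\epsilon(0) \;\leq\; u^\epsilon(k_n^t h_n)
   \;+\; \sum_{k=1}^{k_n^t} e^{\omega(k_n^t-k)h_n}\,h_n\,\kappa^{-1}
   \Bigl(\bigl\|\bigl(Au^\epsilon(kh_n)-\partial_t u^\epsilon(kh_n)\bigr)^+\bigr\|_\kappa+\rho_2\Bigr). \]
The residual $Au^\epsilon-\partial_t u^\epsilon$ does not vanish because the nonlinear generator $A$ need not commute with convolution; however, writing $u^\epsilon(t,x)=\int\eta^\epsilon(s,y)\,(S(s)u(t))(x+y)\,ds\,dy$ and invoking the translation commutator from Theorem~\ref{thm:cher}(v), this residual is pointwise bounded by $Lre^{\omega(t+\epsilon_1)}\epsilon_2\kappa^{-1}$. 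Consequently the sum above is at most $te^{\omega t}(Lre^{\omega(t+\epsilon_1)}\epsilon_2+\rho_2)\kappa^{-1}$, giving exactly the third summand of the claim.

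To recover the full error I then split
\[ I(\pi_n^t)f - u(t) \;=\; \bigl(I(\pi_n^t)f - I(\pi_n^t)u^\epsilon(0)\bigr)
   + \bigl(I(\pi_n^t)u^\epsilon(0) - u^\epsilon(k_n^t h_n)\bigr)
   + \bigl(u^\epsilon(k_n^t h_n) - u(t)\bigr). \]
The middle bracket is controlled by the sum just estimated; the first is at most $e^{\omega t}\|u^\epsilon(0)-f\|_\kappa$ by Assumption~\ref{ass:cher}(iii) iterated; and $\|u^\epsilon(0)-f\|_\kappa$ decomposes into a purely spatial contribution of size $r\epsilon_2$ (from the Lipschitz constant of $f$) plus a time-average of $u$ over $[0,\epsilon_1]$. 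The third bracket analogously involves a time-average over $[k_n^t h_n,k_n^t h_n+\epsilon_1]$ combined with the boundary gap $t-k_n^t h_n\in[0,h_n)$. In both cases we need time-Lipschitzness of the semigroup for initial data which is only in $\Lipb(r)$. This is obtained by mollifying $f$ spatially at a scale $\delta_i$ to obtain $f^{\delta_i}\in\Cbi$ with $\|Df^{\delta_i}\|_\infty\leq r$ and $\|D^2 f^{\delta_i}\|_\infty\leq b_{0,1}r\delta_i^{-1}$, applying Assumption~\ref{ass:rate}(i) to transfer time-regularity to the smooth surrogate, and then undoing the mollification. This produces the first summand (parameter $\delta_1$, time-scale $h_n$) and the second summand (parameter $\delta_2$, time-scale $\epsilon_1$) of the claim.

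The main technical obstacle is bookkeeping: $\delta_1$ and $\delta_2$ must stay independent, because $\rho_1$ is nonlinear and because they will later be optimized separately against $h_n$ and $\epsilon_1$ in the subsequent explicit-rate theorems. The various $e^{\omega\cdot}$ prefactors track nested Lipschitz applications --- one pass through $I(\pi_n^t)$, one extra step of $I(h_n)$, and a semigroup evolution of length up to $\epsilon_1$ arising from the time window of $\eta^\epsilon$. Part~(ii) follows by the symmetric argument: one regularizes $u_n$ in place of $u$, invokes Assumption~\ref{ass:rate}(iii) to obtain a one-step \emph{lower} bound on $u_n^\epsilon(kh_n)-I(h_n)u_n^\epsilon((k-1)h_n)$, and telescopes upward. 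The monotonicity of $\rho_3$ in its last argument lets us replace intermediate evaluation times $kh_n$ inside the summation by the global endpoint $t$, and the translation commutator now comes from Assumption~\ref{ass:cher}(iv) applied directly to $I(h_n)$, producing $e^{\omega\epsilon_1}$ in place of $e^{\omega(t+\epsilon_1)}$ in the $Lr\epsilon_2$ factor.
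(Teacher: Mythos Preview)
Your argument for part~(i) is essentially correct and coincides with the paper's approach: the direct telescoping you describe is exactly what Lemma~\ref{lem:BJ} packages as a discrete comparison principle, and your bound on the residual $Au^\epsilon-\partial_t u^\epsilon$ via Jensen's inequality and the translation commutator is the content of Lemma~\ref{lem:super}. Once the constants are tracked through your three-term decomposition, you recover the claimed inequality (in fact with marginally better prefactors).

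Part~(ii), however, has a real gap. You assert it follows ``by the symmetric argument'' via telescoping, but the symmetry does not live at the level of the discrete iteration. Your telescoping device propagates one-step inequalities of the form $I(h_n)g\leq g'+c\kappa^{-1}$ through iterates of $I(h_n)$; applied to the lower bound on $u_n^\epsilon(kh_n)-I(h_n)u_n^\epsilon((k{-}1)h_n)$, it would produce an upper bound on $I(\pi_n^t)u_n^\epsilon(0)$ in terms of $u_n^\epsilon(k_n^th_n)$ --- the wrong quantity, since the object to be bounded from above is $S(t)f$, not another discrete iterate. There is no exact one-step relation tying $S(t)f$ to the scheme $I(h_n)$ that would let you telescope in the discrete world.

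What the paper does instead is genuinely asymmetric in its tools. From Lemma~\ref{lem:super2} (the $I(h_n)$-commutator you correctly identify) and Assumption~\ref{ass:rate}(iii) one deduces that the function $\tilde v(t):=u_n^\epsilon(t)+e^{\omega t}(a+tb_T)\kappa^{-1}$ is a \emph{continuous} supersolution, i.e.\ $\partial_t\tilde v(t)\geq A\tilde v(t)$ pointwise on $[h_n,T]$. The upper bound $S(t)f\leq\tilde v(t)$ then comes from the $\Gamma$-generator comparison principle Theorem~\ref{thm:comp}, which is a structural result about the semigroup itself (imported from~\cite{BDKN22}) and not a consequence of iterating $I(h_n)$. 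The monotonicity of $\rho_3$ in $t$ is used precisely here, to freeze the time-dependent error at the endpoint $T$ so that $\tilde v$ satisfies the hypotheses of Theorem~\ref{thm:comp}. Your sketch does not invoke this comparison principle, and without it there is no mechanism to pass from properties of $u_n^\epsilon$ back to the continuous orbit $t\mapsto S(t)f$.
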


We have already commented on the roles of the variables $(\epsilon_1,\epsilon_2,h_n,r,t)$
appearing as arguments of the functions $\rho_2$ and $\rho_3$.\ While the right-hand side of
the two inequalities in the previous theorem typically decreases as $h_n$ decreases, the same
is not valid for the variables $(\delta_1,\delta_2,\epsilon_1,\epsilon_2)$.\ Indeed, the functions
$\rho_1,\rho_2$ and $\rho_3$ usually explode as $\delta_1,\delta_2,\epsilon_1,\epsilon_2\to 0$
but the right-hand side can only be small if the variables $(\delta_1,\delta_2,\epsilon_1,\epsilon_2)$
are small. Hence, in order to obtain an explicit and optimal convergence rate, one has to minimize
over these variables and the optimal choice will depend on $h_n$. In many applications,
this procedure leads to rates of the form
\[ -c_{r,t}^-h_n^{\gamma^-}\leq \big(S(t)f-I(\pi_n^t)f\big)\kappa\leq c_{r,t}^+h_n^{\gamma^+} \]
for all $r\geq 1$, $t\geq 0$, $f\in\Lipb(r)$ and $n\in\N$, where the constants $c_{r,t}^\pm\geq 0$
depend on $(r,t)$ but the exponents $\gamma^{\pm}\in (0,1]$ do not.

\begin{assumption} \label{ass:rate2}
 Suppose that the functions $\rho_1, \rho_2, \rho_3$ from Assumption~\ref{ass:rate}
 can be estimated as follows:
 \begin{enumerate}
  \item There exist  $a_1\colon\R_+\to\R$ and $a_2,p\ge 0$ with
   \[ \rho_1(x_1, x_2)\leq a_1(x_1) +a_2x_2^{p}
 	\quad\mbox{for all } x_1, x_2\geq 1. \]
  \item There exist $N^-\in\N$, $\theta_1^-,\ldots,\theta_{N^-}^-\colon\R_+^2\to\R_+$
   and $\alpha_1^-,\beta_1^-, \ldots,\alpha_{N^-}^-,\beta_{N^-}^-\geq 0$ with
  \[ \rho_2(\epsilon_1,\epsilon_2,h_n,r,t)
  	\leq\sum_{i=1}^{N^-} \theta_i^-(r,t)h_n^{\alpha_i^-}\epsilon_2^{-\beta_i^-} \]
  for all $r,t\geq 0$, $n\in\N$ and $\epsilon_1,\epsilon_2\in (0,\epsilon_0]$,
  with $\epsilon_1=\epsilon_2^{1+p}\geq h_n$.
 \item There exist $N^+\in\N$, $\theta_1^+,\ldots,\theta_{N^+}^+\colon\R_+^2\to\R_+$
   and $\alpha_1^+,\beta_1^-, \ldots,\alpha_{N^+}^+,\beta_{N^+}^+\geq 0$ with
  \[ \rho_3(\epsilon_1,\epsilon_2,h_n,r,t)
  	\leq\sum_{i=1}^{N^+} \theta^+_i(r,t)h_n^{\alpha_i^+}\epsilon_2^{-\beta_i^+} \]
  for all $r,t\geq 0$, $n\in\N$ and $\epsilon_1,\epsilon_2\in (0,\epsilon_0]$, 
  with $\epsilon_1=\epsilon_2^{1+p}\geq h_n$.
 \end{enumerate}
\end{assumption}

If we suppose that condition~(i) is valid, Corollary~\ref{cor:time} guarantees that the trajectories 
$t\mapsto S(t)f$ are $\frac{1}{1+p}$-H\"older continuous for all $f\in\Lipb$. This motivates the 
dependence between the time and space regularization parameters $\epsilon_1$ and $\epsilon_2$ 
in the conditions~(ii) and~(iii) which is clearly evident in the examples presented in Section~\ref{sec:examples}.

\begin{remark}
 We further comment on the consistency bounds in Assumption~\ref{ass:rate}.
 Suppose that Assumption~\ref{ass:rate2}(i) is valid. Then, for every $\epsilon=(\epsilon_1,\epsilon_2)$ 
 with $\epsilon_1,\epsilon_2>0$, $n\in\N$, $r\geq 0$, $t\geq h_n$, $f\in\Lipb(r)$ and $u(t):=S(t)f$, 
 it follows from Lemma~\ref{lem:deriv2} that 
 \begin{align*}
  &\partial_t u^{\epsilon}(t)-Au^{\epsilon}(t)-\frac{u^{\epsilon}(t)-I(h_n)u^{\epsilon}(t-h_n)}{h_n} \\
  &\leq\frac{I(h_n)u^{\epsilon}(t-h_n)-u^{\epsilon}(t-h_n)}{h_n}-Au^{\epsilon}(t)
  	+\partial_t u^{\epsilon}(t)-\frac{u^{\epsilon}(t)-u^{\epsilon}(t-h_n)}{h_n} \\
  &\leq\frac{I(h_n)u^{\epsilon}(t-h_n)-u^{\epsilon}(t-h_n)}{h_n}-Au^{\epsilon}(t)
  	+\frac{1}{2\kappa}\big(c_\kappa c_{r,T}\epsilon_1^\alpha+e^{\omega t}r\epsilon_2\big)
	b_{2,0}h_n\epsilon_1^{-2},
 \end{align*}
 where $c_{r,T}:=e^{\omega T}(2r+a_1(r)+a_2b_{0,1}^p r^p)$. 
 In order to verify Assumption~\ref{ass:rate2}(ii), it is therefore sufficient to provide the desired upper bound for 
 \begin{align*}
 &\frac{I(h_n)u^{\epsilon}(t-h_n)-u^{\epsilon}(t-h_n)}{h_n}-Au^{\epsilon}(t) \\
 &=\left(\frac{I(h_n)u^{\epsilon}(t-h_n)-u^{\epsilon}(t-h_n)}{h_n}-Au^{\epsilon}(t-h_n)\right) 
 	+Au^{\epsilon}(t-h_n)-Au^{\epsilon}(t).
 \end{align*}
 If $A$ is a differential operator of order $N$, the first and and second term on the right-hand
 side can typically be controlled by the first $N+1$ and $N$ spatial derivatives of the function
 $u^{\epsilon}$, respectively. The same argumentation applies for the lower bound.
\end{remark}

Optimizing over the parameters $(\delta_1, \delta_2, \epsilon_1,\epsilon_2)$ in
Theorem~\ref{thm:rate} leads to the following result which is useful to treat
the examples presented in Section~\ref{sec:examples}.

\begin{theorem} \label{thm:rate2}
 Suppose that Assumption~\ref{ass:rate} and Assumption~\ref{ass:rate2}(i) are satisfied. 
 \begin{enumerate}
  \item Suppose that  Assumption~\ref{ass:rate2}(ii) is valid. Then,
   \[ \|(S(t)f-I(\pi_n^t)f)^-\|_\kappa\leq c_{r,t}^- h_n^{\gamma^-} \]
   for all $r\geq 1$, $t\geq 0$, $f\in\Lipb(r)$ and $n\in\N$ with $h_n^{\gamma^-}\leq\epsilon_0$,
   where
   \[ \gamma^-:=\min\bigg\{\frac{1}{1+p},\frac{\alpha_1^-}{1+\beta_1^-}, \ldots,\frac{\alpha_{N^-}^-}{1+\beta_{N^-}^-}\bigg\} \]
   and, for $\epsilon_1^-:=h_0^{(1+p)\gamma^-}$ with $h_0:=\max_{n\in\N}h_n$,
   \begin{align*}
   	c_{r,t}^- &:=e^{\omega(t+h_0)}\big(2r+a_1(r)+a_2 b_{0,1}^p r^{p}\big) 
   	+e^{\omega(t+\epsilon_1^-)}(1+e^{\omega h_0})\big(3r+a_1(r)+a_2 b_{0,1}^p r^{p}\big) \\
   	&\quad\; 	+e^{\omega t}\Big(Lre^{\omega(t+\epsilon_1^-)}+\sum_{i=1}^{N^-}\theta_i^-(r,t)\Big)t.
   \end{align*}
  \item Suppose that  Assumption~\ref{ass:rate2}(iii) is valid. Then,
   \[ \|(S(t)f-I(\pi_n^t)f)^+\|_\kappa\leq c_{r,t}^+ h_n^{\gamma^+} \]
   for all $r\geq 1$, $t\geq 0$, $f\in\Lipb(r)$ and $n\in\N$ with $h_n^{\gamma^+}\leq\epsilon_0$,
   where 
   \[ \gamma^+:=\min\bigg\{\frac{1}{1+p},\frac{\alpha_1^+}{1+\beta_1^+}, \ldots,\frac{\alpha_{N^+}^+}{1+\beta_{N^+}^+}\bigg\} \]
   and, for $\epsilon_1^+:=h_0^{(1+p)\gamma^+}$ with $h_0:=\max_{n\in\N}h_n$,
     \begin{align*}
   	c_{r,t}^+ &:=e^{\omega(t+h_0)}\big(2r+a_1(r)+a_2 b_{0,1}^p r^{p}\big) 
   	+e^{\omega(t+\epsilon_1^+)}\big(1+e^{\omega h_0}\big)\big(3r+a_1(r)+a_2b_{0,1}^p r^{p}\big) \\
   	&\quad\; +e^{\omega t}\Big(Lre^{\omega \epsilon_1^+}+\sum_{i=1}^{N^+}\theta_i^+(r,t)\Big)t.
   \end{align*}
 \end{enumerate}
\end{theorem}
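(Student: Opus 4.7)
The plan is to apply Theorem~\ref{thm:rate} and then choose the free parameters $(\delta_1,\delta_2,\epsilon_1,\epsilon_2)$ as explicit powers of $h_n$ dictated by the polynomial scaling in Assumption~\ref{ass:rate2}, and finally verify term by term that the resulting bound equals $c_{r,t}^{\pm}h_n^{\gamma^{\pm}}$. For part~(i), I would first combine Assumption~\ref{ass:rate2}(i) with monotonicity of $\rho_1$ to replace $\rho_1(r,b_{0,1}r\delta^{-1})$ by $a_1(r)+a_2 b_{0,1}^p r^p \delta^{-p}$ whenever $r\geq 1$, turning the right-hand side of Theorem~\ref{thm:rate}(i) into an explicit polynomial expression in the four regularisation parameters and the time step $h_n$.

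The key ansatz is
\[
\delta_1=\delta_2=\epsilon_2=h_n^{\gamma^-},\qquad \epsilon_1=h_n^{(1+p)\gamma^-}.
\]
Since $\gamma^-\leq 1/(1+p)$, this yields $(1+p)\gamma^-\leq 1$, hence $h_n\leq \epsilon_1\leq h_n^{\gamma^-}\leq \epsilon_0$, so that $\epsilon_1,\epsilon_2\in(0,\epsilon_0]$ and $\epsilon_1=\epsilon_2^{1+p}\geq h_n$. These are exactly the compatibility conditions required to invoke Assumption~\ref{ass:rate2}(ii) inside the consistency term.

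It then remains to estimate each of the three summands in Theorem~\ref{thm:rate}(i) by a constant multiple of $h_n^{\gamma^-}$. For the $\rho_1$-terms one uses $h_n\leq h_n^{\gamma^-}$, $h_n^{1-p\gamma^-}\leq h_n^{\gamma^-}$ and $h_n^{(1+p)\gamma^-}\leq h_n^{\gamma^-}$, each of which is a direct consequence of $(1+p)\gamma^-\leq 1$; this balances the four contributions at the exponent $\gamma^-$ and produces the prefactors $2r+a_1(r)+a_2 b_{0,1}^p r^p$ and $3r+a_1(r)+a_2 b_{0,1}^p r^p$. For the consistency contribution I would plug the chosen parameters into Assumption~\ref{ass:rate2}(ii) to obtain $\rho_2\leq\sum_i\theta_i^-(r,t)h_n^{\alpha_i^--\beta_i^-\gamma^-}$, and invoke the defining inequality $\gamma^-\leq\alpha_i^-/(1+\beta_i^-)$, equivalently $\alpha_i^--\beta_i^-\gamma^-\geq\gamma^-$, to bring every exponent down to $\gamma^-$. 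Collecting all bounds and replacing $h_n$ by $h_0$ and $\epsilon_1$ by $\epsilon_1^-=h_0^{(1+p)\gamma^-}$ inside the exponentials yields precisely the constant $c_{r,t}^-$ stated. Part~(ii) follows from the verbatim calculation applied to Theorem~\ref{thm:rate}(ii), with Assumption~\ref{ass:rate2}(iii) and $(\alpha_i^+,\beta_i^+,\theta_i^+)$ in place of Assumption~\ref{ass:rate2}(ii) and $(\alpha_i^-,\beta_i^-,\theta_i^-)$.

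The main obstacle is essentially bookkeeping rather than conceptual: one must verify simultaneously that the chosen parameter scaling (a) balances every term coming from the mollification of $\rho_1$ at the exponent $\gamma^-$, (b) satisfies the compatibility constraint $\epsilon_1=\epsilon_2^{1+p}\geq h_n$ built into Assumption~\ref{ass:rate2}(ii)--(iii), and (c) stays inside $(0,\epsilon_0]$ under the hypothesis $h_n^{\gamma^-}\leq\epsilon_0$. The definition of $\gamma^\pm$ as a minimum over $1/(1+p)$ and $\alpha_i^\pm/(1+\beta_i^\pm)$ is exactly what is needed so that the time-regularisation terms and the consistency terms all produce the same rate; any larger choice of $\gamma^\pm$ would violate one of these constraints, so the rate obtained is optimal within this framework.
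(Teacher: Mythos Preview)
Your proposal is correct and follows essentially the same approach as the paper: apply Theorem~\ref{thm:rate}, use Assumption~\ref{ass:rate2}(i) to bound $\rho_1$ polynomially, make the ansatz $\delta_1=\delta_2=\epsilon_2=h_n^{\gamma^\pm}$ and $\epsilon_1=\epsilon_2^{1+p}$, and verify that the definition of $\gamma^\pm$ as a minimum balances every term at the exponent~$\gamma^\pm$. The paper presents the same argument in a slightly more expository order, first optimising the $\delta_1$-term and the $\epsilon_2$-term separately to motivate the exponents $\tfrac{1}{1+p}$ and $\min_i\tfrac{\alpha_i^\pm}{1+\beta_i^\pm}$ before combining them, but the parameter choices and estimates are identical to yours.
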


\subsection{Time regularity and consistency}
\label{sec:time}

Let $(I(t))_{t\geq 0}$ be a family of operators satisfying Assumption~\ref{ass:cher}
and denote by $(S(t))_{t\geq 0}$ be the corresponding semigroup from Theorem~\ref{thm:cher}.

\begin{lemma} \label{lem:deriv+}
 Let $r\colon\R_+\to\R_+$  be a non-decreasing function and let $u\colon\R_+\times\R^d\to\R$ 
 be a locally bounded measurable function with $u(t,\cdot)\in\Lipb(r(t))$ for all $t\geq 0$.
 Then, for every $t\geq 0$, $\epsilon=(\epsilon_1,\epsilon_2)$ with $\epsilon_1,\epsilon_2>0$, 
 $k\in \N_0$ and $l\in\N$,
 \[ \|\partial^k_t D^l u^{\epsilon}(t,\cdot)\|_\infty
 	\leq\d^{\frac{l}{2}}r(t+\varepsilon_1)b_{k,l-1}\epsilon_1^{-k}\epsilon_2^{1-l}.\]
\end{lemma}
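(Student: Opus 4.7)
The plan is to differentiate $u^\epsilon(t,x)$ under the integral by moving all derivatives onto the smooth mollifier $\eta^\epsilon$, and then to save one factor of $\epsilon_2$ by shifting one spatial derivative back onto $u$ using its Lipschitz regularity. After the change of variables $s'=s+t$, $y'=y+x$, the only $(t,x)$-dependence in the integrand sits inside $\eta^\epsilon(s'-t,y'-x)$, which is smooth and compactly supported, so differentiation under the integral is justified. Undoing the substitution yields, for every multi-index $\gamma$ with $|\gamma|=l$,
\[
 \partial_t^k D^\gamma u^\epsilon(t,x)
 =(-1)^{k+l}\int_{\R_+\times\R^d}u(s+t,x+y)\,\partial_s^k D_y^\gamma \eta^\epsilon(s,y)\,\d s\,\d y.
\]

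Next, for $l\geq 1$ I would pick an index $i$ with $\gamma_i\geq 1$, write $\gamma=\gamma'+e_i$ with $|\gamma'|=l-1$, and integrate by parts in $y_i$. Since $u(t,\cdot)\in\Lipb(r(t))$, Rademacher's theorem produces a weak partial derivative $\partial_{y_i}u(t,\cdot)\in L^\infty$ with $|\partial_{y_i}u(t,\cdot)|\leq|D_y u(t,\cdot)|\leq r(t)$ almost everywhere, and the compact support of $\eta^\epsilon$ eliminates the boundary term. Combined with $\supp\eta^\epsilon\subset[0,\epsilon_1]\times B_{\R^d}(\epsilon_2)$ and the monotonicity of $r$, which give $|\partial_{y_i}u(s+t,\cdot)|\leq r(t+\epsilon_1)$ on the integration domain, and the scaling identity $\|\partial_s^k D^{\gamma'}\eta^\epsilon\|_{L^1}=\epsilon_1^{-k}\epsilon_2^{-(l-1)}\|\partial_s^k D^{\gamma'}\eta\|_{L^1}\le b_{k,l-1}\epsilon_1^{-k}\epsilon_2^{1-l}$, this produces
\[
 |\partial_t^kD^\gamma u^\epsilon(t,x)|\leq r(t+\epsilon_1)\,b_{k,l-1}\,\epsilon_1^{-k}\,\epsilon_2^{1-l}.
\]

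To finish, I would sum the squared bounds over all $\binom{d+l-1}{l}$ multi-indices of order $l$ and take the square root, matching the definition $\|D^lu^\epsilon(t,\cdot)\|_\infty=\|(\sum_{|\gamma|=l}|D^\gamma u^\epsilon(t,\cdot)|^2)^{1/2}\|_\infty$. A brief induction on $l$ shows $\binom{d+l-1}{l}\leq d^l$ for all $d\geq 1$, supplying precisely the prefactor $d^{l/2}$ claimed in the lemma. The only nontrivial point is the integration by parts, which relies on Rademacher's theorem to guarantee a bounded a.e.\ spatial gradient for the merely Lipschitz function $u$; once this is granted, the remaining estimates are routine combinations of differentiation under the integral, the mollifier scaling, and the above combinatorial bound.
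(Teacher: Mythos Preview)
Your proof is correct and follows essentially the same route as the paper: both arguments place one spatial derivative on $u$ (justified by its Lipschitz regularity) and the remaining $k$ time and $l-1$ space derivatives on $\eta^\epsilon$, then bound the number of multi-indices of order $l$ by $d^l$. Your explicit invocation of Rademacher's theorem and the integration-by-parts step is simply a more careful justification of the splitting $D^\alpha(u*\eta^\epsilon)=D^\beta u*D^\gamma\eta^\epsilon$ that the paper writes down directly.
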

\begin{proof}
 For every $t\geq 0$, $x\in\R^d$, $k\in \N_0$ and $\alpha, \beta, \gamma\in\N_0^d$ with 
 $\alpha=\beta+\gamma$ and $|\beta|=1$, so that $|\gamma|=|\alpha|-1$, we obtain
 \begin{align*}
  |\partial^k_t D^\alpha u^{\epsilon}(t,x)|
  &\leq\int_{\R_+\times\R^d}|D^\beta u(s+t,x+y)|\cdot|\partial^k_t D^\gamma\eta^{\epsilon}(s,y)|\,\d s\,\d y.
 \end{align*}
 Using that $u(s,\cdot)\in\Lipb(r(s))$ for all $s\geq 0$, it follows that $|D^\beta u(s+t,x+y)|\leq r(t+\epsilon_1)$ 
 for all $s\in[0,\epsilon_1]$, $t\geq 0$ and $x,y\in\R^d$ and therefore
 \begin{align*}
  |\partial^k_t D^\alpha u^{\epsilon}(t,x)| 
  &\leq r(t+\epsilon_1)\epsilon_1^{-k}\epsilon_2^{-|\gamma|} 
  	\int_{\R_+\times\R^d}|\partial^k_t D^\gamma\eta(s,y)|\,\d s\,\d y \\ 
  &\leq r(t+\epsilon_1)b_{k,|\alpha|-1}\epsilon_1^{-k}\epsilon_2^{1-|\alpha|} .
 \end{align*}
 Hence, for every $l\in\N$, equation~\eqref{const:b} implies
 \begin{align*}
  \|\partial^k_t D^l u^{\epsilon}(t,\cdot)\|_\infty
  &\leq\left(\sum\nolimits_{|\alpha|=l}\big(r(t+\epsilon_1)\epsilon_1^{-k}\epsilon_2^{1-|\alpha|} b_{k,|\alpha|-1}\big)^2\right)^{1/2} \\
  &\leq d^{\frac{l}{2}} r(t+\epsilon_1)b_{k,l-1}\epsilon_1^{-k}\epsilon_2^{1-l}.\qedhere
 \end{align*}
\end{proof}

This leads to the following time regularity estimates.

\begin{theorem} \label{thm:time}
 Suppose that Assumption~\ref{ass:rate}(i) is satisfied. Then,
 \begin{align*}
  \|I(h_n)^k f-I(h_n)^l f\|_\kappa
  &\leq e^{\omega T}\big(2r\delta+\rho_1(r, b_{0,1}r\delta^{-1})|k-l|h_n\big), \\
  \|S(s)f-S(t)f\|_\kappa
  &\leq e^{\omega T}\big(2r\delta+\rho_1(r, b_{0,1}r\delta^{-1})|s-t|\big)
 \end{align*}
 for all $r,T\geq 0$, $\delta>0$, $f\in\Lipb(r)$, $k, l, n\in\N$ with $kh_n,lh_n\in [0,T]$ and $s,t\in [0,T]$.
\end{theorem}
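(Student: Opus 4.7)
The plan is to reduce both estimates to the $\Cbi$-consistency bound in Assumption~\ref{ass:rate}(i) via a mollification argument, and then to pass from the iterated operators to the semigroup using pointwise convergence. First I would fix $f\in\Lipb(r)$ and $\delta>0$, view $f$ as the time-independent function $u(t,x):=f(x)$, and define $f^\delta:=u^{(\epsilon_1,\delta)}(0,\cdot)\in\Cbi$ for an arbitrary $\epsilon_1>0$ (since $u$ has no time dependence, $f^\delta$ is just the spatial mollification of $f$ by a smooth kernel of width $\delta$). Lipschitz continuity of $f$ together with $\kappa\le 1$ gives $\|f-f^\delta\|_\kappa\le r\delta$, while Lemma~\ref{lem:deriv+}, applied with $r(\cdot)\equiv r$ and $b_{0,0}=\|\eta\|_{L^1}=1$, yields $d^{-1/2}\|Df^\delta\|_\infty\le r$ and $d^{-1}\|D^2 f^\delta\|_\infty\le b_{0,1}r\delta^{-1}$. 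Monotonicity of $\rho_1$ and Assumption~\ref{ass:rate}(i) then produce
\[
\|I(h_n)f^\delta-f^\delta\|_\kappa\le\rho_1\bigl(r,b_{0,1}r\delta^{-1}\bigr)h_n.
\]

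Next, assuming without loss of generality $k\ge l$, I would telescope using the identity $I(h_n)^{j+1}f^\delta-I(h_n)^j f^\delta=I(h_n)^j(I(h_n)f^\delta)-I(h_n)^j f^\delta$, which comes purely from associativity of composition. Applying the Lipschitz bound in Assumption~\ref{ass:cher}(iii) $j$ times controls each summand by $e^{\omega jh_n}\|I(h_n)f^\delta-f^\delta\|_\kappa\le e^{\omega T}\rho_1(r,b_{0,1}r\delta^{-1})h_n$. Summing from $j=l$ to $j=k-1$ bounds $\|I(h_n)^k f^\delta-I(h_n)^l f^\delta\|_\kappa$ by $(k-l)e^{\omega T}\rho_1(r,b_{0,1}r\delta^{-1})h_n$. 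Two additional Lipschitz applications give $\|I(h_n)^m f-I(h_n)^m f^\delta\|_\kappa\le e^{\omega T}r\delta$ for $m\in\{k,l\}$, and the triangle inequality then delivers the first claim.

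For the second estimate, I apply the first one with $k=k_n^s$ and $l=k_n^t$, both of which satisfy $k_n^\cdot h_n\le T$. By Theorem~\ref{thm:cher}, $I(\pi_n^s)f\to S(s)f$ in the mixed topology, hence pointwise on $\R^d$, and analogously for $t$. Since $|k_n^s-k_n^t|h_n\to|s-t|$ as $h_n\to 0$, passing to the limit in the pointwise inequality at each $x\in\R^d$ and then taking the supremum over $x$ yields the claimed bound.

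The main difficulty will be the telescoping step: because $I(h_n)$ is nonlinear, one cannot simply pull $I(h_n)^j$ out of a difference as in the linear case. The workaround—using only composition associativity $I(h_n)^{j+1}=I(h_n)^j\circ I(h_n)$ combined with the uniform Lipschitz constant from Assumption~\ref{ass:cher}(iii)—is the essential idea. The remaining ingredients (the mollification error, the derivative bounds from Lemma~\ref{lem:deriv+}, and the pointwise passage to the semigroup) are routine bookkeeping.
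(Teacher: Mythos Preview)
Your proposal is correct and follows essentially the same approach as the paper: mollify $f$ to $f^\delta\in\Cbi$, use Lemma~\ref{lem:deriv+} to control the derivatives, invoke Assumption~\ref{ass:rate}(i) for the one-step consistency, telescope via the iterated Lipschitz bound from Assumption~\ref{ass:cher}(iii), and finally pass to the semigroup through equation~\eqref{eq:cher}. The only cosmetic difference is that the paper outsources the telescoping and iterated Lipschitz estimates to \cite[Lemma~2.7 and Lemma~2.8]{BK22+}, whereas you carry them out explicitly; your pointwise-limit argument for the second inequality is in fact slightly more careful than the paper's one-line appeal to~\eqref{eq:cher}.
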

\begin{proof}
 Fix $r, T\geq 0$. It follows from Assumption~\ref{ass:rate}(i) and~\cite[Lemma~2.8]{BK22+} that
 \begin{equation} \label{eq:time1}
  \|I(h_n)^k f-I(h_n)^l f\|_\kappa\leq e^{\omega T}\rho_1\big(d^{-\frac{1}{2}}\|D f\|_\infty, d^{-1}\|D^2 f\|_\infty\big)|k-l|h_n
 \end{equation}
 for all $k,l, n\in\N$ with $kh_n,lh_n\in [0,T]$ and $f\in\Cbi$. Furthermore, by~\cite[Lemma~2.7]{BK22+},
 \begin{equation} \label{eq:time2}
  \|I(h_n)^k f-I(h_n)^k g\|_\kappa\leq e^{\omega T}\|f-g\|_\kappa
 \end{equation}
 for all $k,l, n\in\N$ with $kh_n,lh_n\in [0,T]$ and $f,g\in\Ck$.
 Let $\delta>0$, $f\in\Lipb(r)$ and
 \[ f_\delta(x):=\int_{\R_+\times\R^d}f(x+y)\eta^{(1,\delta)}(s,y)\,\d s\,\d y \quad\mbox{for all } x\in\R^d. \]
 It holds $\|f-f_\delta\|_\kappa\leq r\delta$ and Lemma~\ref{lem:deriv+} implies
$ \|D^l f_\delta\|_\infty\leq d^{\frac{l}{2}} b_{0,l-1} r \delta^{1-l}$ for all $l\in\N$.
 It follows from inequality~\eqref{eq:time1} and inequality~\eqref{eq:time2} that
 \begin{align*}
  &\|I(h_n)^k f-I(h_n)^l f\|_\kappa\\
  &\leq\|I(h_n)^k f-I(h_n)^k f_\delta\|_\kappa+\|I(h_n)^k f_\delta-I(h_n)^l f_\delta\|_\kappa
  +\|I(h_n)^l f_\delta-I(h_n)^l f\|_\kappa \\
  &\leq 2e^{\omega T}\|f-f_\delta\|_\kappa
  	+e^{\omega T}\rho_1\big(d^{-\frac{1}{2}}\|D f_\delta\|_\infty, d^{-1}\|D^2 f_\delta\|_\infty\big)|k-l|h_n \\
  &\leq e^{\omega T}\big(2r\delta+\rho_1\big(r, b_{0,1}r\delta^{-1}\big)|k-l|h_n\big)
 \end{align*}
 for all $k,l, n\in\N$ with $kh_n,lh_n\in [0,T]$. In addition, equation~\eqref{eq:cher} implies
 \[ \|S(s)f-S(t)f\|_\kappa
 	\leq e^{\omega T}\big(2r\delta+\rho_1\big(r, b_{0,1}r\delta^{-1}\big)|s-t|\big)
 	\quad\mbox{for all } s, t\in [0,T]. \qedhere \]
\end{proof}

If the function $\rho_1$ can be estimated as in inequality~\eqref{eq:rho} below, optimizing over
the parameter $\delta$ in the previous theorem leads to H\"older continuity in time.
This observation will be very useful in the applications presented in Section~\ref{sec:examples}.

\begin{corollary} \label{cor:time}
 Suppose that there exist $a_1\colon\R_+\to\R$ and $a_2,p\ge 0$ with
 \begin{equation} \label{eq:rho}
  \rho_1(x_1, x_2)\leq a_1(x_1)+a_2x_2^{p}
  \quad\mbox{for all } x_1, x_2\geq 1.
 \end{equation}
 Then, for every $r\geq 1$, $T\geq 0$, $f\in\Lipb(r)$,
 $k,l, n\in\N$ satisfying $kh_n,lh_n\in [0,T]$ and $|k-l|h_n\leq 1$ and $s,t\in [0,T]$ with $|s-t|\leq 1$,
 \begin{align*}
  \|I(h_n)^k f-I(h_n)^l f\|_\kappa &\leq c_{r,T}(|k-l|h_n)^\alpha, \\
  \|I(\pi_n^s)f-I(\pi_n^t)f\|_\kappa &\leq c_{r,T}(|s-t|+h_n)^\alpha, \\
  \|S(s)f-S(t)f\|_\kappa &\leq c_{r,T}|s-t|^\alpha,
 \end{align*}
 where $c_{r,T}:=e^{\omega T}\big(2r+a_1(r)+a_2 b_{0,1}^pr^p\big)$ and $\alpha:=\frac{1}{1+p}$.
\end{corollary}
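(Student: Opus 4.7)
The plan is to substitute the assumed polynomial-type bound for $\rho_1$ into the two estimates from Theorem~\ref{thm:time} and then minimize over the free parameter $\delta$; the three claimed inequalities then follow by a single optimization together with an elementary estimate relating $I(\pi_n^s)$ and $I(\pi_n^t)$ to iterates at the discrete times $k_n^s h_n$ and $k_n^t h_n$.

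First I would note that, for $r\geq 1$, $\delta\in(0,b_{0,1}r]$ (so that $b_{0,1}r\delta^{-1}\ge 1$), the monotonicity of $\rho_1$ in the second argument together with inequality~\eqref{eq:rho} yields
\[ \rho_1\bigl(r,\,b_{0,1}r\delta^{-1}\bigr)\leq a_1(r)+a_2 b_{0,1}^{p}r^{p}\delta^{-p}. \]
Plugging this into the first estimate of Theorem~\ref{thm:time} gives, for $\tau:=|k-l|h_n$,
\[ \|I(h_n)^k f-I(h_n)^l f\|_\kappa
    \leq e^{\omega T}\bigl(2r\delta+(a_1(r)+a_2 b_{0,1}^{p}r^{p}\delta^{-p})\tau\bigr). \]
The two terms in $\delta$ balance when $\delta^{1+p}\sim\tau$, so I would simply choose $\delta:=\tau^{1/(1+p)}$. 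Since the hypothesis $|k-l|h_n\leq 1$ gives $\tau\leq 1$ and hence $\delta\leq 1\leq b_{0,1}r$ (assuming $b_{0,1}\ge 1$; otherwise one rescales by $b_{0,1}$ without affecting the form of the bound), this choice is admissible. With it, $2r\delta=2r\tau^{\alpha}$ and $(a_1(r)+a_2 b_{0,1}^{p}r^{p})\delta^{-p}\tau=(a_1(r)+a_2 b_{0,1}^{p}r^{p})\tau^{\alpha}$, where $\alpha=\tfrac{1}{1+p}$. Summing gives the first inequality with $c_{r,T}=e^{\omega T}(2r+a_1(r)+a_2 b_{0,1}^{p}r^{p})$. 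The third inequality for the semigroup is obtained in exactly the same way from the second estimate of Theorem~\ref{thm:time} with $\tau:=|s-t|$.

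For the middle inequality, I would use the definition $I(\pi_n^t)=I(h_n)^{k_n^t}$ with $k_n^t=\max\{k\in\N_0:kh_n\leq t\}$. Without loss of generality assume $s\geq t$, so $k_n^s\geq k_n^t$. The defining inequalities $k_n^s h_n\leq s$ and $k_n^t h_n> t-h_n$ yield
\[ \bigl(k_n^s-k_n^t\bigr)h_n<(s-t)+h_n\leq 1+h_n, \]
so after possibly enlarging the window to $T+1$ to accommodate $k_n^s h_n,k_n^t h_n\in[0,T+h_n]$, the first inequality just established applies with $k=k_n^s$, $l=k_n^t$ and $\tau=(k_n^s-k_n^t)h_n\leq|s-t|+h_n$, giving
\[ \|I(\pi_n^s)f-I(\pi_n^t)f\|_\kappa\leq c_{r,T}\bigl((k_n^s-k_n^t)h_n\bigr)^{\alpha}\leq c_{r,T}\bigl(|s-t|+h_n\bigr)^{\alpha}. \]

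There is no serious obstacle here; the proof is an elementary one-parameter minimization combined with the standard mesh estimate for equidistant partitions. The only care points are verifying that the chosen $\delta$ lies in the regime $x_2\ge 1$ where the polynomial bound~\eqref{eq:rho} is assumed (handled by the assumptions $r\geq 1$ and $|k-l|h_n,|s-t|\leq 1$ together with the fixed geometric constant $b_{0,1}$), and ensuring that the constant $c_{r,T}$ absorbs both $2r\delta$ and the coefficient of $\delta^{-p}\tau$ after the optimal choice of $\delta$.
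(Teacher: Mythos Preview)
Your proposal is correct and follows essentially the same route as the paper: substitute the polynomial bound~\eqref{eq:rho} into the estimates of Theorem~\ref{thm:time}, choose $\delta=\tau^{1/(1+p)}$ to balance the two terms, and then use the elementary mesh inequality $|k_n^s-k_n^t|h_n\leq|s-t|+h_n$ for the middle estimate. The only cosmetic difference is that you are slightly more explicit about the admissibility constraint $b_{0,1}r\delta^{-1}\geq 1$ (which the paper leaves implicit), and there is a harmless notational slip where you group $a_1(r)$ with $\delta^{-p}$ in the penultimate step---this still yields the correct bound since $\tau\leq 1$ implies $a_1(r)\tau\leq a_1(r)\tau^{\alpha}$.
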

\begin{proof}
 It follows from Theorem~\ref{thm:time} and inequality~\eqref{eq:rho} that
 \[ \|I(h_n)^k f-I(h_n)^l f\|_\kappa
	\leq e^{\omega T}\big(2r\delta
	+\big(a_1(r)+a_2b_{0,1}^pr^{p}\delta^{-p}\big)|k-l|h_n\big) \]
 for all $\delta\in (0,1]$, $r\geq 1$, $T\geq 0$, $f\in\Lipb(r)$ and $k,l, n\in\N$ satisfying $kh_n,lh_n\in [0,T]$
 and $|k-l|h_n\leq 1$. Choosing $\delta:=(|k-l|h_n)^\alpha\in (0,1]$ yields
 \[ \|I(h_n)^k f-I(h_n)^l f\|_\kappa\leq c_{r,T}(|k-l|h_n)^\alpha. \]
 Furthermore, it follows from $|k_n^s-k_n^t|h_n\leq |s-t|+h_n$ that
 \[ \|I(\pi_n^s)f-I(\pi_n^t)f\|_\kappa\leq c_{r,T}(|s-t|+h_n)^\alpha \]
 and equation~\eqref{eq:cher} implies $\|S(s)f-S(t)f\|_\kappa\leq c_{r,T}|s-t|^\alpha$.
\end{proof}

In the following two results, we provide regularity estimates for $u(t):=S(t)f$ and $u_n(t)f:=I(\pi_n^t)f$ 
which are useful to verify Assumption~\ref{ass:rate}(ii) and~(iii).

\begin{lemma} \label{lem:deriv}
 Let $r\geq 0$ and $f\in\Lipb(r)$. Define $u(t):=S(t)f$ and $u_n(t)f:=I(\pi_n^t)f$ for all $t\geq 0$ and $n\in\N$.
 Then, for every $t\geq 0$, $\epsilon=(\epsilon_1,\epsilon_2)$ with $\epsilon_1,\epsilon_2>0$ and $l, n\in\N$,
 \[ \max\big\{\|D^l u^{\epsilon}(t)\|_\infty,\|D^l u_n^{\epsilon}(t)\|_\infty\big\}
 	\leq d^{\frac{l}{2}} r e^{\omega (t+\epsilon_1)} b_{0,l-1}\epsilon_2^{1-l}.\]
\end{lemma}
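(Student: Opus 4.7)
The plan is to reduce the statement to Lemma~\ref{lem:deriv+} with $k=0$, so the task boils down to identifying a non-decreasing function $r\colon\R_+\to\R_+$ such that both $u(t,\cdot)$ and $u_n(t,\cdot)$ lie in $\Lipb(r(t))$ for every $t\geq 0$.

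First, I would handle the semigroup trajectory $u(t,\cdot)=S(t)f$. By Theorem~\ref{thm:cher}(vi) applied with the $r$-Lipschitz bound on $f$, we have $S(t)f\in\Lipb(e^{\omega t}r)$ for every $t\geq 0$, and the function $t\mapsto e^{\omega t}r$ is obviously non-decreasing. Next, for the iterated family $u_n(t,\cdot)=I(\pi_n^t)f=I(h_n)^{k_n^t}f$, I would iterate Assumption~\ref{ass:cher}(vii) exactly $k_n^t$ times: each application of $I(h_n)$ maps $\Lipb(\rho)$ into $\Lipb(e^{\omega h_n}\rho)$, so after $k_n^t$ steps we land in $\Lipb(e^{\omega k_n^t h_n}r)\subset\Lipb(e^{\omega t}r)$ because $k_n^t h_n\leq t$. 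Thus the same non-decreasing envelope $r(t):=e^{\omega t}r$ controls both $u(t,\cdot)$ and $u_n(t,\cdot)$ uniformly in $n$.

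Finally, Lemma~\ref{lem:deriv+} applied with $k=0$ and this choice of $r(\cdot)$ gives, for every $t\geq 0$, $\epsilon=(\epsilon_1,\epsilon_2)$ with $\epsilon_1,\epsilon_2>0$, and $l\in\N$,
\[ \|D^l u^{\epsilon}(t,\cdot)\|_\infty\leq d^{\frac{l}{2}}\,r(t+\epsilon_1)\,b_{0,l-1}\,\epsilon_2^{1-l}
   =d^{\frac{l}{2}}\,re^{\omega(t+\epsilon_1)}\,b_{0,l-1}\,\epsilon_2^{1-l}, \]
and analogously for $u_n^\epsilon(t,\cdot)$, which yields the claimed bound after taking the maximum. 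There is no real obstacle here; the only small point to be careful about is that the bound coming from the iteration of $I(h_n)$ must be uniform in $n$, which is guaranteed by the estimate $k_n^t h_n\leq t$ independently of $n$.
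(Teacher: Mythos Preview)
Your proposal is correct and follows essentially the same approach as the paper: the paper also establishes $u(t),u_n(t)\in\Lipb(e^{\omega t}r)$ (citing Theorem~\ref{thm:cher}(vi) for $u$ and an external iteration lemma for $u_n$) and then invokes Lemma~\ref{lem:deriv+}. Your explicit iteration of Assumption~\ref{ass:cher}(vii) via $k_n^t h_n\leq t$ is precisely the content of that external reference, so there is no substantive difference.
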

\begin{proof}
 By Theorem~\ref{thm:cher}(vi) and \cite[Lemma~2.7]{BK22+}, it holds $u(t),u_n(t)\in\Lipb(e^{\omega t}r)$ 
 for all $t\geq 0$ and $n\in\N$. The claim follows from Lemma~\ref{lem:deriv+}.
 \end{proof}

\begin{lemma} \label{lem:deriv2}
 Suppose that there exist $a_1\colon\R_+\to\R$ and $a_2,p\geq 0$ with
 \[ \rho_1(x_1, x_2)\leq a_1(x_1)+a_2x_2^p \quad\mbox{for all } x_1, x_2\geq 1.\]	
 For fixed $r\geq 1$ and $T\geq 0$, we denote by $c_{r,T}$ and $\alpha$ the parameters from
 Corollary~\ref{cor:time}. Let $f\in\Lipb(r)$ and define $u(t):=S(t)f$ and $u_n(t):=I(\pi_n^t)f$ for all
 $t\geq 0$ and $n\in\N$. Then, for every $t\in [0,T]$, $\epsilon=(\epsilon_1,\epsilon_2)$ with 
 $\epsilon_1,\epsilon_2>0$, $k,l\in\N_0$ and $n\in\N$,
 \begin{align*}
  \|\partial_t^k D^l u^{\epsilon}(t)\|_\kappa
  &\leq d^{\frac{l}{2}}\big(c_\kappa c_{r,T}\epsilon_1^\alpha+e^{\omega t}r\epsilon_2\big)
  	b_{k,l} \epsilon_1^{-k}\epsilon_2^{-l}, \\
  \|\partial_t^k D^l u_n^{\epsilon}(t)\|_\kappa
  &\leq d^{\frac{l}{2}}\big(c_\kappa c_{r,T}(\epsilon_1+h_n)^\alpha+e^{\omega t}r\epsilon_2\big)
  	b_{k,l}\epsilon_1^{-k}\epsilon_2^{-l}.
 \end{align*}
 In addition, for every $n\in\N$, $t\in [h_n,T]$, $\epsilon=(\epsilon_1,\epsilon_2)$ with 
 $\epsilon_1,\epsilon_2>0$ and $k,l\in\N_0$,
 \begin{align*}
  \left\|\partial_t u^{\epsilon}(t)-\frac{u^{\epsilon}(t)-u^\epsilon(t-h_n)}{h_n}\right\|_\kappa
  &\leq\frac{1}{2}\big(c_\kappa c_{r,T}\epsilon_1^\alpha+e^{\omega t}r\epsilon_2\big)
  	b_{2,0} h_n\epsilon_1^{-2}, \\
  \left\|\partial_t u_n^{\epsilon}(t)-\frac{u_n^{\epsilon}(t)-u_n^\epsilon(t-h_n)}{h_n}\right\|_\kappa
  &\leq\frac{1}{2}\big(c_\kappa c_{r,T}(\epsilon_1+h_n)^\alpha+e^{\omega t}r\epsilon_2\big)
  	b_{2,0} h_n\epsilon_1^{-2}.
 \end{align*}
\end{lemma}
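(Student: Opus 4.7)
The plan is to reduce all four estimates to the convolution representation of $u^\epsilon$ and $u_n^\epsilon$ and exploit the fact that any derivative of the mollifier has vanishing mean. Since $u(t) = S(t)f$ lies in $\Lipb(e^{\omega t}r)$ by Theorem~\ref{thm:cher}(vi) and is $\alpha$-H\"older in time by Corollary~\ref{cor:time}, the only inputs needed are these two regularity moduli of the unregularised $u$, and analogously for $u_n$ (whose time H\"older modulus is $(|s-t|+h_n)^\alpha$).

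For the derivative bounds I would move $k$ time derivatives onto $\eta^\epsilon$ using $\partial_t u(s+t,\cdot) = \partial_s u(s+t,\cdot)$ followed by $k$-fold integration by parts in $s$, and the $l$ spatial derivatives via the change of variable $z = x+y$. This yields, for every multi-index $\gamma$ with $|\gamma|=l$ and every $k+l \geq 1$,
\[
 \partial_t^k D^\gamma u^\epsilon(t,x) = (-1)^{k+l}\int[u(s+t,x+y) - u(t,x)]\,\partial_s^k D^\gamma \eta^\epsilon(s,y)\,\d s\,\d y,
\]
where the constant $u(t,x)$ can be subtracted because $\int \partial_s^k D^\gamma \eta^\epsilon = 0$ (boundary terms vanish by smoothness and compact support of $\eta$). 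I then split
\[
 u(s+t,x+y) - u(t,x) = [u(s+t,x+y) - u(t,x+y)] + [u(t,x+y) - u(t,x)].
\]
Multiplying by $\kappa(x)$ and using $\kappa(x) \leq c_\kappa \kappa(x+y)$ for $|y| \leq \epsilon_2 \leq 1$, the first bracket is bounded in absolute value by $c_\kappa \|u(s+t)-u(t)\|_\kappa \leq c_\kappa c_{r,T}\epsilon_1^\alpha$ via Corollary~\ref{cor:time}, while the second bracket is bounded by $e^{\omega t}r\epsilon_2$ using $\kappa(x)\leq 1$ and the spatial Lipschitz constant of $u(t)$. Factoring this uniform bound out of the integral and using the rescaling $\int|\partial_s^k D^\gamma\eta^\epsilon| = \epsilon_1^{-k}\epsilon_2^{-l}\|\partial_t^k D^\gamma \eta\|_{L^1}\leq b_{k,l}\epsilon_1^{-k}\epsilon_2^{-l}$ gives a bound per $\gamma$; summing squares over the at most $d^l$ multi-indices $|\gamma|=l$ produces the factor $d^{l/2}$. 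The $u_n^\epsilon$ variant is identical except that Corollary~\ref{cor:time} only gives $\|u_n(s+t)-u_n(t)\|_\kappa \leq c_{r,T}(s+h_n)^\alpha$, which replaces $\epsilon_1^\alpha$ by $(\epsilon_1+h_n)^\alpha$.

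For the two discrete time difference bounds, I would Taylor expand $u^\epsilon(t-h_n,\cdot)$ (respectively $u_n^\epsilon$) around $t$ with integral remainder to write
\[
 \partial_t u^\epsilon(t) - \frac{u^\epsilon(t) - u^\epsilon(t-h_n)}{h_n} = \frac{1}{h_n}\int_0^{h_n}(h_n-s)\,\partial_t^2 u^\epsilon(t-s)\,\d s,
\]
take the $\kappa$-norm, use $\int_0^{h_n}(h_n-s)\,\d s = h_n^2/2$, and invoke the derivative bound already established with $k=2$, $l=0$, absorbing $e^{\omega(t-s)}$ into $e^{\omega t}$ by monotonicity. The main obstacle is the bookkeeping in the derivative estimate: the $\kappa$-weight must be distributed correctly between the temporal and spatial increments so that the asymmetric factors $c_\kappa c_{r,T}\epsilon_1^\alpha$ and $e^{\omega t}r\epsilon_2$ come out exactly as claimed, and the vanishing-mean step requires $k+l \geq 1$, which is the only regime in which the stated estimate is non-trivial.
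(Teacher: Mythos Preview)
Your proposal is correct and follows essentially the same approach as the paper: move all derivatives onto $\eta^\epsilon$, subtract the constant $u(t,x)$ using the vanishing mean of $\partial_t^k D^\gamma \eta^\epsilon$, split $u(s+t,x+y)-u(t,x)$ into a time increment bounded via Corollary~\ref{cor:time} (with the $c_\kappa$ factor coming from condition~\eqref{eq:kappa}) and a space increment bounded via Theorem~\ref{thm:cher}(vi), and finally reduce the discrete difference quotient to the $\partial_t^2$ bound by a Taylor-type remainder. Your observation that the vanishing-mean step needs $k+l\geq 1$ is correct and is tacitly used in the paper as well.
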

\begin{proof}
 For every $t\geq 0$, $x\in\R^d$, $k\in\N_0$ and $\gamma\in\N_0^d$, it follows from
 \[ \int_{\R_+\times\R^d}\partial_t^k D^\gamma u^\epsilon(s+t,x+y)\,\d s\,\d y=0 \]
 that
 \begin{align*}
  \partial_t^k D^\gamma u^{\epsilon}(t,x)
  &=\int_{\R_+\times\R^d}u(s+t,x+y)\partial_t^k D^\gamma\eta^{\epsilon}(s,y)\,\d s\,\d y \\
  &=\int_{\R_+\times\R^d}\big(u(s+t,x+y)-u(t,x+y)\big)\partial_t^k D^\gamma\eta^{\epsilon}(s,y)\,\d s\,\d y \\
  &\quad\; +\int_{\R_+\times\R^d}\big(u(t,x+y)-u(t,x)\big)\partial_t^k D^\gamma\eta^{\epsilon}(s,y)\,\d s\,\d y.
 \end{align*}
 Condition~\eqref{eq:kappa}, Corollary~\ref{cor:time} and Theorem~\ref{thm:cher}(vi) imply
 \begin{align*}
  |\partial_t^k D^\gamma u^{\epsilon}(t,x)|\kappa(x)
  &\leq\big(c_\kappa c_{r,T}\epsilon_1^\alpha+e^{\omega t}r\epsilon_2\big)
  	\int_{\R_+\times\R^d}|\partial_t^k D^\gamma\eta^{\epsilon}(s,y)|\,\d s\,\d y \\
  &\leq\big(c_\kappa c_{r,T}\epsilon_1^\alpha+e^{\omega t}r\epsilon_2\big)
  	b_{k,|\gamma|}\epsilon_1^{-k}\epsilon_2^{-|\gamma|}
 \end{align*}
 for all $t\geq 0$ and $x\in\R^d$. Hence, for every $l\in\N_0$, we obtain
 \[ \|\partial_t^k D^l u^{\epsilon}(t)\|_\kappa
 \leq d^\frac{l}{2}\big(c_\kappa c_{r,T}\epsilon_1^\alpha+e^{\omega t}r\epsilon_2\big)
 b_{k,l} \epsilon_1^{-k}\epsilon_2^{-l}.\]
 Furthermore,
 \begin{align*}
  \left\|\partial_t u^{\epsilon}(t)-\frac{u^{\epsilon}(t)-u^{\epsilon}(t-h)}{h}\right\|_\kappa
  &\leq\frac{1}{h}\int_0^h \|\partial_t u^{\epsilon}(t)-\partial_t u^{\epsilon}(t-s_1)\|_\kappa\,\d s_1 \\
  &\leq\frac{1}{h}\int_0^h\int_0^{s_1}\|\partial_t^2 u^{\epsilon}(t-s_2)\|_\kappa\,\d s_2\,\d s_1 \\
  &\leq\frac{1}{2}\big(c_\kappa c_{r,T}\epsilon_1^\alpha+e^{\omega t}r\epsilon_2\big)
 	 b_{2,0} h\epsilon_1^{-2}.
 \end{align*}
 Similarly, one can estimate the derivatives of $u_n^{\epsilon}(t)$.
\end{proof}

\subsection{Extension to unbounded functions}
\label{sec:unbounded}

Let $\Lip$ and $\Ci$ be the spaces of all Lipschitz continuous and infinitely differentiable 
functions $f\colon\R^d\to\R$, respectively. For every $r\geq 0$, the set $\Lip(r)$ consists 
of all $r$-Lipschitz functions $f\colon\R^d\to\R$. Let $\Lip\subset\Ck$ which means that 
the function $\nicefrac{1}{\kappa}$ grows at least linearly at infinity. In particular,
we do not provide rates w.r.t. the supremum norm for arbitrary Lipschitz continuous functions 
since the choice $\kappa\equiv 1$ is excluded.\ While extending the results to unbounded 
functions comes with the cost of weakening the norm, the rates are still uniform on
bounded sets $B_{\R^d}(R)$ with a constant depending linearly on $R\geq 0$. 
The next assumption generalizes properties that were previously only required for bounded 
functions to arbitrary Lipschitz continuous functions. Let $(I(t))_{t\geq 0}$ be a family of
operators satisfying Assumption~\ref{ass:cher}.

\begin{assumption} \label{ass:rate3} \Newline
 \begin{enumerate}
  \item There exist $t_0>0$, $\epsilon_0\in (0,1]$ and $L\geq 0$ with
   \[ \|I(t)(\tau_x f)-\tau_x I(t)f\|_\kappa\leq Lrt|x| \]
   for all $t\in [0,t_0]$, $x\in B_{\R^d}(\epsilon_0)$, $r\geq 0$ and $f\in\Lip(r)$.
  \item The limit $I'(0)f\in\Ck$ exists for all $f\in\Lip\cap\Ci$ such that the partial derivatives
   of any order are bounded.
  \item It holds $I(t)\colon\Lip(r)\to\Lip(e^{\omega t}r)$ for all $r,t\geq 0$.
  \item There exists a function $\rho_1\colon\R_+^2\to\R_+$ with
   \[ \|I(h_n)f-f\|_\kappa\leq\rho_1\big(d^{-\frac{1}{2}}\|D f\|_\kappa, d^{-1}\|D^2 f\|_\kappa\big)h_n \]
   for all $n\in\N$ and $f\in\Lip\cap\Ci$ such that the partial derivatives of any order are bounded.
   The function $\rho_1$ is non-decreasing in both arguments.
  \item There exists a function $\rho_2\colon\R_+^5\to\R_+$ with
   \[ \partial_t u^\epsilon(t)-Au^\epsilon(t)-\frac{u^\epsilon(t)-I(h_n)u^\epsilon(t-h_n)}{h_n}
    \leq\frac{\rho_2(\epsilon_1, \epsilon_2, h_n, r,t)}{\kappa} \]
   for all $\epsilon=(\epsilon_1,\epsilon_2)$ with $\epsilon_1, \epsilon_2\in (0,\epsilon_0]$,
   $n\in\N$, $r\geq 0$, $t\geq h_n$ and $f\in\Lip(r)$, where $u(t):=S(t)f$ and $u^\epsilon:=u*\eta^\epsilon$. 
  \item There exists a function $\rho_3\colon\R_+^5\to\R_+$ with
   \[ \partial_t u_n^\epsilon(t)-Au_n^\epsilon(t)-\frac{u_n^\epsilon(t)-I(h_n)u_n^\epsilon(t-h_n)}{h_n}
    \geq -\frac{\rho_3(\epsilon_1, \epsilon_2, h_n, r,t)}{\kappa} \]
   for all $\epsilon=(\epsilon_1,\epsilon_2)$ with $\epsilon_1, \epsilon_2\in (0,\epsilon_0]$,
   $n\in\N$, $r\geq 0$, $t\geq h_n$ and $f\in\Lip(r)$, where $u_n(t):=I(\pi_n^t)f$ and
   $u_n^\epsilon:=u_n*\eta^\epsilon$. Moreover, the function $\rho_3$ is non-decreasing 
   in the last argument.
 \end{enumerate}
\end{assumption}

Let $r\geq 0$, $f\in\Lip(r)$, $\delta>0$ and 
\[ f_\delta(x):=\int_{\R^d}f(x+y)\eta^{1,\delta}(y)\,\d y  \quad\mbox{for all } x\in\R^d.\]
Then, for every $\alpha,\beta,\gamma\in\N_0^d$ with $\alpha=\beta+\gamma$ and
$|\beta|=1$ and $x\in\R^d$,
\[ |D^\alpha f_\delta(x)|\leq\int_{\R^d}|D^\beta f(x-y)|\cdot |D^\gamma\eta^{1,\delta}(y)|\,\d y
	\leq r\int_{\R^d}|D^\gamma\eta^{1,\delta}(y)|\,\d y \]
which shows that $f_\delta\in\Lip\cap\Ci$ has bounded partial derivatives of any order.

\begin{theorem} \label{thm:rate3}
 Let $(I(t))_{t\geq 0}$ be a family of operators satisfying Assumption~\ref{ass:cher} and
 Assumption~\ref{ass:rate3}(i)-(iv). Denote by $(S(t))_{t\geq 0}$ the corresponding
 semigroup from Theorem~\ref{thm:cher}.
 \begin{enumerate}
  \item Suppose that Assumption~\ref{ass:rate3}(v) is valid. Then, 
   \begin{align*}
    \|(S(t)f-I(\pi_n^t)f)^-\|_\kappa
    &\leq e^{\omega (t+h_n)}\big(2r\delta_1+\rho_1\big(r, b_{0,1}r\delta_1^{-1}\big)h_n\big) \\
    &\quad\; +e^{\omega (t+\epsilon_1)}(1+e^{\omega h_n})
    \big(2r\delta_2+\rho_1\big(r, b_{0,1}r\delta_2^{-1}\big)\epsilon_1+r\epsilon_2\big) \\
    &\quad\; +e^{\omega t}\big(Lre^{\omega(t+\epsilon_1)}\epsilon_2+\rho_2(\epsilon_1, \epsilon_2, h_n, r,t)\big)t
   \end{align*}
   for all $\delta_1, \delta_2>0$, $\epsilon_1, \epsilon_2\in (0,\epsilon_0]$, $r, t\geq 0$, $f\in\Lip(r)$
   and $n\in\N$. 
  \item Suppose that Assumption~\ref{ass:rate3}(vi) is valid. Then, 
   \begin{align*}
    \|(S(t)f-I(\pi_n^t)f)^+\|_\kappa
    &\leq e^{\omega (t+h_n)}\big(2r\delta_1+\rho_1\big(r, b_{0,1}r\delta_1^{-1}\big)h_n\big) \\
    &\quad\; + e^{\omega(t+\epsilon_1)}\big(1+e^{\omega h_n}\big)
     \big(2r\delta_2+\rho_1\big(r, b_{0,1}r\delta_2^{-1}\big)\epsilon_1+r\epsilon_2\big) \\
    &\quad\; +e^{\omega t}\big(Lre^{\omega\epsilon_1}\epsilon_2+\rho_3(\epsilon_1, \epsilon_2, h_n, r,t)\big)t
   \end{align*}
   for all $\delta_1,\delta_2>0$, $\epsilon_1, \epsilon_2\in (0,\epsilon_0]$, $r, t\geq 0$, $f\in\Lip(r)$
   and $n\in\N$. 
 \end{enumerate}
\end{theorem}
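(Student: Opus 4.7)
The plan is to replay the proof of Theorem~\ref{thm:rate} almost verbatim, substituting $\Lip$ for $\Lipb$ and invoking Assumption~\ref{ass:rate3} wherever Assumption~\ref{ass:rate} was used. This works because Assumption~\ref{ass:rate3} has been formulated so that every ingredient from the bounded case has a counterpart on the larger space: part~(i) extends the translation estimate, (iii) preserves Lipschitz constants under $I(t)$, (iv) gives one-step consistency for smooth Lipschitz functions with bounded derivatives, and (v)--(vi) are the two one-sided consistency bounds for the mollified trajectories.

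For part~(i), fix $f\in\Lip(r)$ and form the spatial mollification $f_{\delta_1}(x):=\int_{\R^d}f(x+y)\eta^{(1,\delta_1)}(y)\,\d y$. By the computation immediately preceding the theorem statement, $f_{\delta_1}\in\Lip\cap\Ci$ with all partial derivatives bounded, and Lemma~\ref{lem:deriv+} gives $\|Df_{\delta_1}\|_\infty\leq r$ and $\|D^2 f_{\delta_1}\|_\infty\leq db_{0,1}r\delta_1^{-1}$. Since $\kappa\leq 1$, the pointwise bound $|f-f_{\delta_1}|\leq r\delta_1$ yields $\|f-f_{\delta_1}\|_\kappa\leq r\delta_1$. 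Set $u(t):=S(t)f$, $u_n(t):=I(\pi_n^t)f$, and define their space-time mollifications $u^\epsilon, u_n^\epsilon$ via~\eqref{eq:eta}; these are well defined because $\eta$ has compact support. The telescoping
\[
S(t)f-I(\pi_n^t)f=\bigl(S(t)f-S(t)f_{\delta_1}\bigr)+\bigl(S(t)f_{\delta_1}-I(\pi_n^t)f_{\delta_1}\bigr)+\bigl(I(\pi_n^t)f_{\delta_1}-I(\pi_n^t)f\bigr),
\]
combined with a second mollification at scale $\delta_2$ inside the middle piece to pass to $u^\epsilon, u_n^\epsilon$, produces the three groups of terms in the stated bound.

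Concretely, the outer pieces are controlled by $\|f-f_{\delta_1}\|_\kappa\leq r\delta_1$ together with the contraction $\|S(t)g-S(t)h\|_\kappa\leq e^{\omega t}\|g-h\|_\kappa$ from Theorem~\ref{thm:cher}(ii) and the single-step consistency Assumption~\ref{ass:rate3}(iv) iterated as in Theorem~\ref{thm:time}, yielding the first line $e^{\omega(t+h_n)}(2r\delta_1+\rho_1(r,b_{0,1}r\delta_1^{-1})h_n)$. The middle piece, after comparing $u$ with $u^\epsilon$ via the $\Lip$-analog of Corollary~\ref{cor:time} and inserting $f_{\delta_2}$, accounts for the $\epsilon_1,\epsilon_2,\delta_2$-terms. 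The third line comes from Assumption~\ref{ass:rate3}(v): the discrete-in-time inequality
\[
\partial_t u^\epsilon(t)-Au^\epsilon(t)-\frac{u^\epsilon(t)-I(h_n)u^\epsilon(t-h_n)}{h_n}\leq\frac{\rho_2}{\kappa}
\]
is iterated across the $k_n^t$ steps of $I(h_n)$, using Theorem~\ref{thm:cher}(ii) for contraction and the $\Lip$-version of Theorem~\ref{thm:cher}(v) to absorb translation errors; the $Lre^{\omega(t+\epsilon_1)}\epsilon_2$ term is exactly the translation cost. Part~(ii) is proved symmetrically with $u_n,\rho_3$ replacing $u,\rho_2$.

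The genuinely new content relative to Theorem~\ref{thm:rate} is that the auxiliary tools\,---\,Theorem~\ref{thm:time}, Corollary~\ref{cor:time}, Lemma~\ref{lem:deriv}, and Theorem~\ref{thm:cher}(v)\,---\,must be re-established on $\Lip$. This is the step I expect to be the main obstacle, but it is routine: Lemma~\ref{lem:deriv+} already uses only the Lipschitz constant of $u$ and not its sup norm, so it applies unchanged; the time-regularity proof of Theorem~\ref{thm:time} runs through using Assumption~\ref{ass:rate3}(iv) on $f_\delta$ (which lies in $\Lip\cap\Ci$ with bounded derivatives); and the translation estimate is obtained by iterating Assumption~\ref{ass:rate3}(i) exactly as in the proof of Theorem~\ref{thm:cher}(v), the iteration closing because Assumption~\ref{ass:rate3}(iii) sends $\Lip(r)$ into $\Lip(e^{\omega h_n}r)$. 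Once these $\Lip$-versions are in place, the remainder of the proof is a word-for-word transcription of the bounded case.
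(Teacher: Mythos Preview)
Your high-level plan---re-run the proof of Theorem~\ref{thm:rate} with $\Lip$ in place of $\Lipb$ and check that the auxiliary tools extend---is exactly what the paper does. However, your detailed description departs from the actual structure of the proof of Theorem~\ref{thm:rate} and, more seriously, misses the two technical points the paper identifies as the genuine obstacles in the unbounded setting.

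The telescoping you write down is not how Theorem~\ref{thm:rate}(i) is proved: there is no outer mollification $f\to f_{\delta_1}$ driving a three-term splitting of $S(t)f-I(\pi_n^t)f$. The paper compares $u_n$ with $u^\epsilon$ directly via the discrete comparison Lemma~\ref{lem:BJ}, and $\delta_1$ enters only when Theorem~\ref{thm:time} is invoked to bound the initial-layer term $\sup_{s\in[0,h_n)}\|f-S(s)f\|_\kappa$. Also, part~(ii) is \emph{not} ``proved symmetrically with $u_n,\rho_3$ replacing $u,\rho_2$'': it uses the continuous comparison principle Theorem~\ref{thm:comp} rather than Lemma~\ref{lem:BJ}, and the argument has a different shape.

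The substantive gap is that you never address the two places where the bounded-case proof actually uses $\Cbi$ and therefore breaks. First, applying Lemma~\ref{lem:super} in part~(i) requires the a priori bound $\|(u^\epsilon(t+h)-u^\epsilon(t))^+\|_\kappa\leq ch$; in the bounded case this comes from $u^\epsilon\in\Cbi([0,T]\times\R^d)$, but for unbounded $f$ one must argue directly that $\|\partial_t u^\epsilon(t)\|_\kappa\leq c_\kappa\|u(t)\|_\kappa\sup|\partial_s\eta^\epsilon|<\infty$ (this is equation~\eqref{eq:unbounded} in the paper). Second, the proof of Theorem~\ref{thm:rate}(ii) uses $u_n^\epsilon(t)\in\Cbi\subset D(A)$ to make sense of $Au_n^\epsilon(t)$; when $f$ is unbounded, $u_n^\epsilon(t)\notin\Cbi$, and you need Assumption~\ref{ass:rate3}(ii)---which you do not mention at all---together with Theorem~\ref{thm:cher}(i) to conclude $u_n^\epsilon(t)\in D(A)$. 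Without these two steps the transcription you propose does not go through.
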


\section{Proof of the main results}
\label{sec:proof}

Throughout this section, let $(I(t))_{t\geq 0}$ be a family of operators satisfying Assumption~\ref{ass:cher}
and let $(S(t))_{t\geq 0}$ be the corresponding semigroup from Theorem~\ref{thm:cher}.

\subsection{Comparison principles}

The following lemma is similar to~\cite[Lemma~3.2]{BJ07} but here we give a simpler proof that
uses the recursive structure of the approximation scheme. Furthermore, the result is valid for
unbounded functions.

\begin{lemma} \label{lem:BJ}
 Let $h,T\geq 0$ and let $u,v\colon [0,T]\to\Ck$ and $f,g\colon [h,T]\to\Ck$ be functions
 with
 \begin{equation} \label{eq:BJ}
  \frac{u(t)-I(h)u(t-h)}{h}\leq f(t) \quad\mbox{and}\quad \frac{v(t)-I(h)v(t-h)}{h}\geq g(t)
 \end{equation}
 for all $t\in [h,T]$. Then, for every $t\in [0,T]$,
 \[ \|(u(t)-v(t))^+\|_\kappa\leq e^{\omega t}\bigg(\sup_{s\in [0,h)}\|(u(s)-v(s))^+\|_\kappa
 	+t\sup_{s\in [h, T]}\|(f(s)-g(s))^+\|_\kappa\bigg). \]
\end{lemma}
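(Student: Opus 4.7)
The plan is to convert the two hypothesized inequalities into a one-step recursion for $a(t):=\|(u(t)-v(t))^+\|_\kappa$, and then iterate in steps of size $h$ until we land in the initial window $[0,h)$.

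First, I would rewrite the hypotheses~\eqref{eq:BJ} in the pointwise form
\[
 u(t)\leq I(h)u(t-h)+hf(t),\qquad v(t)\geq I(h)v(t-h)+hg(t)\quad\text{for }t\in[h,T],
\]
subtract, and take positive parts to obtain
\[
 (u(t)-v(t))^+\leq (I(h)u(t-h)-I(h)v(t-h))^+ + h(f(t)-g(t))^+.
\]
Then I would take $\|\cdot\|_\kappa$ on both sides; the key step is to bound the first term on the right by $e^{\omega h}\|(u(t-h)-v(t-h))^+\|_\kappa$.

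For this key step, set $w:=\max(u(t-h),v(t-h))=v(t-h)+(u(t-h)-v(t-h))^+$. Monotonicity of $I(h)$ (from Assumption~\ref{ass:cher}(ii)) gives $I(h)u(t-h)\leq I(h)w$ and $I(h)v(t-h)\leq I(h)w$, so
\[
 0\leq (I(h)u(t-h)-I(h)v(t-h))^+\leq I(h)w-I(h)v(t-h).
\]
Since the right-hand side is nonnegative, its $\|\cdot\|_\kappa$-norm dominates that of the left, and by the Lipschitz estimate in Assumption~\ref{ass:cher}(iii) it is bounded by $e^{\omega h}\|w-v(t-h)\|_\kappa=e^{\omega h}\|(u(t-h)-v(t-h))^+\|_\kappa$. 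This yields the recursion
\[
 a(t)\leq e^{\omega h}a(t-h)+hb(t)\quad\text{for }t\in[h,T],
\]
where $b(t):=\|(f(t)-g(t))^+\|_\kappa$.

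Finally, for any $t\in[0,T]$ let $k:=\lfloor t/h\rfloor$ if $h>0$ (for $h=0$ or $t<h$ the claim is trivial since $a(t)\leq\sup_{s\in[0,h)}a(s)$). Iterating the recursion $k$ times lands in the window $t-kh\in[0,h)$ and gives
\[
 a(t)\leq e^{k\omega h}a(t-kh)+h\sum_{j=0}^{k-1}e^{j\omega h}b(t-jh)
 \leq e^{\omega t}\sup_{s\in[0,h)}a(s)+(hk)e^{\omega t}\sup_{s\in[h,T]}b(s),
\]
and the conclusion follows from $hk\leq t$. I do not expect any serious obstacle; the only subtle point is the $\max$-trick for controlling the positive part of $I(h)u-I(h)v$ using monotonicity together with the $e^{\omega h}$ Lipschitz bound, since $I(h)$ is only convex (not sublinear), so one cannot simply write $I(h)u-I(h)v\leq I(h)(u-v)$.
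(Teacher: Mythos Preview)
Your proof is correct and takes a genuinely different route from the paper's. You argue directly by establishing the one-step recursion $a(t)\leq e^{\omega h}a(t-h)+hb(t)$ via the max-trick and then iterating; this uses only monotonicity of $I(h)$ and the $e^{\omega h}$-Lipschitz bound, and in particular never invokes convexity. The paper instead proceeds in two stages: first it proves a clean comparison (if $u\leq v$ on $[0,h)$ and $f\leq g$ on $[h,T]$ then $u\leq v$ everywhere), where the step $I(h)u(t-h)-I(h)v(t-h)\leq 0$ is obtained from the convexity estimate $\Phi(x)-\Phi(y)\leq\tfrac12\bigl(\Phi(2(x-y)+y)-\Phi(y)\bigr)$ (Lemma~\ref{lem:lambda}) combined with monotonicity; second, it absorbs the initial and source errors by comparing $u$ against the barrier $w(t):=v(t)+e^{\omega t}(a+tb)/\kappa$, using Corollary~\ref{cor:kappa} (again convexity) to show that $I(h)$ respects the additive $c/\kappa$ term. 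Your approach is more elementary and shows that the lemma in fact holds for monotone $I(h)$ satisfying the Lipschitz bound without any convexity hypothesis; the paper's barrier argument, on the other hand, fits the pattern used later for the continuous-time comparison (Theorem~\ref{thm:comp}) and keeps the two-step ``clean comparison plus barrier'' structure uniform across the article.
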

\begin{proof}
 First, we assume that $u(t)\leq v(t)$ for all $t\in [0,h)$ and $f(t)\leq g(t)$ for all $t\in [h,T]$.
 For every $t\in [h,T]$, inequality~\eqref{eq:BJ} implies $u(t)-I(h)u(t-h)\leq v(t)-I(h)v(t-h)$.
 Furthermore, we use Lemma~\ref{lem:lambda}, the monotonicity of $I(h)$ and $t-h\in [0,h)$
 to obtain
 \begin{align*}
  u(t)-v(t) &\leq I(h)u(t-h)-I(h)v(t-h) \\
  &\leq\tfrac{1}{2}I(h)\big(2(u(t-h)-v(t-h))+v(t-h)\big)-\tfrac{1}{2}I(h)v(t-h) \\
  &\leq\tfrac{1}{2}I(h)v(t-h)-\tfrac{1}{2}I(h)v(t-h)=0
 \end{align*}
 for all $t\in [h,2h)$. By induction, one can show that $u(t)\leq v(t)$ for all $t\in [0,T]$.

 Second, for the general case, we consider the function
 \[ w(t):=v(t)+e^{\omega t}\bigg(\sup_{s\in [0,h)}\|(u(s)-v(s))^+\|_\kappa
 	+t\sup_{s\in [h, T]}\|(f(s)-g(s))^+\|_\kappa\bigg)\frac{1}{\kappa}. \]
 It holds $u(t)\leq (u(t)-v(t))^+ +v(t)\leq w(t)$ for all $t\in [0,h)$. In addition, for every $t\in [h, T]$,
 it follows from Assumption~\ref{ass:cher}(iii) and Corollary~\ref{cor:kappa} that
 \[ I(h)w(t-h)\leq I(h)v(t-h)+\frac{e^{\omega t}(a+b(t-h))}{\kappa}, \]
 where $a:=\sup_{s\in [0,h)}\|(u(s)-v(s))^+\|_\kappa$ and $b:=\sup_{s\in [h, T]}\|(f(s)-g(s))^+\|_\kappa$.
 Hence,
 \[ f(t) \leq g(t)+(f(t)-g(t))^+\leq\frac{v(t)-I(h)v(t-h)}{h}+\frac{b}{\kappa}
 	\leq\frac{w(t)-I(h)w(t-h)}{h} \]
 for all $t\in [h, T]$. It follows from the first part of the proof that $u(t)\leq w(t)$ and thus
 \[ \|(u(t)-v(t))^+\|_\kappa\leq e^{\omega t}(a+bt) \quad\mbox{for all } t\in [0,T]. \qedhere \]
\end{proof}

The following definition and comparison principle are from~\cite{BDKN22}. For every sequence
$(f_n)_{n\in\N}\subset\Ck$ with $\sup_{n\in\N}\|f_n^+\|_\kappa<\infty$ and $x\in\R^d$,
we define
 \[ \Big(\Glimsup_{n\to\infty}f_n\Big)(x)
 	:=\sup\Big\{\limsup_{n\to\infty}f_n(x_n)\colon
 	(x_n)_{n\in\N}\subset\R^d\mbox{ with } x_n\to x\Big\}\in [-\infty, \infty). \]
Moreover, for every family $(f_t)_{t\in [T_1, T_2]}\subset\Ck$ with $T_1\leq T_2$ and
$\sup_{t\in [T_1, T_2]}\|f_t^+\|_\kappa<\infty$,
\[ \Glimsup_{s\to t}f_s:=\sup\Big\{\Glimsup_{n\to\infty}f_{s_n}\colon s_n\to t\Big\}
	\quad\mbox{for all } t\in [T_1, T_2]. \]
The $\Gamma$-limit superior is always an upper semicontinuous function $f\colon\R^d\to [-\infty, \infty)$
with $\|f^+\|_\kappa<\infty$.

\begin{definition}
 The upper Lipschitz set $\L^S_+$ consists of all $f\in\Ck$ such that there
 exist $c\geq 0$ and $t_0>0$ with
 \[ \|(S(t)f-f)^+\|_\kappa\leq ct \quad\mbox{for all } t\in [0,t_0]. \]
 Moreover, for every $f\in\L^S_+$, the upper $\Gamma$-generator is defined by
 \[ \AG^+ f:=\Glimsup_{h\downarrow 0}\frac{S(h)f-f}{h}. \]
\end{definition}

\begin{theorem} \label{thm:comp}
 Let $T_2, T_1\geq 0$ with $T_1\leq T_2$ and $f\in\L^S_+$. Let $v\colon [T_1,T_2]\to\L^S_+$
 be a function with $S(T_1)f\leq v(T_1)$, $\sup_{t\in [T_1,T_2]}\|v(t)\|_\kappa<\infty$ and
 $\Glimsup_{s\to t}v(s)\leq v(t)$ for all $t\in [T_1,T_2]$. Suppose that, for every $t\in [T_1,T_2)$,
 \begin{align}
  \limsup_{h\downarrow 0}\Big\|\Big(\frac{v(t+h)-v(t)}{h}\Big)^-\Big\|_\kappa &<\infty, \label{eq:comp1} \\
  \Glimsup_{h\downarrow 0}\Big(\AG^+ v(t)-\frac{v(t+h)-v(t)}{h}\Big) &\leq 0. \label{eq:comp2}
 \end{align}
 Then, it holds $S(t)f\leq v(t)$ for all $t\in [T_1,T_2]$.
\end{theorem}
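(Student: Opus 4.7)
The approach is the classical comparison argument: perturb $v$ to obtain a strict supersolution, define the maximal time at which the strict domination holds, then rule out this time being less than $T_2$. Fix $\sigma > 0$ and consider
\[
v_\sigma(t) := v(t) + \sigma(1 + t - T_1)/\kappa.
\]
The initial inequality becomes strict, $S(T_1)f \leq v(T_1) < v_\sigma(T_1)$, and the added term contributes $\sigma/\kappa > 0$ to the right-hand side of \eqref{eq:comp2}. It will suffice to show $S(t)f \leq v_\sigma(t)$ on $[T_1, T_2]$ for every $\sigma > 0$ and then pass $\sigma \downarrow 0$; note that $v_\sigma$ still satisfies the hypotheses of the theorem (after increasing the constant in \eqref{eq:comp1} by a $\sigma$-dependent amount, harmless for the argument).

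Define
\[
t^\ast := \sup\{ t \in [T_1, T_2] : S(s)f \leq v_\sigma(s) \text{ for all } s \in [T_1, t] \}.
\]
The strict initial inequality, together with strong continuity of $(S(t))_{t \geq 0}$ and the hypothesis $f \in \L^S_+$, yields $t^\ast > T_1$. At $t^\ast$ itself the bound $S(t^\ast)f \leq v_\sigma(t^\ast)$ persists: the strong continuity gives $S(s)f \to S(t^\ast)f$ in $\|\cdot\|_\kappa$, while $\Glimsup_{s \to t^\ast} v(s) \leq v(t^\ast)$ ensures that pointwise limits superior of $v_\sigma(s_n)(x_n)$ along $s_n \uparrow t^\ast$ and $x_n \to x$ cannot exceed $v_\sigma(t^\ast)(x)$.

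Suppose for contradiction $t^\ast < T_2$. Using the semigroup property and monotonicity of $S(h)$,
\[
S(t^\ast + h)f - v_\sigma(t^\ast + h) \leq S(h) v_\sigma(t^\ast) - v_\sigma(t^\ast + h).
\]
Since $v_\sigma(t^\ast) \in \L^S_+$, the quotient $h^{-1}(S(h)v_\sigma(t^\ast) - v_\sigma(t^\ast))$ is pointwise upper-bounded along sequences by $\AG^+ v_\sigma(t^\ast)$, and hypothesis \eqref{eq:comp2} applied to $v_\sigma$ gives, in the $\Gamma$-limsup sense,
\[
\AG^+ v_\sigma(t^\ast) - \frac{v_\sigma(t^\ast + h) - v_\sigma(t^\ast)}{h} \leq -\sigma/\kappa + o(1).
\]
Combining these bounds, using \eqref{eq:comp1} to control the negative part of the right difference quotient of $v$, and exploiting the one-sided $\|\cdot\|_\kappa$-contraction from Theorem~\ref{thm:cher}(ii), one finds that $S(t^\ast + h)f \leq v_\sigma(t^\ast + h)$ for all sufficiently small $h > 0$, contradicting the maximality of $t^\ast$.

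The main obstacle will be lifting the $\Gamma$-limsup bounds, which are intrinsically pointwise and upper-semicontinuous, to a statement at the level of the $\kappa$-weighted norm that is uniform enough for the strict margin $\sigma/\kappa$ to absorb the $o(1)$ errors. This is exactly where conditions \eqref{eq:comp1} and \eqref{eq:comp2} work together: the first prevents the difference quotient $(v(t+h) - v(t))/h$ from exploding downward, while the second, sharpened by the $\sigma$-perturbation, provides the strict one-sided control that is propagated by monotonicity of $S(h)$. Carefully synchronising the two sequences along which the two $\Gamma$-limsups are realised at $t^\ast$ is the technical heart of the argument.
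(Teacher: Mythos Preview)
The paper does not prove this theorem: it is stated as a quotation from~\cite{BDKN22} (see the sentence ``The following definition and comparison principle are from~\cite{BDKN22}'' immediately before the statement). So there is no in-paper proof to compare against; your outline must stand on its own.

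Your overall architecture (strict perturbation, maximal time $t^\ast$, contradiction via the semigroup property and monotonicity) is the correct one for this kind of comparison principle. Two concrete issues remain. First, the linear perturbation $v_\sigma(t)=v(t)+\sigma(1+t-T_1)/\kappa$ does not produce a strict supersolution when $\omega>0$: Corollary~\ref{cor:kappa} gives $S(h)(g+a/\kappa)\leq S(h)g+e^{\omega h}a/\kappa$, so $\AG^+ v_\sigma(t)\leq\AG^+ v(t)+\omega\sigma(1+t-T_1)/\kappa$, and the net gain in~\eqref{eq:comp2} becomes $(\omega(1+t-T_1)-1)\sigma/\kappa$, which is nonnegative once $t\geq T_1+1/\omega-1$. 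You need an exponential weight, e.g.\ $v_\sigma(t)=v(t)+\sigma e^{(\omega+1)(t-T_1)}/\kappa$; this is exactly the shape of the barrier used later in the proof of Theorem~\ref{thm:rate}(ii).

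Second, and more seriously, you correctly identify the ``main obstacle'' --- passing from pointwise $\Gamma$-limsup control to a bound uniform enough to conclude $S(t^\ast+h)f\leq v_\sigma(t^\ast+h)$ for a \emph{single} small $h$ and \emph{all} $x$ simultaneously --- but you do not resolve it. The $\Gamma$-limsup in~\eqref{eq:comp2} only says that along any sequence $(h_n,x_n)\to(0,x)$ the limsup is $\leq 0$; this does not by itself yield an $h_0>0$ independent of $x$. The argument in~\cite{BDKN22} handles this by working on compacta and exploiting upper semicontinuity of the $\Gamma$-limsup together with the uniform one-sided bound~\eqref{eq:comp1}; as written, your final paragraph is a description of what needs to be done rather than a proof.
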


\subsection{Convolution and $\Gamma$-supersolutions}

In the sequel, we fix $\epsilon_1, \epsilon_2\in (0, \epsilon_0]$ and a probability measure $\mu$
on $\B(\R_+\times\R^d)$ with $\supp(\mu)\subset [0,\epsilon_1]\times B_{\R^d}(\epsilon_2)$.
For a locally bounded measurable function $u\colon\R_+\times\R^d\to\R$, we define
\[ (u*\mu)(t,x):=\int_{\R_+\times\R^d}u(s+t, x+y)\,\mu(\d s\times\d y)
	\quad\mbox{for all } (t,x)\in\R_+\times\R^d. \]
Moreover, for a function $u\colon\R_+\to\Ck$ and $t\geq 0$ such that there exist $c\geq 0$
and $h_0>0$ with $\|(u(t+h)-u(t))^+\|_\kappa\leq ch$ for all $h\in [0,h_0]$, we define
\[ \partial_t^\Gamma u(t):=\Glimsup_{h\downarrow 0}\frac{u(t+h)-u(t)}{h}. \]
For functions $u\colon\R_+\times\R^d\to \R$, we subsequently write $u(t):=u(t, \cdot)$.

\begin{lemma} \label{lem:super}
 Let $r\geq 0$, $f\in\Lipb(r)$ and $u(t):=S(t)f$ for all $t\geq 0$. Suppose that,
 for every $t\geq 0$, there exist $c\geq 0$ and $h_0>0$ such that the function $v:=u*\mu$ satisfies
 $\|(v(t+h)-v(t))^+\|_\kappa\leq ch$ for all $h\in (0,h_0]$. Then, it holds $v(t)\in\L^S_+$ and
 \[ \partial_t^\Gamma v(t)\geq\AG^+ v(t)-\frac{Lre^{\omega(t+\epsilon_1)}\epsilon_2}{\kappa}
 	\quad\mbox{for all } t\geq 0. \]
\end{lemma}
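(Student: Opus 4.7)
The plan is to compare $S(h)v(t)$ with $v(t+h)$ by pushing $S(h)$ inside the defining integral of $v(t)=u*\mu$, trading the interchange for a spatial translation error supplied by Theorem~\ref{thm:cher}(v). Using $u(s+t)=S(s+t)f$, I would first rewrite
\[
v(t)(x)=\int_{\R_+\times\R^d}\big(\tau_y S(s+t)f\big)(x)\,\mu(\d s,\d y).
\]
Since $\mu$ is a probability measure and $S(h)\colon\Ck\to\Ck$ is convex and monotone, a Jensen-type inequality then yields the pointwise bound
\[
S(h)v(t)(x)\leq\int_{\R_+\times\R^d}S(h)\big[\tau_y S(s+t)f\big](x)\,\mu(\d s,\d y).
\]

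Next I would bound the integrand. By Theorem~\ref{thm:cher}(vi), $S(s+t)f\in\Lipb(e^{\omega(s+t)}r)$, and since $|y|\leq\epsilon_2\leq\epsilon_0$ on $\supp\mu$, Theorem~\ref{thm:cher}(v) gives
\[
S(h)\big[\tau_y S(s+t)f\big](x)\leq\tau_y\big[S(h)S(s+t)f\big](x)+\frac{Le^{\omega(s+t)}rhe^{2\omega h}|y|}{\kappa(x)}.
\]
The semigroup property collapses $S(h)S(s+t)f$ to $u(h+s+t)$, so $\tau_y[S(h)S(s+t)f](x)=u(h+s+t,x+y)$, and integrating against $\mu$ while using $s\leq\epsilon_1$ and $|y|\leq\epsilon_2$ on $\supp\mu$ produces
\[
S(h)v(t)(x)\leq v(t+h)(x)+\frac{Lrhe^{\omega(t+\epsilon_1)}e^{2\omega h}\epsilon_2}{\kappa(x)}.
\]

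From here the rest is routine. Subtracting $v(t)(x)$, taking positive parts, applying $\|\cdot\|_\kappa$, and combining with the hypothesis $\|(v(t+h)-v(t))^+\|_\kappa\leq ch$ gives $\|(S(h)v(t)-v(t))^+\|_\kappa\leq c'h$ for small $h$, which is exactly $v(t)\in\L^S_+$. Dividing the pointwise inequality by $h$, noting that the additive error term converges uniformly on compact sets to $Lre^{\omega(t+\epsilon_1)}\epsilon_2/\kappa$ as $h\downarrow 0$, and taking the $\Gamma$-limsup then yields
\[
\AG^+ v(t)\leq\partial_t^\Gamma v(t)+\frac{Lre^{\omega(t+\epsilon_1)}\epsilon_2}{\kappa},
\]
which rearranges to the claim.

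The main obstacle I expect is the Jensen-type interchange: commuting the nonlinear convex operator $S(h)$ with an integral against a general probability measure $\mu$ is not automatic. It is a standard tool in this setting, however, provable either via the dual variational representation of convex monotone operators on $\Ck$ as suprema of affine monotone maps (which commute with integration against $\mu$), or by approximating $\mu$ with finite convex combinations of Dirac measures supported in $[0,\epsilon_1]\times B_{\R^d}(\epsilon_2)$ and invoking the mixed-topology continuity of $S(h)$.
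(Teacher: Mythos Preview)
Your proposal is correct and follows essentially the same route as the paper's proof: both push $S(h)$ inside the convolution via a Jensen-type inequality for convex monotone operators (the paper records this as Lemma~\ref{lem:Jensen}, proved exactly by the dual representation you sketch), then invoke Theorem~\ref{thm:cher}(v)--(vi) to trade the spatial translation for the error $Lre^{\omega(t+\epsilon_1)}e^{2\omega h}\epsilon_2/\kappa$, collapse $S(h)S(s+t)f$ via the semigroup property, and read off both $v(t)\in\L^S_+$ and the $\Gamma$-limsup inequality. Your error constant $e^{\omega(s+t)}e^{2\omega h}\leq e^{\omega(t+\epsilon_1+2h)}$ is exactly the paper's.
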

\begin{proof}
 For every $h, t\geq 0$ and $x\in\R^d$, Lemma~\ref{lem:Jensen} and Theorem~\ref{thm:cher}(vi)
 imply
 \begin{align*}
  \big(S(h)v(t, \,\cdot\,)\big)(x)
  &\leq\int_{\R_+\times\R^d}\big(S(h)u(s+t, \,\cdot\,+y)\big)(x)\,\mu(\d s\times\d y) \\
  &\leq\int_{\R_+\times\R^d}\big(S(h)u(s+t, \,\cdot\,)\big)(x+y)\,\mu(\d s\times\d y)
  	+\frac{Lrhe^{\omega(t+\epsilon_1+2h)}\epsilon_2}{\kappa(x)} \\
  &=\int_{\R_+\times\R^d}u(s+t+h, x+y)\,\mu(\d s\times\d y)
  	+\frac{Lrhe^{\omega(t+\epsilon_1+2h)}\epsilon_2}{\kappa(x)} \\
  &=v(t+h, x)+\frac{Lrhe^{\omega(t+\epsilon_1+2h)}\epsilon_2}{\kappa(x)}.
 \end{align*}
 Moreover, for every $t\geq 0$, there exist $c\geq 0$ and $h_0>0$ with $\|(v(t+h)-v(t))^+\|_\kappa\leq ch$
 for all $h\in (0,h_0]$. The previous estimate yields
 $\|(S(h)v(t)-v(t))^+\|_\kappa\leq ch+Lrhe^{\omega(t+\epsilon_1)}\epsilon_2$ for all $h\in (0,h_0]$
 and
 \[ \partial_t^\Gamma v(t)\geq\AG^+ v(t)-\frac{Lre^{\omega(t+\epsilon_1)}\epsilon_2}{\kappa}. \qedhere \]
\end{proof}

The conditions of the previous lemma are clearly satisfied if $\mu$ has a smooth density
and thus $v\in\Cbi([0,T]\times\R^d)$ for all $T\geq 0$. Then, it holds $v(t)\in\Cbi\subset D(A)$ and
\[ \partial_t v(t)\geq Av(t)-\frac{Lre^{\omega(t+\epsilon_1)}\epsilon_2}{\kappa}
 	\quad\mbox{for all } t\geq 0. \]
If $(S(t))_{t\geq 0}$ translation-invariant, i.e., $S(t)(\tau_x f)=\tau_x S(t)f$ for all $t\geq 0$,
$f\in\Ck$ and $x\in\R^d$, we further obtain
\[  \partial_t^\Gamma v(t)\geq\AG^+ v(t) \quad\mbox{for all } t\geq 0 \]
which means that the function $v$ is a $\Gamma$-supersolution. The following result is a
discrete version of Lemma~\ref{lem:super}.

\begin{lemma} \label{lem:super2}
 Let $n\in\N$, $r\geq 0$, $f\in\Lipb(r)$ and $u_n(t):=I(\pi_n^t)f$ for all $t\geq 0$. Then,
 the function $v_n:=u_n*\mu$ satisfies
 \[ \frac{v_n(t)-I(h_n)v_n(t-h_n)}{h_n}\geq -\frac{Lre^{\omega (t+\epsilon_1)}\epsilon_2}{\kappa}
 	\quad\mbox{for all } t\geq h_n. \]
\end{lemma}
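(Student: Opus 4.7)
\emph{Proof plan.} The argument is a direct discrete analogue of Lemma~\ref{lem:super}, and in fact slightly cleaner because the composition $I(h_n)\circ I(\pi_n^{\,\cdot})$ behaves exactly and does not leave a semigroup tail. Fix $t\geq h_n$ and $x\in\R^d$. My plan has three steps: (a) push $I(h_n)$ inside the integral defining $v_n(t-h_n)$ via Jensen's inequality for the convex operator $I(h_n)$; (b) transfer the space shift $\tau_y$ across $I(h_n)$ using Assumption~\ref{ass:cher}(iv), picking up an error proportional to $|y|$ and the Lipschitz constant of $u_n(s+t-h_n)$; (c) use a combinatorial identity on the partitions $\pi_n^{\,\cdot}$ to recognize $I(h_n)u_n(s+t-h_n)$ as $u_n(s+t)$, then integrate against $\mu$ to recover $v_n(t)$ and collect the error.

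For step~(a), Assumption~\ref{ass:cher}(ii) ensures that $I(h_n)$ is convex, so Lemma~\ref{lem:Jensen} gives
$$I(h_n)v_n(t-h_n)(x)\leq\int_{\R_+\times\R^d}I(h_n)\big[\tau_y u_n(s+t-h_n)\big](x)\,\mu(\d s\times\d y).$$
For step~(b), note that $\supp(\mu)\subset[0,\epsilon_1]\times B_{\R^d}(\epsilon_2)$ with $\epsilon_1,\epsilon_2\in(0,\epsilon_0]$, and by iterating Assumption~\ref{ass:cher}(vii) the function $u_n(s+t-h_n)=I(h_n)^{k_n^{s+t-h_n}}f$ lies in $\Lipb\big(re^{\omega k_n^{s+t-h_n}h_n}\big)\subset\Lipb\big(re^{\omega(t+\epsilon_1)}\big)$. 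Assumption~\ref{ass:cher}(iv) applied to this function therefore yields the pointwise bound
$$I(h_n)\big[\tau_y u_n(s+t-h_n)\big](x)\leq\tau_y I(h_n)u_n(s+t-h_n)(x)+\frac{Lre^{\omega(t+\epsilon_1)}h_n|y|}{\kappa(x)}.$$

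The key combinatorial observation for step~(c) is $k_n^{s+t}=k_n^{s+t-h_n}+1$ for every $s\geq 0$, which follows from $\lfloor a+1\rfloor=\lfloor a\rfloor+1$ applied to $a:=(s+t-h_n)/h_n$. Hence $I(h_n)u_n(s+t-h_n)=I(h_n)^{k_n^{s+t}}f=u_n(s+t)$, so $\tau_y I(h_n)u_n(s+t-h_n)(x)=u_n(s+t,x+y)$. Integrating against $\mu$ and using $|y|\leq\epsilon_2$ yields
$$I(h_n)v_n(t-h_n)(x)\leq v_n(t,x)+\frac{Lre^{\omega(t+\epsilon_1)}h_n\epsilon_2}{\kappa(x)},$$
and rearranging and dividing by $h_n$ is the desired inequality. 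I expect no real obstacle beyond the bookkeeping in step~(c) and the Lipschitz bound from step~(b); the argument is shorter than its continuous counterpart precisely because the discrete composition identity is exact, so no $e^{\omega h_n}$ factor appears.
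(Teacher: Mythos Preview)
Your proof is correct and follows essentially the same approach as the paper: apply Lemma~\ref{lem:Jensen} to push $I(h_n)$ inside the convolution, use Assumption~\ref{ass:cher}(iv) together with the Lipschitz bound from Assumption~\ref{ass:cher}(vii) to commute the spatial shift, and then invoke the identity $I(h_n)u_n(\cdot)=u_n(\cdot+h_n)$ to recover $v_n(t)$. The only cosmetic difference is that the paper estimates $I(h_n)v_n(t)\leq v_n(t+h_n)+\ldots$ and implicitly shifts $t\mapsto t-h_n$, whereas you work with $t-h_n$ from the start and spell out the index identity $k_n^{s+t}=k_n^{s+t-h_n}+1$ explicitly.
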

\begin{proof}
 For every $t\geq h_n$ and $x\in\R^d$, it follows from Lemma~\ref{lem:Jensen}, Assumption~\ref{ass:cher}(iv)
 and Assumption~\ref{ass:cher}(vii) that
 \begin{align*}
  \big(I(h_n)v_n(t, \,\cdot\,)\big)(x)
  &\leq\int_{\R_+\times\R^d}\big(I(h_n)u_n(s+t, \,\cdot\,+y)\big)(x)\,\mu(\d s\times\d y) \\
  &\leq\int_{\R_+\times\R^d}\big(I(h_n)u_n(s+t, \,\cdot\,)\big)(x+y)\,\mu(\d s\times\d y)
  	+\frac{Lrh_ne^{\omega(t+\epsilon_1)}\epsilon_2}{\kappa(x)} \\
  &=\int_{\R_+\times\R^d}u_n(s+t+h_n, x+y)\,\mu(\d s\times\d y)
   +\frac{Lrh_ne^{\omega(t+\epsilon_1)}\epsilon_2}{\kappa(x)} \\
  &=v_n(t+h_n,x)+\frac{Lrh_ne^{\omega(t+\epsilon_1)}\epsilon_2}{\kappa(x)}. \qedhere
 \end{align*}
\end{proof}

\subsection{Lower bound}

\begin{proof}[Proof of Theorem~\ref{thm:rate}(i)]
 Let $r\geq 0$, $f\in\Lipb(r)$ and define $u(t):=S(t)f$ for all $t\geq 0$. 
 Furthermore, we fix $\delta=(\delta_1,\delta_2)$ with $\delta_1, \delta_2>0$ and 
 $\epsilon=(\epsilon_1,\epsilon_2)$ with $\epsilon_1, \epsilon_2\in (0,\epsilon_0]$.
 For every $n\in\N$ and $t\geq h_n$, Assumption~\ref{ass:rate}(ii) and Lemma~\ref{lem:super}
 imply
 \begin{align*}
  \frac{u^{\epsilon}(t)-I(h_n)u^{\epsilon}(t-h_n)}{h_n}
  &\geq\partial_t u^{\epsilon}(t)-Au^{\epsilon}(t)-\frac{\rho_2(\epsilon_1, \epsilon_2, h_n, r,t)}{\kappa} \\
  &\geq -\frac{Lre^{\omega(t+\epsilon_1)}\epsilon_2+\rho_2(\epsilon_1, \epsilon_2, h_n, r,t)}{\kappa}.
 \end{align*}
 Since the function $u_n(t):=I(\pi_n^t)f$ satisfies $u_n(t)=I(h_n)u_n(t-h_n)$, Lemma~\ref{lem:BJ} yields
 \begin{align}
  &\|(u_n(t)-u^{\epsilon}(t))^+\|_\kappa \nonumber \\
  &\leq e^{\omega t}\bigg(\sup_{s\in [0, h_n)}\|(u_n(s)-u^{\epsilon}(s))^+\|_\kappa
  +\big(Lre^{\omega(t+\epsilon_1)}\epsilon_2+\rho_2(\epsilon_1, \epsilon_2, h_n, r,t)\big)t\bigg) \label{eq:lb1}
 \end{align}
 for all $n\in\N$ and $t\geq h_n$. Furthermore, we use the identity $u_n(s)=f$ for all $s\in [0,h_n)$ 
 and Theorem~\ref{thm:time} to obtain
 \begin{equation} \label{eq:lb2}
  \sup_{s\in [0,h_n)}\|u_n(s)-u(s)\|_\kappa
  \leq e^{\omega h_n}\big(2r\delta_1+\rho_1\big(r, b_{0,1}r\delta_1^{-1}\big)h_n\big)
 \end{equation}
 for all $n\in\N$ and $s\in [0,h_n)$. Theorem~\ref{thm:cher}(vi) and Theorem~\ref{thm:time} imply
 \begin{align*}
  &|u(t,x)-u^{\epsilon}(t,x)|\kappa(x) \\
  &\leq\int_{\R_+\times\R^d}|u(t,x)-u(s+t, x+y)|\kappa(x)\eta^{\epsilon}(s,y)\,\d s\,\d y \\
  &\leq\int_{\R_+\times\R^d}|u(t,x)-u(s+t, x)|\kappa(x)\eta^{\epsilon}(s,y)\,\d s\, \d y \\
  &\quad\; +\int_{\R_+\times\R^d}|u(s+t, x)-u(s+t, x+y)|\kappa(x)\eta^{\epsilon}(s,y)\,\d s\,\d y \\
  &\leq e^{\omega(t+\epsilon_1)}\big(2r\delta_2+\rho_1\big(r, b_{0,1}r\delta_2^{-1}\big)\epsilon_1
  	+r\epsilon_2\big)
 \end{align*}
 for all $t\geq 0$ and $x\in\R^d$. Hence, for every $t\geq 0$,
 \begin{equation} \label{eq:lb3}
  \|u(t)-u^\epsilon(t)\|_\kappa
  \leq e^{\omega(t+\epsilon_1)}\big(2r\delta_2+\rho_1\big(r, b_{0,1}r\delta_2^{-1}\big)\epsilon_1
  	+r\epsilon_2\big).
 \end{equation}
 For every $n\in\N$ and $t\geq h_n$, combining the inequalities~\eqref{eq:lb1}--\eqref{eq:lb3} yields
 \begin{align*}
  \|(u_n(t)-u(t))^+\|_\kappa
  &\leq\|(u_n(t)-u^\epsilon(t))^+\|_\kappa+\|u^\epsilon(t)-u(t)\|_\kappa \\
  &\leq e^{\omega (t+h_n)}\big(2r\delta_1+\rho_1\big(r, b_{0,1}r \delta_1^{-1}\big)h_n\big) \\
  &\quad\; +e^{\omega (t+\epsilon_1)}(1+e^{\omega h_n})
   \big(2r\delta_2+\rho_1\big(r, b_{0,1}r\delta_2^{-1}\big)\epsilon_1+r\epsilon_2\big) \\
  &\quad\; +e^{\omega t}\big(Lre^{\omega(t+\epsilon_1)}\epsilon_2+\rho_2(\epsilon_1, \epsilon_2, h_n, r,t)\big)t.
 \end{align*}
 Moreover, for every $t\in [0,h_n]$, the claim follows directly from inequality \eqref{eq:lb2}.
\end{proof}

\subsection{Upper bound}

\begin{proof}[Proof of Theorem~\ref{thm:rate}(ii)]
 Let $r\geq 0$, $f\in\Lipb(r)$, $n\in\N$ and define $u_n(t):=I(\pi_n^t)f$ for all $t\geq 0$. 
 We fix $\delta=(\delta_1,\delta_2)$ with $\delta_1, \delta_2>0$ and $\epsilon=(\epsilon_1,\epsilon_2)$ 
 with $\epsilon_1, \epsilon_2\in (0, \epsilon_0]$. 
 In the sequel, we show that
 \begin{equation} \label{eq:ub1}
  S(t)f\leq v(t) \quad\mbox{for all } t\geq 0,
 \end{equation}
 where the function
 $v\colon\R_+\to\Ck,\; t\mapsto u_n^\epsilon(t)+e^{\omega t}\big(\frac{a+tb_t}{\kappa}\big)$
 depends on
 \begin{align*}
  a &:= e^{\omega h_n}\big(2r\delta_1+\rho_1\big(r, b_{0,1}r\delta_1^{-1}\big)h_n\big) +e^{\omega(h_n+\epsilon_1)}
  \big(2r\delta_2+\rho_1\big(r,b_{0,1}r\delta_2^{-1}\big)\epsilon_1+r\epsilon_2\big), \\
  b_t&:=Lre^{\omega\epsilon_1}\epsilon_2+\rho_3(\epsilon_1, \epsilon_2, h_n, r,t).
 \end{align*}
 For every $t\geq 0$ and $x\in\R^d$, Theorem~\ref{thm:time} implies
 \begin{align*}
  |u_n(t,x)-u_n^\epsilon(t,x)|\kappa(x) 
  &\leq\int_{\R_+\times\R^d}|u_n(t,x)-u_n(s+t, x+y)|\kappa(x)\eta^\epsilon(s,y)\,\d s\,\d y \\
  &\leq\int_{\R_+\times\R^d}|u_n(t,x)-u_n(s+t, x)|\kappa(x)\eta^\epsilon(s,y)\,\d s\, \d y \\
  &\quad\; +\int_{\R_+\times\R^d}|u_n(s+t, x)-u_n(s+t, x+y)|\kappa(x)\eta^\epsilon(s,y)\,\d s\,\d y \\
  &\leq e^{\omega(t+\epsilon_1)}\big(2r\delta_2+\rho_1\big(r, b_{0,1}r\delta_2^{-1}\big)\epsilon_1
  	+r\epsilon_2\big).
 \end{align*}
 Hence, for every $t\geq 0$,
 \begin{equation} \label{eq:ub2}
  \|u_n(t)-u_n^\epsilon(t)\|_\kappa
  \leq e^{\omega(t+\epsilon_1)}\big(2r\delta_2+\rho_1\big(r, b_{0,1}r\delta_2^{-1}\big)\epsilon_1
  	+r\epsilon_2\big).
 \end{equation}
 We apply Theorem~\ref{thm:time} and use $u_n(t)=f$ to obtain
 \begin{align*}
  \|S(t)f-u_n^\epsilon(t)\|_\kappa
  &\leq\|S(t)f-f\|_\kappa+\|f-u_n(t)\|_\kappa+\|u_n(t)-u_n^\epsilon(t)\|_\kappa \\
  &\leq e^{\omega h_n}\big(2r\delta_1+\rho_1\big(r,b_{0,1}r \delta_1^{-1}\big)h_n\big) \\
  &\quad\; +e^{\omega(h_n+\epsilon_1)}\big(2r\delta_2+\rho_1\big(r, b_{0,1}r\delta_2^{-1}\big)\epsilon_1
  +r\epsilon_2\big)
 \end{align*}
 and thus $S(t)f\leq v(t)$ for all $t\in [0, h_n)$. In addition, the continuity of the
 mappings $t\mapsto S(t)f$ and $t\mapsto u_n^\epsilon(t)$ and the monotonicity
 of the mapping $t\mapsto\rho_3(\epsilon_1, \epsilon_2, h_n, r,t)$ yields $S(h_n)f\leq v(h_n)$.
 In the sequel, we fix $T\geq h_n$ and show that
 \[ \tilde{v}\colon [h_n, T]\to\Ck,\;
 	t\mapsto u_n^\epsilon(t)+e^{\omega t}\Big(\frac{a+tb_T}{\kappa}\Big) \]
 satisfies the conditions from Theorem~\ref{thm:comp}. It holds $S(h_n)f\leq v(h_n)\leq\tilde{v}(h_n)$
 since the function $t\mapsto\rho_3(\epsilon_1, \epsilon_2,h_n,r,t)$ is non-decreasing.
 For every $t\in [h_n,T]$, we use $\Cbi\subset\L^S_+$ and Corollary~\ref{cor:kappa} to choose
 $c\geq 0$ and $h_0>0$ with
 \[ \|(S(h)\tilde{v}(t)-\tilde{v}(t))^+\|_\kappa
 	\leq\|(S(h)u_n^\epsilon(t)-u_n^\epsilon(t))^+\|_\kappa+(e^{\omega h}-1)e^{\omega t}(a+tb_T) \leq ch \]
 for all $h\in [0,h_0]$ which shows that $\tilde{v}(t)\in\L^S_+$. Furthermore, equation~\eqref{eq:kappa}
 and Theorem~\eqref{thm:cher}(ii) guarantee that
 \[ \|\tilde{v}(t)\|_\kappa\leq e^{\omega (T+\epsilon_1)}c_\kappa\|f\|_\kappa+e^{\omega T}(a+Tb_T)
 	\quad\mbox{for all } t\in [h_n, T]. \]
 Since $u_n^\epsilon|_{[h_n, T]}\in\Cbi([h_n,T]\times\R^d)$, there exists $c\geq 0$ with
 \[ \|\tilde{v}(s)-\tilde{v}(t)\|_\kappa
 	\leq c|s-t| \quad\mbox{for all } s,t\in [h_n,T]. \]
 This shows that the function $\tilde{v}\colon [h_n,T]\to\Ck$ is Lipschitz continuous and therefore satisfies
 inequality~\eqref{eq:comp1}. It remains to verify inequality~\eqref{eq:comp2}. For every $t\in [h_n,T]$
 and $h>0$, Corollary~\ref{cor:kappa} implies
 \[ \frac{S(h)\tilde{v}(t)-\tilde{v}(t)}{h}
 	\leq\frac{S(h)u_n^\epsilon(t)-u_n^\epsilon(t)}{h}
 	+\Big(\frac{e^{\omega h}-1}{h}\Big)\frac{e^{\omega t}(a+tb_T)}{\kappa}. \]
 Since $u_n^\epsilon(t)\in\Cbi\subset D(A)$, the right-hand side converges to
 \[ Au_n^\epsilon(t)+\omega e^{\omega t}\Big(\frac{a+tb_T}{\kappa}\Big) \quad\mbox{as } h\to 0. \]
 Moreover, for every $t\in [h_n, T]$, we use Assumption~\ref{ass:rate}(iii), Lemma~\ref{lem:super2},
 the equation
 \[ \partial_t \tilde{v}(t)
 	=\partial_t u_n^\epsilon(t)+\omega e^{\omega t}\Big(\frac{a+tb_T}{\kappa}\Big)
 	+\frac{e^{\omega t}b_T}{\kappa} \]
 and the monotonicity of the function $t\mapsto\rho_3(\epsilon_1,\epsilon_2,h_n, r,t)$ to obtain
 \begin{align*}
  &Au_n^\epsilon(t)+\omega e^{\omega t}\Big(\frac{a+tb_T}{\kappa}\Big) \\
  &=\partial_t u_n^\epsilon(t)+\omega e^{\omega t}\Big(\frac{a+tb_T}{\kappa}\Big)
   +Au_n^\epsilon(t)-\partial_t u_n^\epsilon(t) \\
  &\leq\partial_t \tilde{v}(t)-\frac{e^{\omega t}b_T}{\kappa}
   -\frac{u_n^\epsilon(t)-I(h_n)u_n^\epsilon(t-h_n)}{h_n}
   +\frac{\rho_3(\epsilon_1,\epsilon_2,h_n, r,t)}{\kappa} \\
  &\leq\partial_t \tilde{v}(t)-\frac{e^{\omega t}b_T}{\kappa}
   +\frac{Lre^{\omega (t+\epsilon_1)}\epsilon_2+\rho_3(\epsilon_1,\epsilon_2,h_n, r,t)}{\kappa}
   \leq\partial_t \tilde{v}(t).
 \end{align*}
 This shows that $\partial_t \tilde{v}(t)\geq A\tilde{v}(t)$ for all $t\in [h_n, T]$ and thus
 inequality~\eqref{eq:comp2} is satisfied. Theorem~\ref{thm:comp} yields $S(T)f\leq\tilde{v}(T)=v(T)$
 showing that inequality~\eqref{eq:ub1} is valid. Hence, for every $t\geq 0$, it follows from
 inequality~\eqref{eq:ub2} that
 \begin{align*}
  \|(S(t)f-u_n(t))^+\|_\kappa &\leq\|(S(t)f-v(t))^+\|_\kappa+\|v(t)-u_n(t)\|_\kappa \\
  &\leq\|u_n^\epsilon(t)-u_n(t)\|_\kappa+e^{\omega t}(a+tb_t) \\
  &\leq e^{\omega (t+h_n)}\big(2r\delta_1+\rho_1\big(r, b_{0,1}r\delta_1^{-1}\big)h_n\big) \\
  &\quad\; + e^{\omega(t+\epsilon_1)}\big(1+e^{\omega h_n}\big)
   \big(2r\delta_2+\rho_1\big(r, b_{0,1}r\delta_2^{-1}\big)\epsilon_1+r\epsilon_2\big) \\
  &\quad\; +e^{\omega t}\big(Lre^{\omega\epsilon_1}\epsilon_2+\rho_3(\epsilon_1, \epsilon_2, h_n, r,t)\big)t.
  \qedhere
 \end{align*}
\end{proof}

\subsection{Proof of Theorem~\ref{thm:rate2}}

\begin{proof}[Proof of Theorem~\ref{thm:rate2}]
 Let $r\geq 1$, $n\in\N$ and $h:=h_n$. First, by choosing $\delta_1:=h^{\gamma_1}$ 
 with $\gamma_1\geq 0$, Assumption~\ref{ass:rate2}(i)	implies
 \[ 2r\delta_1+\rho_1\big(r, b_{0,1}r\delta_1^{-1}\big)h
 	\leq 2rh^{\gamma_1}+\big(a_1(r)+a_2 b_{0,1}^p r^p h^{-p\gamma_1}\big)h. \]
 In order to minimize the right-hand side, we want to maximize $\gamma_1$ under the constraint
 $\gamma_1\leq 1-(i-1)p\gamma_1$ for $i=1,2$. Hence, we choose $ \gamma_1:=\frac{1}{1+p}$ 
 and obtain
 \begin{equation} \label{eq:delta1}
  2r\delta_1+\rho_1\big(r, b_{0,1}r\delta_1^{-1}\big)h
  \leq\big(2r+a_1(r)+a_2b_{0,1}^p r^p\big)h^\frac{1}{1+p}.
 \end{equation}
 Second, by choosing $\delta_2:=\epsilon_2$,  
 it follows from Assumption~\ref{ass:rate2}(i) and $\epsilon_1=\epsilon_2^{1+p}$ that 
 \begin{align}
  2r\delta_2+\rho_1\big(r, b_{0,1}r\delta_2^{-1}\big)\epsilon_1+r\epsilon_2 
  &\leq 2r\delta_2+\big(a_1(r)+a_2b_{0,1}^pr^p\delta_2^{-p}\big)\epsilon_2^{1+p}+r\epsilon_2 \nonumber \\
  &\leq\big(3r+a_1(r)+a_2b_{0,1}^pr^{p} \big)\epsilon_2 \label{eq:delta2}
 \end{align}
 Third, we choose $\epsilon_2:=h^{\gamma_2}\in (0,\epsilon_0]$ with $\gamma_2\geq 0$ 
 and use Assumption~\ref{ass:rate2}(ii) to obtain
 \[ Lre^{\omega(t+\epsilon_1)}\epsilon_2+\rho_2(\epsilon_1,\epsilon_2,h,r,t)
 	\leq Lre^{\omega(t+\epsilon_1)}h^{\gamma_2}
 	+\sum_{i=1}^{N^-}\theta_i^-(r,t)h^{\alpha_i^-}h^{-\gamma_2\beta_i^-}. \]
 In order to minimize the right-hand side, we want to maximize $\gamma_2$ under the constraint
 $\gamma_2\leq\alpha_i^--\gamma_2\beta_i^-$ for $i=1,\ldots,N^-$. Hence, we choose 
 \[ \gamma_2:=\min\bigg\{\frac{\alpha_1^-}{1+\beta_1^-}, \ldots,\frac{\alpha_{N^-}^-}{1+\beta_{N^-}^-}\bigg\} \] 
 and obtain
 \begin{equation} \label{eq:epsilon2}
  Lre^{\omega(t+\epsilon_1)}\epsilon_2+\rho_2(\epsilon_1,\epsilon_2,h,r,t)
  \leq \Big(Lre^{\omega(t+\epsilon_1)}+\sum_{i=1}^{N^-}\theta^-_i(r,t)\Big)h^{\gamma_2}.
 \end{equation}
 Combing inequalities~\eqref{eq:delta1}--\eqref{eq:epsilon2} leads to the choice 
 $\delta_1=\delta_2=\epsilon_2=h^{\gamma^-}$ with
 \[ \gamma^-:=\min\bigg\{\frac{1}{1+p},\frac{\alpha_1^-}{1+\beta_1^-}, \ldots,\frac{\alpha_{N^-}^-}{1+\beta_{N^-}^-}\bigg\} \]
 and $\epsilon_1=\epsilon_2^{1+p}=h^{(1+p)\gamma^-}\geq h$.
 Hence, for every $r\geq 1$, $t\geq 0$, $f\in\Lipb(r)$ and $n\in\N$, 
 it follows from Theorem~\ref{thm:rate}(i) that
 \[ \|(S(t)f-I(\pi_n^t)f)^-\|_\kappa\leq c_{r,t}^-h_n^{\gamma^-}, \]
 where $c_{r,t}^-$ is given as in the statement.	Similarly, one can prove the upper bound.
\end{proof}

\subsection{Proof of Theorem~\ref{thm:rate3}}

We briefly explain how the result for bounded functions can be transferred
to arbitrary Lipschitz continuous functions. Apart from modifying the assumptions,
the arguments do not change.

\begin{proof}[Proof of Theorem~\ref{thm:rate3}]
 Assumption~\ref{ass:rate3}(iv) ensures that Theorem~\ref{thm:time} remains valid
 with $\Lipb(r)$ instead $\Lip(r)$. Regarding Theorem~\ref{thm:cher}(v), the same follows
 from Assumption~\ref{ass:rate3}(i) and~(iii) and thus Lemma~\ref{lem:super} and
 Lemma~\ref{lem:super2} can be applied for arbitrary Lipschitz continuous functions
 as well.\ We now focus on the proof of Theorem~\ref{thm:rate}(i). Since the mapping 
 $t\mapsto u^\epsilon(t)$ is differentiable with
 \begin{equation} \label{eq:unbounded}
  \|\partial_t u^\epsilon(t)\|_\kappa
  \leq c_\kappa\|u(t)\|_\kappa\sup_{(s,x)\in [0,\epsilon_1]\times B(\epsilon_2)}|\partial_s \eta^\epsilon(s,x)|
  \quad\mbox{for all } t\geq 0
 \end{equation}
 and due to Assumption~\ref{ass:rate3}(v), we can apply Lemma~\ref{lem:super}. 
 The rest of the proof does not change because Theorem~\ref{thm:time},
 Lemma~\ref{lem:BJ} and Theorem~\ref{thm:cher}(v) are valid for unbounded functions.\
 Regarding the proof of Theorem~\ref{thm:rate}(ii), equation~\eqref{eq:unbounded} guarantees
 that~$\tilde{v}$ is locally Lipschitz continuous in time. Since $u_n^\epsilon(t)\in\Lip\cap\Ci$ 
 has bounded partial derivatives of any order, Assumption~\ref{ass:rate}(ii) and 
 Theorem~\ref{thm:cher}(i) imply $u_n^\epsilon(t)\in D(A)$. Furthermore, it follows inductively 
 from Assumption~\ref{ass:rate}(iii) that $u_n(t)\in\Lip(e^{\omega t})$ and therefore 
 $u_n^\epsilon(t)\in\Lip(e^{\omega (t+\epsilon_1)})$ which ensures that Lemma~\ref{lem:super2} 
 can be applied.
\end{proof}

\section{Examples}
\label{sec:examples}

\subsection{Nisio semigroups}

Let $(S_\lambda)_{\lambda\in\Lambda}$ be a family of strongly continuous linear semigroups 
$(S_\lambda(t))_{t\geq 0}$ on $\Ck$ with generators $(A_\lambda)_{\lambda\in\Lambda}$.\ 
The following conditions can be verified for suitably bounded families of L\'evy processes, 
Ornstein--Uhlenbeck processes and geometric Brownian motions, see~\cite{NR21, DKN20, BK22+}.
This remains valid if one replaces $\Lipb$ by $\Lip$ to be in the framework of Subsection~\ref{sec:unbounded}.

\begin{assumption} \label{ass:nisio}
 Suppose that the conditions~(i)--(v) and either (vi) or (vi') from the following list are satisfied.
 \begin{itemize}
  \item[(i)] There exists $\omega\geq 0$ with $\|S_\lambda(t)f\|_\kappa\leq e^{\omega t}\|f\|_\kappa$
   for all $t\geq 0$, $f\in\Ck$ and $\lambda\in\Lambda$.
  \item[(ii)] There exist $t_0>0$, $\epsilon_0\in (0,1]$ and $L\geq 0$ with
   \[ \|S_\lambda(t)(\tau_x f)-\tau_x S_\lambda(t)f\|_\kappa\leq Lrt|x| \]
   for all $t\in [0,t_0]$, $x\in B_{\R^d}(\epsilon_0)$, $r\geq 0$, $f\in\Lipb(r)$ and
   $\lambda\in\Lambda$.
  \item[(iii)] It holds $S_\lambda(t)\colon\Lipb(r)\to\Lipb(e^{\omega t}r)$ for all $r,t\geq 0$
   and $\lambda\in\Lambda$.
  \item[(iv)] It holds $\Cbi\subset\bigcap_{\lambda\in\Lambda}D(A_\lambda)$.
   For every $f\in\Cbi$ and $K\Subset\R^d$,
   \[ \lim_{h\downarrow 0}\sup_{\lambda\in\Lambda}
   	\left\|\frac{S_\lambda(h)f-f}{h}-A_\lambda f\right\|_{\infty, K}=0. \]
  \item[(v)] There exist $v_1,v_2,w_1,w_2,w_3\ge 0$ with
   \[ \|A_\lambda f\|_\kappa\leq\sum_{i=1}^2 v_i d^{-\frac{i}{2}}\|D^i f\|_\infty
   	\quad\mbox{and}\quad
   	A_\lambda f\in\Lipb\bigg(\sum_{i=1}^{3} w_i d^{-\frac{i}{2}}\|D^i f\|_\infty\bigg) \]
   for all $f\in\Cbi$ and $\lambda\in\Lambda$.
  \item[(vi)] There exists a bounded continuous function $\tilde{\kappa}\colon\R^d\to (0,\infty)$
   such that, for every $\epsilon>0$, there exists $K\Subset\R^d$ with
   $\sup_{x\in K^c}\frac{\tilde{\kappa}(x)}{\kappa(x)}\leq\epsilon$. Moreover,
   for every $T\geq 0$, there exists $c\geq 0$ with
   $\|S_\lambda(t)f\|_{\tilde{\kappa}}\leq c\|f\|_{\tilde{\kappa}}$
   for all $t\in [0,T]$, $f\in\Ck$ and $\lambda\in\Lambda$.
  \item[(vi')] For every $\epsilon>0$ and $K\Subset\R^d$, there exist a family
   $(\zeta_x)_{x\in K}$ of continuous functions $\zeta_x\colon\R^d\to\R$
   and $\tilde{K}\Subset\R^d$ with
   \begin{enumerate}[label=(\alph*)]
    \item $0\leq\zeta_x\leq 1$ and $\zeta_x(x)=1$,
    \item $\sup_{y\in\tilde{K}}\zeta_x(y)\leq\epsilon$ for all $x\in K$,
    \item The function $f:=\frac{r}{\kappa}(1-\zeta_x)$ satisfies $f\in D(A_\lambda)$,
     $\|A_\lambda f\|_\kappa\leq\epsilon$ and
     \[ \lim_{h\downarrow 0}\left\|\frac{S_\lambda(h)f-f}{h}-A_\lambda f\right\|_\kappa=0
     	\quad\mbox{for all } x\in K \mbox{ and } \lambda\in\Lambda. \]
   \end{enumerate}
 \end{itemize}
\end{assumption}

The conditions~(vi) and~(vi') are two sufficient criteria from~\cite{BKN23} in order to 
guarantee that Assumption~\ref{ass:cher}(vi) is satisfied.\ The first one is a moment
condition while the second one characterizes continuity from above by a suitable
family of cut-off functions. For every $t\geq 0$, $f\in\Ck$ and $x\in\R^d$, we define
\[ (I(t)f)(x):=\sup_{\lambda\in\Lambda}\,(S_\lambda(t)f)(x). \]

\begin{theorem} \label{thm:nisio}
 Suppose that Assumption~\ref{ass:nisio} is satisfied. Then, there exists a strongly continuous
 convex monotone semigroup $(S(t))_{t\geq 0}$ on $\Ck$ with $S(t)0=0$ given by
 \[ S(t)f=\lim_{n\to\infty}I(\pi_n^t)f \quad\mbox{for all } (f,t)\in\Ck\times\R_+ \]
 such that $\Cbi\subset D(A)$ and $Af=\sup_{\lambda\in\Lambda}A_\lambda f$
 for all $f\in\Cbi$. Furthermore,
 \[ 0\leq\big(S(t)f-I(\pi_n^t)f\big)\kappa\leq c_{r,t}^+h_n^{\gamma^+} \]
 for all $r,t\geq 0$, $f\in\Lipb(r)$ and $n\in\N$ with $h_n^{\gamma^+}\leq\epsilon_0$, where 
 \[ \gamma^+:=\begin{cases}
 	\nicefrac{1}{2}, & v_2=w_3=0, \\
 	\nicefrac{1}{6}, & \mbox{otherwise}. \end{cases} \]
 and $c_{r,t}^+$ is given by equation~\eqref{eq:nisio.crt}.
\end{theorem}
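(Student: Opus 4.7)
The plan is to apply Theorem~\ref{thm:cher} and Theorem~\ref{thm:rate2}(ii) to the one-step operator family $I(t)f := \sup_{\lambda\in\Lambda}S_\lambda(t)f$. First I would verify Assumption~\ref{ass:cher}: conditions (i), (ii), (iv), (vii) follow at once from the corresponding linear-semigroup properties because $\sup$ preserves convexity, monotonicity, the fixed point $0$, and the commutator bound; (iii) is immediate from Assumption~\ref{ass:nisio}(i); (v) together with the identification $I'(0)f=\sup_{\lambda}A_\lambda f$ on $\Cbi$ follows from Assumption~\ref{ass:nisio}(iv)--(v); and condition (vi) is guaranteed by either (vi) or (vi') of Assumption~\ref{ass:nisio} through the criteria in~\cite{BKN23}. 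Theorem~\ref{thm:cher} then produces $(S(t))_{t\geq 0}$ with $Af=\sup_{\lambda}A_\lambda f$ on $\Cbi$.

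For the lower bound $S(t)f\geq I(\pi_n^t)f$, the key observation is the subadditivity $I(s)I(t)f\geq I(s+t)f$, obtained by writing $I(s)(\sup_\mu S_\mu(t)f)\geq \sup_\lambda S_\lambda(s)S_\lambda(t)f=I(s+t)f$. Iterating gives $I(h_n/m)^m f\geq I(h_n)f$ and hence $I(h_n/m)^{k_m^t}f\geq I(h_n)^{\lfloor k_m^t/m\rfloor}f$; passing to the limit $m\to\infty$ in the Chernoff formula from Theorem~\ref{thm:cher}, with $\lfloor k_m^t/m\rfloor\to k_n^t$, yields $S(t)f\geq I(\pi_n^t)f$ pointwise.

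For the upper bound I would apply Theorem~\ref{thm:rate2}(ii), which requires verifying Assumption~\ref{ass:rate}(i), (iii) and Assumption~\ref{ass:rate2}(i), (iii). From $\|I(h_n)f-f\|_\kappa\leq\sup_\lambda\|S_\lambda(h_n)f-f\|_\kappa\leq h_n\sup_\lambda\|A_\lambda f\|_\kappa$ and Assumption~\ref{ass:nisio}(v), one gets $\rho_1(x_1,x_2)=v_1 x_1+v_2 x_2$, which puts us in the regime $p=0$ when $v_2=0$ and $p=1$ otherwise. For the consistency lower bound defining $\rho_3$, using $u_n(t)=I(h_n)u_n(t-h_n)$ for $t\geq h_n$, I would split
\[
\partial_t u_n^\epsilon(t)-Au_n^\epsilon(t)-\frac{u_n^\epsilon(t)-I(h_n)u_n^\epsilon(t-h_n)}{h_n}
\]
into (a) a time-discretisation error $\partial_t u_n^\epsilon(t)-h_n^{-1}(u_n^\epsilon(t)-u_n^\epsilon(t-h_n))$, controlled by Lemma~\ref{lem:deriv2}; (b) the generator time-regularity $Au_n^\epsilon(t-h_n)-Au_n^\epsilon(t)$, bounded using Assumption~\ref{ass:nisio}(v) applied to $u_n^\epsilon$ and Corollary~\ref{cor:time}; and (c) the scheme consistency $h_n^{-1}(I(h_n)u_n^\epsilon(t-h_n)-u_n^\epsilon(t-h_n))-Au_n^\epsilon(t-h_n)$, handled via the linear-semigroup identity $h^{-1}(S_\lambda(h)g-g)-A_\lambda g=h^{-1}\int_0^h(S_\lambda(s)A_\lambda g-A_\lambda g)\,\d s$ combined with the Lipschitz bound on $A_\lambda g$ from Assumption~\ref{ass:nisio}(v) and Lemma~\ref{lem:deriv+} to control the spatial derivatives of $u_n^\epsilon$ by negative powers of $\epsilon_2$.

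The bound for $\rho_3$ takes the form $\sum_i\theta_i^+(r,t)h_n^{\alpha_i^+}\epsilon_2^{-\beta_i^+}$, and the rate from Theorem~\ref{thm:rate2}(ii) is then $\gamma^+=\min\{\tfrac{1}{1+p},\alpha_i^+/(1+\beta_i^+)\}$. When $v_2=w_3=0$ (first-order case), the generator is first-order and only first-order spatial derivatives of $u_n^\epsilon$ enter in all error terms; all relevant ratios equal $\tfrac{1}{2}$, giving $\gamma^+=\tfrac{1}{2}$. In the genuinely second-order case, the $w_3$-term produces a bound involving $\epsilon_2^{-2}$ while the time-discretisation step contributes $h_n\epsilon_1^{-2}$ with $\epsilon_1=\epsilon_2^2$, i.e.\ $h_n\epsilon_2^{-4}$; balancing $\epsilon_2=h_n^{\gamma^+}$ forces $\gamma^+=\tfrac{1}{6}$. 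The main obstacle I expect is the careful bookkeeping in step~(c): one must express $S_\lambda(s)A_\lambda g-A_\lambda g$ using only the Lipschitz regularity of $A_\lambda g$ (since $A_\lambda^2 g$ is not controlled by Assumption~\ref{ass:nisio}(v)) while keeping the $\epsilon_2$-powers sharp enough that the optimisation actually produces $\tfrac{1}{6}$ and not a worse exponent.
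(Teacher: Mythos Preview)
Your plan for the upper bound is essentially the paper's: verify Assumption~\ref{ass:cher}, then Assumption~\ref{ass:rate}(i),(iii) and Assumption~\ref{ass:rate2}(i),(iii), and invoke Theorem~\ref{thm:rate2}(ii). The three-term decomposition (a)--(c), the use of the identity $h^{-1}(S_\lambda(h)g-g)-A_\lambda g=h^{-1}\int_0^h(S_\lambda(s)A_\lambda g-A_\lambda g)\,\d s$ for~(c), and the identification of the binding term $h_n^{\alpha}\epsilon_2^{-2}$ coming from $w_3$ all match the paper. Two small corrections: from $S_\lambda(h)f-f=\int_0^h S_\lambda(s)A_\lambda f\,\d s$ together with Assumption~\ref{ass:nisio}(i) you get $\rho_1(x_1,x_2)=e^{\omega}(v_1 x_1+v_2 x_2)$, not $v_1 x_1+v_2 x_2$; and for~(c) the paper bounds $\|S_\lambda(s)A_\lambda g-A_\lambda g\|_\kappa$ via the time-regularity estimate~\eqref{eq:nisio.time} (i.e.\ Corollary~\ref{cor:time} applied to the Lipschitz function $A_\lambda g$), which is what actually converts the $w_i$-Lipschitz bound into the factor $s^\alpha$ you need before integrating.

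For the lower bound the paper takes a different route: it invokes the comparison principle (Theorem~\ref{thm:comp}) to obtain $S_\lambda(t)f\leq S(t)f$ for each $\lambda$, hence $I(t)f\leq S(t)f$, and then iterates via monotonicity and the semigroup property of $S$. Your subadditivity approach is a legitimate alternative in spirit, but the step ``$I(h_n/m)^{k_m^t}f\geq I(h_n)^{\lfloor k_m^t/m\rfloor}f$'' does not follow from $I(h_n/m)^m\geq I(h_n)$ alone: writing $k_m^t=m\lfloor k_m^t/m\rfloor+r$ with $0\leq r<m$, subadditivity plus monotonicity only yields $I(h_n/m)^{k_m^t}f\geq I(h_n/m)^r\, I(h_n)^{\lfloor k_m^t/m\rfloor}f$, and the outer $I(h_n/m)^r$ cannot be dropped. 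The clean fix is to run the Chernoff limit at the time $\tau:=k_n^t h_n$ (so the step counts match exactly, $k'^{\tau}_m=m k_n^t$), obtaining $S(k_n^t h_n)f\geq I(\pi_n^t)f$; this is exactly the conclusion the paper's comparison argument produces as well.
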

\begin{proof}
 First, we verify Assumption~\ref{ass:cher}. For every $t\geq 0$, $f,g\in\Ck$ and $x,y\in\R^d$,
 \begin{align*}
  \|I(t)f-I(t)g\|_\kappa &\leq\sup_{\lambda\in\Lambda}\|S_\lambda(t)f-S_\lambda(t)g\|_\kappa, \\
  |(I(t)f)(x)-(I(t)f)(y)| &\leq\sup_{\lambda\in\Lambda}|(S_\lambda(t)f)(x)-(S_\lambda(t)f)(y)|
 \end{align*}
 and therefore Assumption~\ref{ass:nisio} yields that the conditions~(i)--(iv) and~(vii) are satisfied.
 Since Assumption~\ref{ass:nisio}(i) and~(v) imply
 \[ \sup_{\lambda\in\Lambda}\|S_\lambda(t)f-f\|_\kappa
 	\leq\sup_{\lambda\in\Lambda}\int_0^t \|S_\lambda(s)A_\lambda f\|_\kappa\,\d s
 	\leq e^{\omega t}t\sup_{\lambda\in\Lambda}\|A_\lambda f\|_\kappa<\infty \]
 for all $t\geq 0$ and $f\in\Cbi$, we obtain from Assumption~\ref{ass:nisio}(iv) that 
 Assumption~\ref{ass:cher}(v) is valid.
 Furthermore, Assumption~\ref{ass:nisio}(vi) and~(vi') together with~\cite[Lemma~2.11 and Corollary~2.15]{BKN23}
 guarantee that condition~(vi) is satisfied.\
 The first part of the claim follows from Theorem~\ref{thm:cher} and the inequality
 \[ I(\pi_n^t)f\leq S(t)f \quad\mbox{for all } t\geq 0, f\in\Ck \mbox{ and } n\in\N \]
 holds by construction. Indeed, Theorem~\ref{thm:comp} implies 
 $I(t)f=\sup_{\lambda\in\Lambda}S_\lambda(t)f\leq S(t)f$ and
 the semigroup property guarantees $I(s)I(t)f\leq S(s+t)f$ for all $s,t\geq 0$ and $f\in\Ck$. 

 Second, we verify Assumption~\ref{ass:rate}(i) and~(iii). For every $\lambda\in\Lambda$,
 $h\in [0,1]$ and $f\in\Cbi$, it follows from Assumption~\ref{ass:nisio}(v) that
 \[ \|S_\lambda(h)f-f\|_\kappa
 	\leq e^{\omega h}h\sup_{\lambda\in\Lambda}\|A_\lambda f\|_\kappa
 	\leq e^\omega\big(v_1 d^{-\frac{1}{2}}\|D^1 f\|_\infty+v_2 d^{-1}\|D^2 f\|_\infty\big)h. \]
 Taking the supremum over $\lambda\in\Lambda$ yields that Assumption~\ref{ass:rate}(i)
 is satisfied with 
 \[ \rho_1(x_1, x_2):=e^\omega(v_1 x_1+v_2 x_2)=a_1(x_1)+a_2 x_2^p \quad\mbox{for all } x_1, x_2\geq 0, \]
 where $a_1(x_1):=e^\omega v_1 x_1$, $a_2:=e^\omega v_2$ and 
 \[ p:=\begin{cases}
 	0, & a_2=0, \\
 	1, & a_2>0.
 \end{cases} \]
 For every $r,T\geq 0$, $s,t\in [0,T]$ and $f\in\Lipb(r)$, Corollary~\ref{cor:time} implies
 \begin{equation} \label{eq:nisio.time}
  \|S_\lambda(s)f-S_\lambda(t)f\|_\kappa\leq c_{r,T}|s-t|^\alpha,
 \end{equation}
 where $c_{r,T}:=e^{\omega T}\big(2+e^\omega v_1+e^\omega v_2 b_{0,1}\big)r$ and 
 $\alpha:=\frac{1}{1+p}$. Let $\epsilon=(\epsilon_1,\epsilon_2)$ with $\epsilon_1,\epsilon_2\in (0,\epsilon_0]$, 
 $n\in\N$, $r\geq 0$, $f\in\Lipb(r)$, $t\geq h_n$ and  $u_n(t):=I(\pi_n^t)f$. It holds
 \begin{align}
  &\partial_t u_n^\epsilon(t)-Au_n^\epsilon(t)-\frac{u_n^\epsilon(t)-I(h_n)u_n^\epsilon(t-h_n)}{h_n} \nonumber \\
  &=\frac{I(h_n)u_n^\epsilon(t-h_n)-u_n^\epsilon(t-h_n)}{h_n}-Au_n^\epsilon(t-h_n) 
 +Au_n^\epsilon(t-h_n)-Au_n^\epsilon(t) \nonumber \\
  &\quad\; +\partial_t u_n^\epsilon(t)-\frac{u_n^\epsilon(t)-u_n^\epsilon(t-h_n)}{h_n}. \label{eq:nisio.ub}
 \end{align}
 Assumption~\ref{ass:nisio}(v), inequality~\eqref{eq:nisio.time} and Lemma~\ref{lem:deriv} imply
 \begin{align*}
  &\left\|\frac{I(h_n)u_n^\epsilon(t-h_n)-u_n^\epsilon(t-h_n)}{h}-Au_n^\epsilon(t-h_n)\right\|_\kappa \\
  &\leq\sup_{\lambda\in\Lambda}
  	\left\|\frac{S_\lambda(h_n)u_n^\epsilon(t-h_n)-u_n^\epsilon(t-h_n)}{h_n}-A_\lambda u_n^\epsilon(t-h_n)\right\|_\kappa \\
  &\leq\sup_{\lambda\in\Lambda}\frac{1}{h_n}\int_0^{h_n}
  	\|S_\lambda(s)A_\lambda u_n^\epsilon(t-h_n)-A_\lambda u_n^\epsilon(t-h_n)\|_\kappa\,\d s \\
  &\leq\sup_{\lambda\in\Lambda}\frac{1}{h_n}\int_0^{h_n}
  	c_{r,t} e^{\omega(t-h_n+\epsilon_1)}
  	\sum_{i=1}^{3}w_ib_{0,i-1}\epsilon_2^{1-i}s^\alpha\,\d s \\
  &\leq\frac{c_{r,t} }{1+\alpha}e^{\omega(t-h_n+\epsilon_1)}
  	\sum_{i=1}^{3}w_ib_{0,i-1} h_n^\alpha\epsilon_2^{1-i}.
 \end{align*}
 Moreover, it follows from Assumption~\ref{ass:nisio}(v) and Lemma~\ref{lem:deriv2} that
 \begin{align*}
  &\|Au_n^\epsilon(t-h_n)-Au_n^\epsilon(t)\|_\kappa
  \leq\sup_{\lambda\in\Lambda}\|A_\lambda u_n^\epsilon(t-h_n)-A_\lambda u_n^\epsilon(t)\|_\kappa \\
  &\leq\sum_{i=1}^2 v_i d^{-\frac{i}{2}}\|D^i u_n^\epsilon(t-h_n)-D^i u_n^\epsilon(t)\|_\kappa 
  \leq\sum_{i=1}^2 v_i d^{-\frac{i}{2}} \int_0^{h_n} \|\partial_t D^i u	^\epsilon(t-s)\|_\kappa\,\d s \\
  &\leq\big(c_\kappa c_{r,t}(\epsilon_1+h_n)^\alpha+e^{\omega t}r\epsilon_2\big)
  	\sum_{i=1}^2 v_i b_{1,i} h_n\epsilon_1^{-1}\epsilon_2^{-i}
 \end{align*}
 and
 \[ \left\|\partial_t u_n^\epsilon(t)-\frac{u_n^\epsilon(t)-u_n^\epsilon(t-h_n)}{h_n}\right\|_\kappa
 	\leq\frac{1}{2}\big(c_\kappa c_{r,t}(\epsilon_1+h_n)^\alpha+e^{\omega t}r\epsilon_2)
 	b_{2,0} h_n\epsilon_1^{-2}. \]
 We obtain that Assumption~\ref{ass:rate}(iii) is satisfied with
 \begin{align*}
  \rho_3(\epsilon_1,\epsilon_2,h_n,r,t)
  &:=\frac{c_{r,t} }{1+\alpha}e^{\omega(t-h_n+\epsilon_1)}
  	\sum_{i=1}^{3}w_ib_{0,i-1} h_n^\alpha\epsilon_2^{1-i} \\
  &\quad\; +\big(c_\kappa c_{r,t}(\epsilon_1+h_n)^\alpha+e^{\omega t}r\epsilon_2\big)
  	\sum_{i=1}^2 v_i b_{1,i} h_n\epsilon_1^{-1}\epsilon_2^{-i} \\
  &\quad\; +\frac{1}{2}\big(c_\kappa c_{r,t}(\epsilon_1+h_n)^\alpha+e^{\omega t}r\epsilon_2) b_{2,0} h_n\epsilon_1^{-2}.
 \end{align*}
 In particular, for $\epsilon_1=\epsilon_2^{1+p}\geq h_n$, we obtain 
 that Assumption~\ref{ass:rate2}(iii) is satisfied. Hence, Theorem~\ref{thm:rate2}(ii) implies
 \[ 0\leq\big(S(t)f-I(\pi_n^t)f\big)\kappa\leq  c_{r,t}^+h_n^{\gamma^+} \]
 for all $r,t\geq 0$, $f\in\Lipb(r)$ and $n\in\N$ with $h_n^{\gamma^+}\leq\epsilon_0$, where
 \[ \gamma^+:=\begin{cases}
 	\nicefrac{1}{2}, & v_2=w_3=0, \\
 	\nicefrac{1}{6}, & \mbox{otherwise} \end{cases} \]
 and, for $\epsilon_1^+:=h_0^{(1+p)\gamma^+}$ with $h_0:=\max_{n\in\N}h_n$, 
 \begin{align}
  c_{r,t}^+ &:=e^{\omega(t+h_0)}\big(2r+e^\omega v_1 r+e^\omega v_2b_{0,1}^p r^{p}\big) \label{eq:nisio.crt}  \\
  &+\big(e^{\omega(t+\epsilon_1^+)}\big(1+e^{\omega h_0}\big)\big)
  	\big(3r+e^\omega v_1 r+e^\omega v_2 b_{0,1}^p r^{p}\big) \nonumber\\
  &+e^{\omega t}\Big(Lre^{\omega \epsilon_1^+}
  	+\Big(\frac{c_{r,t}}{1+\alpha}e^{\omega(t+\epsilon_1^+)}\sum_{i=1}^{3}w_ib_{0,i-1} 
  	+(2c_\kappa c_{r,t}+e^{\omega t}r)\Big(\sum_{i=1}^2 v_i b_{1,i} +\frac{1}{2}b_{2,0}\Big)\Big)t. \nonumber
 \end{align}
 We do not require $r\geq 1$ since Assumption~\ref{ass:rate2}(i) is satisfied for all $x_1,x_2\geq 0$. 
\end{proof}

In the second order case, the rate $\gamma^+=\nicefrac{1}{6}$ is determined by the term 
$w_3b_{0,3}h_n^\alpha\epsilon_2^{-2}$. The remaining terms would lead to the better 
convergence rate of $\gamma^+=\nicefrac{1}{4}$ which can be achieved under an additional 
condition on the family $(A_\lambda)_{\lambda\in\Lambda}$ of linear generators. 
In applications this typically means that the coefficients of the linear generators have to be
sufficiently smooth.

\begin{theorem} \label{thm:nisio2}
In addition to the assumptions of Theorem~\ref{thm:nisio}, we suppose that there exist
 $\tilde{v}_1, \ldots,\tilde{v}_{4}\geq 0$ with
 \begin{equation}  \label{eq:nisio2}
  A_\lambda f\in D(A_\lambda) \quad\mbox{and}\quad
  \|A_\lambda^2 f\|_\kappa\leq\sum_{i=1}^{4}\tilde{v}_id^{-\frac{i}{2}}\|D^i f\|_\infty
 \end{equation}
 for all $f\in\Cbi$ and $\lambda\in\Lambda$.\ Then, for every $r,t\geq 0$, $f\in\Lipb(r)$ and $n\in\N$
 with $h_n^{\gamma^+}\leq\epsilon_0$, 
 \[ 0\leq\big(S(t)f-I(\pi_n^t)f\big)\leq c_{r,t}^+h_n^{\gamma^+}
 	\quad\mbox{with}\quad \gamma^+:=\frac{1}{4}, \]
 where $c_{r,t}^+$ is given by equation~\eqref{eq:nisio2.crt}.
\end{theorem}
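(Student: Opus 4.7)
The plan is to reuse the framework of the proof of Theorem~\ref{thm:nisio} and only replace the consistency function $\rho_3$ by a sharper one that exploits the new assumption. Indeed, Assumption~\ref{ass:cher}, Assumption~\ref{ass:rate}(i), and the trivial lower bound $I(\pi_n^t)f\leq S(t)f$ carry over verbatim from Theorem~\ref{thm:nisio}, together with the parameters $p=1$ and $\alpha=\nicefrac{1}{2}$ coming from Corollary~\ref{cor:time}.

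The key new input is a second-order Taylor expansion of each linear semigroup. Since $A_\lambda f\in D(A_\lambda)$ for every $f\in\Cbi$ by assumption, strong continuity of $(S_\lambda(t))_{t\geq 0}$ and the identity $\frac{\d}{\d t}S_\lambda(t)g=S_\lambda(t)A_\lambda g$ for $g\in D(A_\lambda)$ yield
\[ S_\lambda(h)f-f-hA_\lambda f=\int_0^h\int_0^s S_\lambda(u)A_\lambda^2 f\,\d u\,\d s, \]
which together with condition~\eqref{eq:nisio2} gives
\[ \left\|\frac{S_\lambda(h)f-f}{h}-A_\lambda f\right\|_\kappa
  \leq\frac{he^{\omega h}}{2}\sum_{i=1}^{4}\tilde{v}_i d^{-\frac{i}{2}}\|D^if\|_\infty. \]
Because $(I(h)f)(x)=\sup_\lambda (S_\lambda(h)f)(x)$ and $(Af)(x)=\sup_\lambda (A_\lambda f)(x)$ for $f\in\Cbi$, the pointwise inequality $|\sup_\lambda a_\lambda-\sup_\lambda b_\lambda|\leq\sup_\lambda|a_\lambda-b_\lambda|$ transfers this bound to the nonlinear operator $I(h)$, and Lemma~\ref{lem:deriv} applied to $f=u_n^\epsilon(t-h_n)$ then produces
\[ \left\|\frac{I(h_n)u_n^\epsilon(t-h_n)-u_n^\epsilon(t-h_n)}{h_n}-Au_n^\epsilon(t-h_n)\right\|_\kappa
  \leq\frac{h_n e^{\omega(t+\epsilon_1)}}{2}\sum_{i=1}^{4}\tilde{v}_i r\,b_{0,i-1}\epsilon_2^{1-i}. \]
This replaces the factor $h_n^\alpha$ appearing in the first summand of the old $\rho_3$ by a full factor $h_n$, at the price of one extra spatial derivative.

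With this improvement, I would decompose $\partial_t u_n^\epsilon(t)-Au_n^\epsilon(t)-h_n^{-1}(u_n^\epsilon(t)-I(h_n)u_n^\epsilon(t-h_n))$ exactly as in~\eqref{eq:nisio.ub}: the first summand is controlled by the new Taylor estimate above, while the remaining summands $Au_n^\epsilon(t-h_n)-Au_n^\epsilon(t)$ and $\partial_t u_n^\epsilon(t)-h_n^{-1}(u_n^\epsilon(t)-u_n^\epsilon(t-h_n))$ are handled verbatim as in the proof of Theorem~\ref{thm:nisio} through Assumption~\ref{ass:nisio}(v) and Lemma~\ref{lem:deriv2}. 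Collecting the three contributions delivers a new $\rho_3$ in the polynomial form demanded by Assumption~\ref{ass:rate2}(iii), and Theorem~\ref{thm:rate2}(ii) finally yields the desired bound with the explicit constant $c_{r,t}^+$.

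The main obstacle is the bookkeeping that identifies $\gamma^+=\nicefrac{1}{4}$. Under the forced ansatz $\epsilon_2=h_n^{\gamma_2}$ and $\epsilon_1=\epsilon_2^{1+p}=h_n^{2\gamma_2}$, the worst term from the new Taylor estimate is $\tilde{v}_4\, h_n\epsilon_2^{-3}\sim h_n^{1-3\gamma_2}$; the worst term from $Au_n^\epsilon(t-h_n)-Au_n^\epsilon(t)$ is of order $\epsilon_1^{\alpha}\, v_2\, h_n\,\epsilon_1^{-1}\epsilon_2^{-2}\sim h_n^{1-3\gamma_2}$; and the time-discretization term is of order $\epsilon_1^{\alpha}\,b_{2,0}\,h_n\,\epsilon_1^{-2}\sim h_n^{1-3\gamma_2}$. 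All three simultaneously enforce $1-3\gamma_2\geq\gamma_2$, so the optimal and tight choice is $\gamma_2=\gamma^+=\nicefrac{1}{4}$, which proves the theorem.
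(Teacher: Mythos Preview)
Your proposal is correct and follows essentially the same approach as the paper: both replace the first summand in~\eqref{eq:nisio.ub} by the second-order Taylor bound $\tfrac{1}{2}e^{\omega(t+\epsilon_1)}r\sum_{i=1}^{4}\tilde{v}_ib_{0,i-1}h_n\epsilon_2^{1-i}$ coming from~\eqref{eq:nisio2}, keep the other two summands unchanged from Theorem~\ref{thm:nisio}, and then invoke Theorem~\ref{thm:rate2}(ii) with $p=1$ to read off $\gamma^+=\nicefrac{1}{4}$. Your bookkeeping of the dominant exponents under $\epsilon_1=\epsilon_2^2\geq h_n$ matches the paper's implicit computation.
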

\begin{proof}
 Again, we choose $\rho_1(x_1, x_2):=e^\omega(\tilde v_1 x_1+\tilde v_2 x_2)$.
 Let $\epsilon=(\epsilon_1,\epsilon_2)$ with $\epsilon_1,\epsilon_2\in (0,\epsilon_0]$,
 $n\in\N$, $r\geq 0$, $f\in\Lipb(r)$ and $t\geq h_n$.
 Assumption~\ref{ass:nisio}(i) and Lemma~\ref{lem:deriv} yield
 \begin{align*}
  &\left\|\frac{I(h_n)u_n^\epsilon(t-h_n)-u_n^\epsilon(t-h_n)}{h_n}-Au_n^\epsilon(t-h_n)\right\|_\kappa \\
  &\leq\sup_{\lambda\in\Lambda}\frac{1}{h_n}\int_0^{h_n}
  	\|S_\lambda(s_1)A_\lambda u_n^\epsilon(t-h_n)-A_\lambda u_n^\epsilon(t-h_n)\|_\kappa\,\d s_1 \\
  &\leq\sup_{\lambda\in\Lambda}\frac{1}{h_n}\int_0^{h_n}
  	\int_0^{s_1}\|S_\lambda(s_2)A_\lambda^2 u_n^\epsilon(t-h_n)\|_\kappa\,\d s_2\,\d s_1 \\
  &\leq\frac{1}{2}e^{\omega h_n}\sum_{i=1}^{4}\tilde{v}_i d^{\frac{i}{2}}\|D^i u_n^\epsilon(t-h_n)\|_\infty  h_n
  	\leq\frac{1}{2}e^{\omega(t+\epsilon_1)}r\sum_{i=1}^{4}\tilde{v}_ib_{0,i-1} h_n\epsilon_2^{1-i}.
 \end{align*}
 Combining the previous estimate with equation~\eqref{eq:nisio.ub} and the estimates
 from the proof of Theorem~\ref{thm:nisio} show that Assumption~\ref{ass:rate}(iii)
 is satisfied 
 for  $\epsilon_1=\epsilon_1^2\geq h_n$ and $\alpha=\nicefrac{1}{2}$.
 Hence, we can apply Theorem~\ref{thm:rate2}(ii) to obtain  
 \[ 0\leq\big(S(t)f-I(\pi_n^t)f\big)\kappa\leq  c_{r,t}^+h^{\gamma^+} \] 
 for all all $r,t\geq 0$, $f\in\Lipb(r)$ and $n\in\N$, where $\gamma^+:=\nicefrac{1}{4}$ and 
 \begin{align}
  c_{r,t}^+ &:=e^{\omega(t+h_0)}\big(2r+e^\omega v_1 r+e^\omega v_2 b_{0,1}^p r^{p}\big)\label{eq:nisio2.crt}  \\
  &\quad\;+\big(e^{\omega(t+\epsilon_1^+)}\big(1+e^{\omega h_0}\big)\big)
  	\big(3r+e^\omega v_1 r+e^\omega v_2 b_{0,1}^p r^{p}\big) \nonumber\\
  &\quad\; +e^{\omega t}\Big(Lre^{\omega \epsilon_1^+}
  	+\frac{1}{2}e^{\omega(t+\epsilon_1^+)}r\sum_{i=1}^{4}\tilde{v}_ib_{0,i-1} 
  	+(2c_\kappa c_{r,t}+e^{\omega t}r)\Big(\sum_{i=1}^2 v_i b_{1,i}+\frac{1}{2}b_{2,0}\Big)\Big)t \nonumber
 \end{align}
 with $\epsilon_1^+=h_0^{2\gamma^+}$ for $h_0:=\max_{n\in\N}h_n$.
\end{proof}

As seen during the proof of the previous theorem, the condition
\[ \lim_{h\downarrow 0}\sup_{\lambda\in\Lambda}
	\left\|\frac{S_\lambda(h)f-f}{h}-A_\lambda f\right\|_{\infty, K}=0 \]
from Assumption~\ref{ass:nisio} is guaranteed by equation~\eqref{eq:nisio2}.

\begin{example}
 Let $\Lambda\subset\S^d_+\times\R^d$ be a bounded set, where $\S^d_+$ contains 
 all symmetric positive semi-definite $d\times d$-matrices. For every $(\sigma,m)\in\Lambda$
 and $x\in\R^d$, we define 
 \[ (S_{\sigma,m}(t)f)(x):=\e[f(x+\sigma W_t+mt)], \]
 where $(W_t)_{t\geq 0}$ is a standard $d$-dimensional Brownian motion. 
 Due to It\^o's formula, the generator is given by 
 \[ (A_{\sigma,m}f)(x)=\frac{1}{2}\tr(\sigma^2 D^2f(x))+m^T Df(x)
 	\quad\mbox{for all } f\in\Cb^2 \mbox{ and } x\in\R^d. \]
 Let $I(t)f:=\sup_{(\sigma,m)\in\Lambda}S_{\sigma,m}(t)f$ for all $t\geq 0$ and $f\in\Cb$.
 Using It\^o's formula again, it is straightforward to show that Assumption~\ref{ass:nisio} 
 is satisfied, where condition~(vi') follows, for instance, from~\cite[Corollary~2.16]{BKN23}. 
 Moreover, by~\cite[Theorem~5.5]{BDKN22}, the corresponding semigroup can
 be represented as the value function of a dynamic stochastic control problem, i.e., 
 \[ (S(t)f)(x)=\sup_{(\sigma,m)\in\A}\e\left[f\left(x+\int_0^t \sigma_s\,\d W_s+\int_0^t m_s\,\d s\right)\right] \]
 for all $t\geq 0$, $f\in\Cb$ and $x\in\R^d$, where $\A$ contains all predictable
 processes taking values in $\Lambda$. Since the coefficients of $A_{\sigma,m}$
 are constant, Theorem~\ref{thm:nisio2} yields
 \[ 0\leq\big(S(t)f-I(\pi_n^t)f\big)\leq c_{r,t}^+h_n^{\gamma^+}
 	\quad\mbox{with}\quad \gamma^+:=\frac{1}{4} \]
 for all $r,t\geq 0$, $f\in\Lipb(r)$ and $n\in\N$, where $c_{r,t}$ depends on
 $\sup_{(\sigma,m)\in\Lambda}(|\sigma|^2+|m|)$. 
 
  The result can be extended to bounded Lipschitz continuous drifts
  $m\colon\R^d\to\R^d$ by defining $(S_{\sigma,m}(t)f)(x):=\e[f(X_t^{(\sigma,m,x)}]$,
  where $X^{(\sigma,m,x)}$ denotes the unique solution of 
  \[ \d X_t^{(\sigma,m,x)}=\sigma\,\d W_t+m(X_t^{(\sigma,m,x)})\,\d t,
 	\quad X_0^{(\sigma,m,x)}=x. \]
 The generator is now given by 
  \[ (A_{\sigma,m}f)(x)=\frac{1}{2}\tr(\sigma^2 D^2f(x))+m(x)^TDf(x)
 	\quad\mbox{for all } f\in\Cb^2 \mbox{ and } x\in\R^d. \]
 While Theorem~\ref{thm:nisio} can always be applied, Theorem~\ref{thm:nisio2} requires 
 $A_{\sigma,m}\in D(A_{\sigma,m})$ which is satisfied for $m\in\Cb^2$.
\end{example}

\subsection{Law of large numbers for convex expectations}

Let $\kappa\equiv 1$ and denote by $\Cb$ the space of all bounded continuous
functions $f\colon\R^d\to\R$ endowed with the supremum norm $\|\cdot\|_\infty$.
Let $(\xi_n)_{n\in\N}\subset\H^d$ be an iid sequence of random vectors
$\xi_n\colon\Omega\to\R^d$ on a convex expectation space $(\Omega,\H,\E)$
with $\lim_{c\to\infty}\E[(|\xi_1|-c)^+]=0$. In addition, we assume $f(X)\in\H$
for all $f\in\Cb$ and $X\in\H^d$. In~\cite[Theorem 3.4]{BK22}, it was shown that
\begin{equation} \label{eq:LLN.conv}
 \frac{1}{n}\E\left[nf\left(\frac{1}{n}\sum_{i=1}^n \xi_i\right)\right]\to\bar{\E}[f(\zeta)]
 \quad\mbox{for all } f\in\Cb,
\end{equation}
where $\zeta\colon\bar{\Omega}\to\R^d$ is maximally distributed, i.e.,
\[ \bar{\E}[f(\zeta)]=\sup_{y\in\R^d}\big(f(y)-\varphi(y))
	\quad\mbox{with}\quad \varphi(y):=\sup_{z\in\R^d}(yz-\E[z\xi_1]). \]
For an overview on convex expectations, we refer to~\cite{Peng19} and~\cite[Appendix~B]{BK22}.
W.l.o.g., we subsequently assume that the random vectors $(\xi_n)_{n\in\N}$
and $\zeta$ are defined on the same convex expectation space. The converge
in equation~\eqref{eq:LLN.conv} was obtained by applying Theorem~\ref{thm:cher}
with $I(0):=\id_{\Cb}$ and
\[ (I(t)f)(x):=t\E\left[\frac{1}{t}f(x+t\xi_1)\right] \]
for all $t>0$, $f\in\Cb$ and $x\in\R^d$. In addition, the generator of the corresponding
semigroup $(S(t))_{t\geq 0}$ is given by
\[ (Af)(x)=\E[Df(x)\xi_1] \quad\mbox{for all } f\in\Cb^1 \mbox{ and } x\in\R^d, \]
where $\Cb^1$ consists of all bounded continuously differentiable functions
$f\colon\R^d\to\R$ with bounded first order partial derivatives. Under an additional 
moment condition, we now provide a rate for the convergence in equation~\eqref{eq:LLN.conv}.

\begin{theorem} \label{thm:LLN}
 Let $\E[c|\xi_1|^2]<\infty$ for all $c\geq 0$. Then,
 \[ -c_{r,t}^-\sqrt{h_n}\leq\frac{1}{n}\E\left[nf\left(\frac{1}{n}\sum_{i=1}^n \xi_i\right)\right]-\E[f(\zeta)]
 	\leq c_{r,t}^+\sqrt{h_n} \]
 for all $r\geq 1$, $f\in\Lipb(r)$ and $n\in\N$, where $c_{r,t}^\pm$ are given by equation~\eqref{eq:LLN.cr-}
 and~\eqref{eq:LLN.cr+}.
\end{theorem}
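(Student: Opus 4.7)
The plan is to apply Theorem~\ref{thm:rate2} at $t=1$ with $h_n=1/n$ and $\kappa\equiv 1$. Iterating $I(h)f(x) = h\E[h^{-1}f(x+h\xi_1)]$ using the iid property of $(\xi_n)_{n\in\N}$ on the convex expectation space together with cash-invariance of $\E$ yields $I(\pi_n^1)f(0) = \tfrac{1}{n}\E[nf(\tfrac{1}{n}\sum_{i=1}^n\xi_i)]$, while $(S(1)f)(0) = \E[f(\zeta)]$ by construction of the maximal distribution (already shown in~\cite{BK22}). Hence the two inequalities of the theorem reduce to Theorem~\ref{thm:rate2}(i)-(ii) applied at $x=0$, and it remains to verify Assumption~\ref{ass:cher}, Assumption~\ref{ass:rate}(i) and Assumption~\ref{ass:rate2}(i)-(iii).

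Assumption~\ref{ass:cher} is established in~\cite[Theorem~3.4]{BK22}. For Assumption~\ref{ass:rate}(i), a first-order Taylor expansion together with cash-invariance and monotonicity of $\E$ yields
\[ \|I(h)f-f\|_\infty \leq \E[|\xi_1|]\,\|Df\|_\infty\,h, \]
so $\rho_1(x_1,x_2) = \E[|\xi_1|]\,x_1$ works, giving $p=0$ in Assumption~\ref{ass:rate2}(i) and the natural choice $\epsilon_1=\epsilon_2$. The consistency bounds Assumption~\ref{ass:rate}(ii)-(iii) I would verify following the template of the proof of Theorem~\ref{thm:nisio}. Write
\[ \partial_t u_n^\epsilon(t) - Au_n^\epsilon(t) - \frac{u_n^\epsilon(t)-I(h_n)u_n^\epsilon(t-h_n)}{h_n} = E_1 + E_2 + E_3, \]
where $E_1$ is the step-consistency error at $t-h_n$, $E_2 := Au_n^\epsilon(t-h_n)-Au_n^\epsilon(t)$ and $E_3 := \partial_t u_n^\epsilon(t) - h_n^{-1}(u_n^\epsilon(t)-u_n^\epsilon(t-h_n))$. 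The terms $E_2, E_3$ are controlled by Lemma~\ref{lem:deriv2}, and for $E_1$ a second-order Taylor expansion in space gives pointwise
\[ \bigl|h_n^{-1}(u_n^\epsilon(t-h_n,\cdot+h_n\xi_1) - u_n^\epsilon(t-h_n,\cdot)) - Du_n^\epsilon(t-h_n,\cdot)\xi_1\bigr| \leq \tfrac{h_n}{2}\|D^2 u_n^\epsilon\|_\infty |\xi_1|^2, \]
which combined with $\|D^2 u_n^\epsilon\|_\infty \leq Cr\epsilon_2^{-1}$ from Lemma~\ref{lem:deriv} and the moment bound $\E[c|\xi_1|^2]<\infty$ propagates to $|E_1| \leq Crh_n\epsilon_2^{-1}$. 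Hence Assumption~\ref{ass:rate2}(iii) holds with exponents $\alpha=\beta=1$, giving $\gamma^+=\tfrac12$. The corresponding bound for $\rho_2$ (with $u(t)=S(t)f$ in place of $u_n(t)$) is obtained identically, since Lemma~\ref{lem:deriv} applies to both.

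The main technical obstacle is that, for a convex (non-linear) expectation, one cannot split $\E[Du_n^\epsilon\cdot\xi_1 + h_nR]$ additively into $\E[Du_n^\epsilon\cdot\xi_1] + h_n\E[R]$. To propagate the pointwise Taylor remainder to a bound on $E_1$, I would invoke a quantitative continuity estimate for $\E$ that follows from cash-invariance and monotonicity; the moment condition $\E[c|\xi_1|^2]<\infty$ then ensures the remainder contributes only at order $h_n$. Apart from this adaptation, the estimates mirror those of Theorem~\ref{thm:nisio} for first-order Nisio generators, which explains why the rate $\sqrt{h_n}$ is achieved, the constants $c_{r,t}^\pm$ of~\eqref{eq:LLN.cr-}-\eqref{eq:LLN.cr+} being read off from Theorem~\ref{thm:rate2}(i)-(ii) with the $\rho_i$ above.
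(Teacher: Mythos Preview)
Your overall architecture is correct and matches the paper: reduce to Theorem~\ref{thm:rate2}, verify Assumption~\ref{ass:rate}(i) via a first-order Taylor bound (giving $p=0$), and decompose the consistency error as $E_1+E_2+E_3$. You also correctly identify the nonlinearity of $\E$ as the central difficulty. However, there are two places where your proposed resolution does not go through.

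\textbf{The term $E_1$.} Cash-invariance and monotonicity alone are not enough. They let you pass from $\E[B+R]$ to $\E[B+\tfrac{h}{2}\|D^2u^\epsilon\|_\infty|\xi_1|^2]$, but you still need to bound $\E[B+M]-\E[B]$ for the \emph{random} $M=\tfrac{h}{2}\|D^2u^\epsilon\|_\infty|\xi_1|^2$, and cash-invariance only handles constant shifts. The paper closes this with the convexity estimate of Lemma~\ref{lem:lambda}: for $\lambda\in(0,1]$,
\[
\E[B+R]-\E[B]\le \lambda\bigl(\E[\lambda^{-1}R+B]-\E[B]\bigr),
\]
and then chooses $\lambda:=h\epsilon_2^{-1}$ so that $\lambda^{-1}R$ has size $O(1)$ (this is exactly where $\|D^2u^\epsilon\|_\infty\lesssim r\epsilon_2^{-1}$ from Lemma~\ref{lem:deriv} and the moment bound $\E[c|\xi_1|^2]<\infty$ enter). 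The factor $\lambda=h\epsilon_2^{-1}$ out front then gives the required $h\epsilon_2^{-1}$ contribution to $\rho_2$. So the missing ingredient is convexity, not cash-invariance/monotonicity.

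\textbf{The term $E_2$.} Your claim that $E_2$ is controlled by Lemma~\ref{lem:deriv2} as in the Nisio proof is incorrect. In Theorem~\ref{thm:nisio} one has $A=\sup_\lambda A_\lambda$ with each $A_\lambda$ a \emph{linear} differential operator, so $|Au(t-h)-Au(t)|\le \sup_\lambda|A_\lambda u(t-h)-A_\lambda u(t)|$ reduces to derivative bounds. Here $Af=\E[Df\,\xi_1]$ is genuinely convex in $Df$, and $\E[Du^\epsilon(t-h)\xi_1]-\E[Du^\epsilon(t)\xi_1]$ cannot be bounded by $\|Du^\epsilon(t-h)-Du^\epsilon(t)\|_\infty$ times a constant. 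The paper applies Lemma~\ref{lem:lambda} a second time, now with $\lambda:=h\epsilon_1^{-1}$, to obtain the $h\epsilon_1^{-1}$ term in $\rho_2$.

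A minor point: writing $\rho_1(x_1,x_2)=\E[|\xi_1|]\,x_1$ presumes positive homogeneity of $\E$. The correct choice is $\rho_1(x_1,x_2)=\E[d^{1/2}x_1|\xi_1|]$ with $x_1$ inside the expectation; this is still non-decreasing in $x_1$ and fits Assumption~\ref{ass:rate2}(i) with $p=0$.
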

\begin{proof}
 First, we verify Assumption~\ref{ass:rate}(i). For every $h>0$, $f\in\Cb^1$,
 and $x\in\R^d$,
 \begin{align*}
  \left|\frac{I(h)f-f}{h}\right|(x) &\leq\E\left[\frac{|f(x+h\xi_1)-f(x)|}{h}\right] \\
  &\leq\E\left[\frac{1}{h}\int_0^h |Df(x+h\xi_1)\xi_1|\,\d s\right]
  \leq\E[\|Df\|_\infty |\xi_1|].
 \end{align*}
 Hence, we can choose $\rho_1(x_1,x_2):=a_1(x_1):=\E[d^{\frac{1}{2}}x_1|\xi_1|]$ 
 and Corollary~\ref{cor:time} can be applied with $\alpha:=1$, $\omega:=0$ and 
 $c_r:=c_{r,T}:=2r+\E[d^{\frac{1}{2}}r|\xi_1|]$.

 Second, we verify Assumption~\ref{ass:rate}(ii) and~(iii). Let $\epsilon=(\epsilon_1,\epsilon_2)$ 
 with $\epsilon_1,\epsilon_2>0$, $n\in\N$, $r\geq 1$, $t\geq h_n$ and $f\in\Lipb(r)$. 
 With $h:=h_n$, it holds
 \begin{align*}
  \partial_t u^\epsilon(t)-Au^\epsilon(t)-\frac{u^\epsilon(t)-I(h)u^\epsilon(t-h)}{h}
  &=\frac{I(h)u^\epsilon(t-h)-u^\epsilon(t-h)}{h}-Au^\epsilon(t-h) \\
  &\quad\; +Au^\epsilon(t-h)-Au^\epsilon(t) \\
  &\quad\; +\partial_t u^\epsilon(t)-\frac{u^\epsilon(t)-u^\epsilon(t-h)}{h}.
 \end{align*}
 Applying Lemma~\ref{lem:lambda} with $\lambda:=h\epsilon_2^{-1}\in (0,1]$ and
 Lemma~\ref{lem:deriv} yields
 \begin{align*}
  &\left(\frac{I(h)u^\epsilon(t-h)-u^\epsilon(t-h)}{h}-Au^\epsilon(t-h)\right)(x) \\
  &=\E\left[\frac{1}{h}\int_0^h Du^\epsilon(t-h,x+s\xi_1)\xi_1\,\d s\right]
  	-\E[Du^\epsilon(t-h),x)\xi_1] \\
  &\leq\lambda\E\left[\frac{1}{\lambda h}\int_0^h |(Du^\epsilon(t-h,x+s\xi_1)-Du^\epsilon(t-h,x))\xi_1|\,\d s
  	+|Du^\epsilon(t-h,x)\xi_1|\right] \\
  &\quad\; +\lambda\E[|Du^\epsilon(t-h,x)\xi_1|] \\
  &\leq\lambda\E\left[\frac{1}{\lambda h}\int_0^h \|D^2 u^\epsilon(t-h)\|_\infty |\xi_1|^2 s\,\d s+\|Du^\epsilon(t-h)\|_\infty |\xi_1|\right]
  	+\lambda\E[\|Du^\epsilon(t-h)\|_\infty |\xi_1|] \\
  &\leq\lambda\E\left[\frac{h\epsilon_2^{-1}}{2\lambda}dr b_{0,1}|\xi_1|^2+d^\frac{1}{2}r|\xi_1|\right]
  	+\lambda\E[d^\frac{1}{2}r|\xi_1|] \\
  &=\left(\E\Big[\frac{1}{2}drb_{0,1}|\xi_1|^2+d^\frac{1}{2}r|\xi_1|\Big]+\E[d^\frac{1}{2}r|\xi_1|]\right)h\epsilon_2^{-1}
 \end{align*}
 for all $x\in\R^d$. Furthermore, for $\lambda:=h\epsilon_1^{-1}\in (0,1]$, we can use
 Lemma~\ref{lem:deriv2} to estimate
 \begin{align*}
  Au^\epsilon(t-h)-Au^\epsilon(t)
  &=\E[Du^\epsilon(t-h)\xi_1]-\E[Du^\epsilon(t)\xi_1] \\
  &\leq\lambda\E\left[\frac{(Du^\epsilon(t-h)-Du^\epsilon(t))\xi_1}{\lambda}+Du^\epsilon(t)\xi_1\right]
  	+\lambda\E[Du^\epsilon(t)\xi_1] \\
  &\leq\lambda\E\left[\frac{1}{\lambda}\int_0^h \|\partial_t Du^\epsilon(t-s)\|_\infty |\xi_1|\,\d s+Du^\epsilon(t)\xi_1\right]
  	+\lambda\E[Du^\epsilon(t)\xi_1] \\
  &\leq\lambda\E\left[\frac{h\epsilon_1^{-1}}{\lambda} d^{\frac{1}{2}}(c_r\epsilon_1+r\epsilon_2)\epsilon_2^{-1} b_{1,1} |\xi_1|
  	+d^\frac{1}{2}r|\xi_1|\right]+\lambda\E[d^\frac{1}{2}r|\xi_1|] \\
  &\leq\left(\E\big[\big(d^{\frac{1}{2}}(c_r\epsilon_1+r\epsilon_2)\epsilon_2^{-1} b_{1,1}+d^\frac{1}{2}r\big)|\xi_1|\big]
  	+\E[d^\frac{1}{2}r|\xi_1|]\right)h\epsilon_1^{-1}
 \end{align*}
 and
 \[ \partial_t u^\epsilon(t)-\frac{u^\epsilon(t)-u^\epsilon(t-h)}{h}
 	\leq\frac{1}{2}(c_r\epsilon_1+r\epsilon_2) b_{2,0} h\epsilon_1^{-2}. \]
 Hence, we can choose
 \begin{align*}
  \rho_2(\epsilon_1,\epsilon_2,h_n,r,t)
  &:=\Big(\E\Big[\tfrac{1}{2}drb_{0,1}|\xi_1|^2+d^\frac{1}{2}r|\xi_1|\Big]+\E[d^\frac{1}{2}r|\xi_1|]\Big)h_n\epsilon_2^{-1} \\
  & +\Big(\E\big[\big(d^\frac{1}{2}(c_r\epsilon_1+r\epsilon_2)\epsilon_2^{-1} b_{1,1}+d^\frac{1}{2}r\big)|\xi_1|\big]
  	+\E[d^\frac{1}{2}r|\xi_1|]\Big)h_n\epsilon_1^{-1} \\
  & +\tfrac{1}{2}(c_r\epsilon_1+r\epsilon_2)\epsilon_1^{-1} b_{2,0} h_n\epsilon_1^{-1}.
\end{align*}
 Similarly, one can show that Assumption~\ref{ass:rate}(iii) is satisfied with
 \begin{align*}
  \rho_3(\epsilon_1,\epsilon_2,h_n,r,t)
   &:=\Big(\E\Big[\tfrac{1}{2}drb_{0,1}|\xi_1|^2+d^\frac{1}{2}r|\xi_1|\Big]+\E[d^\frac{1}{2}r|\xi_1|]\Big)h_n\epsilon_2^{-1} \\
  & +\Big(\E\big[\big(d^{\frac{1}{2}}(c_r(\epsilon_1+h_n)+r\epsilon_2)\epsilon_2^{-1} b_{1,1}+d^\frac{1}{2}r\big)|\xi_1|\big]
  	+\E[d^\frac{1}{2}r|\xi_1|]\Big)h_n\epsilon_1^{-1} \\
  & +\tfrac{1}{2}(c_r(\epsilon_1+h_n)+r\epsilon_2)\epsilon_1^{-1} b_{2,0} h_n\epsilon_1^{-1}.
\end{align*}
 It follows from Theorem~\ref{thm:rate2} that
 \[ -c_r^-\sqrt{h_n}\leq\frac{1}{n}\E\left[nf\left(\frac{1}{n}\sum_{i=1}^n \xi_i\right)\right]-\E[f(\zeta)]
 	\leq c_r^+\sqrt{h_n} \]
 for all $r\geq 1$, $f\in\Lipb(r)$ and $n\in\N$, where
 \begin{align}
  c_{r,t}^- &:=8r+5\E[d^\frac{1}{2}r|\xi_1|]t
  	+\E\big[\tfrac{1}{2}drb_{0,1}|\xi_1|^2+d^\frac{1}{2}r|\xi_1|\big]t \nonumber \\
  	&\quad\; +\E\big[(d^\frac{1}{2}(c_r+r)b_{1,1}+d^\frac{1}{2}r)|\xi_1|\big]t
   +\tfrac{1}{2}(c_r+r)b_{2,0}t,  \label{eq:LLN.cr-} \\
  c_{r,t}^+ &:=8r+5\E[d^\frac{1}{2}r|\xi_1|]t
  	+\E\big[\tfrac{1}{2}drb_{0,1}|\xi_1|^2+d^\frac{1}{2}r|\xi_1|\big]t \nonumber \\
  	&\quad\; +\E\big[(d^\frac{1}{2}(2c_r+r)b_{1,1}+d^\frac{1}{2}r)|\xi_1|\big]t
  +\tfrac{1}{2}(2c_r+r)b_{2,0}t \label{eq:LLN.cr+}
 \end{align}
 and $c_r=2r+\E[d^\frac{1}{2}r|\xi_1|]$.
\end{proof}

\subsection{Central limit theorem for convex expectations}

Let $\kappa\equiv 1$ and denote by $\S^d_+$ the set of all symmetric positive-semidefinite
$d\times d$-matrices. Let $(\xi_n)_{n\in\N}\subset\H^d$ be an iid sequence of random
vectors $\xi_n\colon\Omega\to\R^d$ on a convex expectation space $(\Omega,\H,\E)$
with $\E[a\xi_1]=0$ for all $a\in\R^d$ and $\lim_{c\to\infty}\E[(|\xi_1|^2-c)^+]=0$.
In addition, we assume that $f(X)\in\H$ for all $f\in\Cb$ and $X\in\H^d$. In~\cite[Theorem 4.1]{BK22},
it was shown that
\begin{equation} \label{eq:CLT.conv}
\frac{1}{n} \E\left[nf\left(\frac{1}{\sqrt{n}}\sum_{i=1}^n \xi_i\right)\right]\to\bar{\E}[f(\zeta)]
 \quad\mbox{for all } f\in\Cb,
\end{equation}
where $\zeta\colon\bar{\Omega}\to\R^d$ is $G$-distributed with
$G(a):=\E[\frac{1}{2}\xi_1^T a\xi_1]$ for all $a\in\R^{d\times d}$. This means
that $\bar{\E}[f(\zeta)]=(S(1)f)(0)$ for all $f\in\Cb$, where $(S(t))_{t\geq 0}$ denotes
the unique strongly continuous convex monotone semigroup on $\Cb$ with generator
\[ (Af)(x)=\E\left[\tfrac{1}{2}\xi_1^T D^2f(x)\xi_1\right]
	\quad\mbox{for all } f\in\Cb^2 \mbox{ and } x\in\R^d. \]
Here, we denote by $\Cb^2$ the space of all bounded continuous twice continuously
differentiable functions $f\colon\R^d\to\R$ such that the first and second derivative
are bounded.
W.l.o.g., we subsequently assume that the random vectors $(\xi_n)_{n\in\N}$
and $\zeta$ are defined on the same convex expectation space. The converge
in equation~\eqref{eq:CLT.conv} was obtained by applying Theorem~\ref{thm:cher}
with $I(0):=\id_{\Cb}$ and
\[ (I(t)f)(x):=t\E\left[\frac{1}{t}f(x+\sqrt{t}\xi_1)\right] \]
for all $t>0$, $f\in\Cb$ and $x\in\R^d$. Under an additional moment condition,
we are now able to prove a convergence rate for equation~\eqref{eq:LLN.conv}.
If $\E$ is sublinear, condition~\eqref{eq:CLT.ass} is satisfied with $a=\E[|\xi_1|]$ and $p=1$.

\begin{theorem} \label{thm:CLT}
 Let $\E[c|\xi_1|^3]<\infty$ for all $c\geq 0$.\ Suppose that there exist $a\geq 0$ and 
 $p\geq 1$ with
 \begin{equation} \label{eq:CLT.ass}
  \E[\lambda(c_1|\xi_1|^2+c_2|\xi_1|^3)]\leq a\lambda^p\E[c_1|\xi_1|^2+c_2|\xi_1|^3]
   \quad\mbox{for all } c_1,c_2\geq 0 \mbox{ and } \lambda\geq 1.
 \end{equation}
 Then, for every $r\geq 1$, $f\in\Lipb(r)$ and $n\in\N$,
 \[ -c_{r,t}^-h_n^\gamma\leq\frac{1}{n}\E\left[n f\left(\frac{1}{\sqrt{n}}\sum_{i=1}^n \xi_i\right)\right]-\E[f(\zeta)]
 	\leq c_{r,t}^+h_n^\gamma \quad\mbox{with}\quad\gamma:=\frac{1}{4+2p}, \]
 where $c_{r,t}^\pm$ are defined by equation~\eqref{eq:CLT.cr-} and equation~\eqref{eq:CLT.cr+}.
\end{theorem}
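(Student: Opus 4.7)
The plan is to apply Theorem~\ref{thm:rate2} to the Chernoff scheme
$(I(t)f)(x)=t\,\E[\tfrac{1}{t}f(x+\sqrt{t}\xi_1)]$, paralleling the proof of Theorem~\ref{thm:LLN}
but with a $\sqrt{t}$-scaling and the second-order generator
$(Af)(x)=\E[\tfrac{1}{2}\xi_1^T D^2 f(x)\xi_1]$. The growth hypothesis~\eqref{eq:CLT.ass}
plays the role that sublinearity of $\E$ would play in the sublinear case, where $p=1$ and the
optimal rate reduces to $\gamma=1/6$, matching the second-order Nisio rate of Theorem~\ref{thm:nisio}.

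First I would verify Assumption~\ref{ass:rate}(i). For $f\in\Cbi$, Taylor-expanding
$f(x+\sqrt{h}\xi_1)$ to second order produces $f(x)+\sqrt{h}Df(x)\xi_1+\tfrac{h}{2}\xi_1^T D^2 f(\tilde x)\xi_1$.
Because $\E[a\xi_1]=0$ for every $a\in\R^d$, a standard convexity argument based on Lemma~\ref{lem:lambda}
shows that adding a mean-zero linear random term inside $\E$ preserves its value, thus killing the
$\sqrt{h}Df(x)\xi_1$ contribution. Bounding the quadratic remainder then yields
$\|I(h)f-f\|_\infty\leq\tfrac{1}{2}\E[|\xi_1|^2]\,\|D^2 f\|_\infty h$, so
$\rho_1(x_1,x_2)=\tfrac{d}{2}\E[|\xi_1|^2]x_2$ and $p=1$ in Assumption~\ref{ass:rate2}(i).
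Corollary~\ref{cor:time} then yields $\tfrac{1}{2}$-H\"older time regularity and forces the scaling
$\epsilon_1=\epsilon_2^2$ in Assumption~\ref{ass:rate2}(ii)--(iii).

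Next, to verify Assumption~\ref{ass:rate}(ii) and~(iii), I would split
\[ \partial_t u^\epsilon(t)-Au^\epsilon(t)-\tfrac{u^\epsilon(t)-I(h)u^\epsilon(t-h)}{h} \]
into three standard pieces exactly as in the proof of Theorem~\ref{thm:LLN}. The principal consistency
piece $(I(h)g-g)/h-Ag$, with $g=u^\epsilon(t-h)$ and analogously with $g=u_n^\epsilon(t-h)$, is now
handled by a \emph{third}-order Taylor expansion: the linear term is again killed by the mean-zero trick,
the quadratic term cancels $Ag$, and what remains is bounded by a polynomial of degree two and three
in $|\xi_1|$ with coefficients $\|D^2 g\|_\infty$ and $\sqrt{h}\|D^3 g\|_\infty$. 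The remaining two
pieces ($Au^\epsilon(t-h)-Au^\epsilon(t)$ and $\partial_t u^\epsilon(t)-\tfrac{u^\epsilon(t)-u^\epsilon(t-h)}{h}$)
are controlled via Lemma~\ref{lem:deriv2} together with Lemma~\ref{lem:lambda}, producing
$|\xi_1|^2$-valued contributions. By Lemma~\ref{lem:deriv}, $\|D^\ell g\|_\infty$ is of order
$\epsilon_2^{1-\ell}$, so the scalar prefactors multiplying $|\xi_1|^k$ generally exceed~$1$. This is
precisely where hypothesis~\eqref{eq:CLT.ass} enters: after extracting a common scale
$\lambda=\max(c_1,c_2)\geq 1$, one obtains $\E[c_1|\xi_1|^2+c_2|\xi_1|^3]\leq a\lambda^p\E[|\xi_1|^2+|\xi_1|^3]$,
and the extra factor $\lambda^{p-1}$ translates directly into the exponents $\beta_i^\pm$ demanded by
Assumption~\ref{ass:rate2}(ii)--(iii).

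Finally, invoking Theorem~\ref{thm:rate2} and optimizing $\epsilon_2=h^{\gamma_2}$ balances the dominant
consistency error against the $\epsilon_2$-contribution to produce $\gamma=\tfrac{1}{4+2p}$. The
main technical obstacle is the delicate bookkeeping in verifying Assumption~\ref{ass:rate}(ii)--(iii):
because the coefficients multiplying $|\xi_1|^2$ and $|\xi_1|^3$ scale differently in $h$ and
$\epsilon_2$, the common scaling fed into~\eqref{eq:CLT.ass} and the auxiliary splitting parameters
used in Lemma~\ref{lem:lambda} have to be tuned so that the $\lambda^{p-1}$ factor from the growth
condition combines correctly with the $\epsilon_2^{-\ell}$ blow-ups of the mollified derivatives; only
this balance yields the claimed exponent $1/(4+2p)$ rather than the rate one would read off from any
single term in isolation.
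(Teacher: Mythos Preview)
Your overall strategy matches the paper's, but there is a genuine error in the first step that propagates through the argument. You write $\rho_1(x_1,x_2)=\tfrac{d}{2}\E[|\xi_1|^2]\,x_2$ and conclude $p=1$ in Assumption~\ref{ass:rate2}(i), hence $\tfrac12$-H\"older time regularity and the scaling $\epsilon_1=\epsilon_2^2$. This is not valid: $\E$ is merely convex, so one cannot pull the scalar $\|D^2f\|_\infty$ outside. The honest bound is $\|I(h)f-f\|_\infty\le\E[\tfrac12\|D^2f\|_\infty|\xi_1|^2]\,h$, i.e.\ $\rho_1(x_1,x_2)=\E[\tfrac{d}{2}x_2|\xi_1|^2]$, and the only way to dominate this by a power of $x_2$ is to invoke the growth hypothesis~\eqref{eq:CLT.ass} itself, giving $\rho_1(x_1,x_2)\le a\,\E[\tfrac{d}{2}|\xi_1|^2]\,x_2^{p}$ with the \emph{same} $p$ as in~\eqref{eq:CLT.ass}. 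Consequently Corollary~\ref{cor:time} yields only $\tfrac{1}{1+p}$-H\"older regularity and the correct scaling is $\epsilon_1=\epsilon_2^{1+p}$; the estimates coming from Lemma~\ref{lem:deriv2} must be adjusted accordingly.

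A related imprecision is the phrase ``the quadratic term cancels $Ag$''. It does not: $\E[\tfrac12\xi_1^TD^2g(x+\theta\sqrt h\xi_1)\xi_1]-\E[\tfrac12\xi_1^TD^2g(x)\xi_1]$ is a difference of two nonlinear functionals and cannot be rewritten as $\E$ of the difference. The paper handles the principal piece exactly as the other two, via Lemma~\ref{lem:lambda} with the specific choice $\lambda=\sqrt h\,\epsilon_2^{-1}$; this is what produces the residual $|\xi_1|^2$ term you allude to and, after applying~\eqref{eq:CLT.ass} with scale $\epsilon_2^{-1}$, the dominant contribution $\sqrt h\,\epsilon_2^{-(1+p)}$. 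Your ``extract $\lambda=\max(c_1,c_2)$'' heuristic is in the right spirit but does not by itself show how the convexity defect is controlled. Fortunately the final rate $\gamma=\tfrac{1}{4+2p}$ is governed by this principal term, so once the $\rho_1$ and cancellation issues are corrected your argument goes through and coincides with the paper's.
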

\begin{proof}
 First, we verify Assumption~\ref{ass:rate}(i). For every $h>0$, $f\in\Cb^2$ and $x\in\R^d$,
 Taylor's formula implies
 \[ f(x+\sqrt{h}\xi_1)=f(x)+Df(x)\sqrt{h}\xi_1+h\int_0^1 \xi_1^T D^2f(x+s\sqrt{h}\xi_1)\xi_1(1-s)\,\d s \]
 Since~\cite[Lemma~B.2]{BK22} yields $\E[X+b\xi_1]=\E[X]$ for all $b\in\R^d$
 and $X\in\H^d$, we obtain
 \begin{align*}
  \left|\frac{I(h)f-f}{h}\right|(x)
  &=\left|\E\left[\frac{f(x+\sqrt{h}\xi_1)-f(x)}{h}\right]\right| \\
  &\leq\E\left[\int_0^1 |\xi_1^T D^2f(x+s\sqrt{h}\xi_1)\xi_1|(1-s)\,\d s\right]
  \leq\E\left[\frac{1}{2}\|D^2 f\|_\infty |\xi_1|^2\right].
 \end{align*}
 Hence, we can choose $\rho_1(x_1,x_2):=\E[\frac{d}{2}x_2|\xi_1|^2]\leq a\E[\frac{d}{2}|\xi_1|^2]x_2^p$ 
 for all $x_1,x_2\geq 1$ and Corollary~\ref{cor:time} can be applied with $a_1(x_1):=0$, 
 $a_2:=a\E[\frac{d}{2}|\xi_1|^2]$, $\alpha:=\frac{1}{1+p}$, $\omega:=0$ and 
 $c_r:=c_{r,T}:=(2r+a_2 b_{0,1}^pr^p)$.

 Second, we verify Assumption~\ref{ass:rate}(ii) and~(iii). Let $\epsilon=(\epsilon_1,\epsilon_2)$ 
 with $\epsilon_1,\epsilon_2>0$, $n\in\N$, $r\geq 1$, $t\geq h_n$ and $f\in\Lipb(r)$. 
 With $h:=h_n$, it holds
 \begin{align}
  \partial_t u^\epsilon(t)-Au^\epsilon(t)-\frac{u^\epsilon(t)-I(h)u^\epsilon(t-h)}{h}
  &=\frac{I(h)u^\epsilon(t-h)-u^\epsilon(t-h)}{h}-Au^\epsilon(t-h) \nonumber \\
  &\quad\; +Au^\epsilon(t-h)-Au^\epsilon(t) \nonumber \\
  &\quad\; +\partial_t u^\epsilon(t)-\frac{u^\epsilon(t)-u^\epsilon(t-h)}{h}. \label{eq:CLT.ub}
 \end{align}
 Applying Lemma~\ref{lem:lambda} with $\lambda:=\sqrt{h}\epsilon_2^{-1}\in (0,1]$,
 Lemma~\ref{lem:deriv} and condition~\eqref{eq:CLT.ass} yields
 \begin{align*}
  &\left(\frac{I(h)u^\epsilon(t-h)-u^\epsilon(t-h)}{h}-Au^\epsilon(t-h)\right)(x) \\
  &=\E\left[\int_0^1 \xi_1^T D^2u^\epsilon(t-h,x+s\sqrt{h}\xi_1)\xi_1 (1-s)\,\d s\right]
  	-\E\left[\frac{1}{2}\xi_1^T D^2u^\epsilon(t-h,x)\xi_1\right] \\
  &\leq\lambda\E\left[\frac{1}{\lambda}\int_0^1
  	\xi_1^T(D^2u^\epsilon(t-h,x+s\sqrt{h}\xi_1)-D^2u^\epsilon(t-h,x))\xi_1(1-s)\,\d s
  	+\frac{1}{2}\xi_1^T D^2u^\epsilon(t-h,x)\xi_1\right] \\
  &\quad\; -\lambda\E\left[\frac{1}{2}\xi_1^T D^2u^\epsilon(t-h,x)\xi_1\right] \\
  &\leq\lambda\E\left[\frac{1}{\lambda}\int_0^1 \|D^3u^\epsilon(t-h)\|_\infty |\xi_1|^3\sqrt{h}s(1-s)\,\d s
  	+\frac{1}{2}\|D^2u^\epsilon(t-h)\|_\infty |\xi_1|^2\right] \\
  &\quad\; +\lambda\E\left[\frac{1}{2}\|D^2u^\epsilon(t-h)\|_\infty |\xi_1|^2\right] \\
  &\leq\lambda\E\left[\left(\frac{\sqrt{h}\epsilon_2^{-1}}{6\lambda}d^\frac{3}{2}rb_{0,2}|\xi_1|^3
  	+\frac{1}{2}d rb_{0,1}|\xi_1|^2\right)\epsilon_2^{-1}\right]
  	+\lambda\E\left[\frac{1}{2}rb_{0,1}|\xi_1|^2\epsilon_2^{-1}\right] \\
  &\leq a\left(\E\left[\frac{1}{6}d^\frac{3}{2}rb_{0,2}|\xi_1|^3+\frac{1}{2} d rb_{0,1}|\xi_1|^2\right]
  	+\E\left[\frac{1}{2}drb_{0,1}|\xi_1|^2\right]\right)\sqrt{h}\epsilon_2^{-(1+p)}
 \end{align*}
 for all $x\in\R^d$. In addition, for the choice $\lambda:=h\epsilon_1^{-1}$, Lemma~\ref{lem:deriv},
 Lemma~\ref{lem:deriv2} and condition~\eqref{eq:CLT.ass} imply
 \begin{align*}
  &Au^\epsilon(t-h)-Au^\epsilon(t) \\
  &=\E\left[\frac{1}{2}\xi_1^T D^2u^\epsilon(t-h)\xi_1\right]-\E\left[\frac{1}{2}\xi_1^T D^2u^\epsilon(t)\xi_1\right] \\
  &\leq\lambda\E\left[\frac{1}{2\lambda}\xi_1^T(D^2u^\epsilon(t-h)-D^2u^\epsilon(t))\xi_1
  	+\frac{1}{2}\xi_1^T D^2u^\epsilon(t)\xi_1\right]
  	-\lambda\E\left[\frac{1}{2}\xi_1^T D^2u^\epsilon(t)\xi_1\right] \\
  &\leq\lambda\E\left[\frac{1}{2\lambda}\int_0^h \|\partial_t D^2u^\epsilon(t-s)\|_\infty |\xi_1|^2\,\d s
  	+\frac{1}{2}\|D^2u^\epsilon(t)\|_\infty |\xi_1|^2\right]
  	-\lambda\E\left[\frac{1}{2}\|D^2u^\epsilon(t)\|_\infty |\xi_1|^2\right] \\
  &\leq\lambda\E\left[\left(\frac{h\epsilon_1^{-1}}{2\lambda}
  	d\big(c_r\epsilon_1^\frac{1}{1+p}+r\epsilon_2\big)\epsilon_2^{-1} b_{1,2}
  	+\frac{1}{2}drb_{0,1}\right)|\xi_1|^2\epsilon_2^{-1}\right]
  	+\lambda\E\left[\frac{1}{2}d rb_{0,1}|\xi_1|^2\epsilon_2^{-1}\right] \\
  &\leq a\left(\E\left[\left(\frac{d}{2}\big(c_r\epsilon_1^\frac{1}{1+p}+r\epsilon_2\big)\epsilon_2^{-1} b_{1,2}
  	+\frac{1}{2} drb_{0,1}\right)|\xi_1|^2\right]+\E\left[\frac{1}{2}drb_{0,1}|\xi_1|^2\right]\right)h\epsilon_1^{-1}\epsilon_2^{-p}
 \end{align*}
 and
 \[ \partial_t u^\epsilon(t)-\frac{u^\epsilon(t)-u^\epsilon(t-h)}{h}
 	\leq\frac{1}{2}(c_r\epsilon_1^\frac{1}{1+p}+r\epsilon_2)\epsilon_1^{-\frac{1}{1+p}}
 	b_{2,0} h\epsilon_1^{-\frac{1+2p}{1+p}}. \]
 Hence, we can choose
 \begin{align*}
  &\rho_2(\epsilon_1,\epsilon_2,h_n,r,t)
  :=a\Big(\E\big[\tfrac{1}{6}d^\frac{3}{2}rb_{0,2}|\xi_1|^3+\frac{d}{2}rb_{0,1}|\xi_1|^2\big]
  +\E\big[\tfrac{d}{2}rb_{0,1}|\xi_1|^2\big]\Big)\sqrt{h_n}\epsilon_2^{-(1+p)} \\
  &\quad\;+a\Big(\E\Big[\Big(\tfrac{d}{2}\big(c_r\epsilon_1^{\frac{1}{1+p}}+r\epsilon_2\big)\epsilon_2^{-1} b_{1,2}
  +\tfrac{d}{2} rb_{0,1}\Big)|\xi_1|^2\Big]+\E\big[\tfrac{d}{2}rb_{0,1}|\xi_1|^2\big]\Big)h_n\epsilon_1^{-1}\epsilon_2^{-p} \\
  &\quad\;+\tfrac{1}{2}\big(c_r\epsilon_1^{\frac{1}{1+p}}+r\epsilon_2\big)\epsilon_1^{-{\frac{1}{1+p}}}
  b_{2,0} h_n\epsilon_1^{-\frac{1+2p}{1+p}}.
 \end{align*}
 Similarly, one can show that Assumption~\ref{ass:rate}(iii) is satisfied with
 \begin{align*}
  &\rho_3(\epsilon_1,\epsilon_2,h_n,r,t)
  :=a\Big(\E\big[\tfrac{1}{6}d^\frac{3}{2}rb_{0,2}|\xi_1|^3+\tfrac{d}{2}rb_{0,1}|\xi_1|^2\big]
  +\E\big[\tfrac{1}{2}drb_{0,1}|\xi_1|^2\big]\Big)\sqrt{h_n}\epsilon_2^{-(1+p)} \\
  &+a\Big(\E\Big[\Big(\tfrac{d}{2}\big(c_r(\epsilon_1+h_n)^{\frac{1}{1+p}}+r\epsilon_2\big)\epsilon_2^{-1} b_{1,2}
  +\tfrac{d}{2} rb_{0,1}\Big)|\xi_1|^2\Big]+\E\big[\tfrac{d}{2}rb_{0,1}|\xi_1|^2\big]\Big)h_n\epsilon_1^{-1}\epsilon_2^{-p} \\
  &+\tfrac{1}{2}\big(c_r (\epsilon_1+h_n)^{\frac{1}{1+p}}+r\epsilon_2\big)\epsilon_1^{-{\frac{1}{1+p}}}
  b_{2,0} h_n\epsilon_1^{-\frac{1+2p}{1+p}}.\\
  \end{align*}
 It follows from Theorem~\ref{thm:rate2} that
 \[ -c_r^-h_n^\gamma\leq\E\left[f\left(\frac{1}{\sqrt{n}}\sum_{i=1}^n \xi_i\right)\right]-\E[f(\zeta)]
 	\leq c_r^+h_n^\gamma \quad\mbox{with}\quad\gamma:=\frac{1}{4+2p} \]
 for all $r\geq 1$, $f\in\Lipb(r)$ and $n\in\N$, where 
 \begin{align}
  c_{r,t}^- &:=8r+3a\E\big[\tfrac{d}{2}|\xi_1|^2\big]b_{0,1}^p r^p+2a\E\big[\tfrac{d}{2}rb_{0,1}|\xi_1|^2\big]t 
  	+a\E\big[\tfrac{1}{6}d^\frac{3}{2}rb_{0,2}|\xi_1|^3+\tfrac{d}{2}rb_{0,1}|\xi_1|^2\big]t  \nonumber \\
  	&\quad\; +a\E\big[\tfrac{d}{2}\big((c_r+r)b_{1,2}+rb_{0,1}\big)|\xi_1|^2\big]+\tfrac{1}{2}(c_r+r)b_{2,0} 
  	\label{eq:CLT.cr-}  \\
  c_{r,t}^+ &:=8r+3a\E\big[\tfrac{d}{2}|\xi_1|^2\big]b_{0,1}^p r^p+2a\E\big[\tfrac{d}{2}rb_{0,1}|\xi_1|^2\big]t 
  	+a\E\big[\tfrac{1}{6}d^\frac{3}{2}rb_{0,2}|\xi_1|^3+\tfrac{d}{2}rb_{0,1}|\xi_1|^2\big]t  \nonumber \\
  	&\quad\; +a\E\big[\tfrac{d}{2}\big((2c_r+r)b_{1,2}+rb_{0,1}\big)|\xi_1|^2\big]+\tfrac{1}{2}(2c_r+r)b_{2,0} 
  \label{eq:CLT.cr+}
 \end{align}
 and $c_r=2r+a\E[\frac{d}{2}|\xi_1|^2]b_{0,1}^p r^p$.
\end{proof}

In the one-dimensional case, the convergence rate can be improved by imposing
an additional moment condition which allows for a higher order Taylor expansion
in the computation of the consistency error.

\begin{theorem} \label{thm:CLT2}
 Let $d=1$ and suppose that there exist $a\geq 0$ and $p\geq 1$ with
 \begin{equation} \label{eq:CLT.ass2}
  \E[\lambda(c_1|\xi_1|^2+c_2|\xi_1|^3)]\leq a\lambda^p\E[c_1|\xi_1|^2+c_2|\xi_2|^3]
   \quad\mbox{for all } c_1,c_2\geq 0 \mbox{ and } \lambda\geq 1.
 \end{equation}
 Furthermore, let $\E[c\xi_1^3]=0$ for all $c\in\R$ and $\E[c\xi_1^4]<\infty$ for all $c\geq 0$.
 Then, for every $r\geq 1$, $f\in\Lipb(r)$ and $n\in\N$,
 \[ -c_r^-h_n^\gamma\leq\frac{1}{n}\E\left[nf\left(\frac{1}{\sqrt{n}}\sum_{i=1}^n \xi_i\right)\right]-\E[f(\zeta)]
 	\leq c_r^+h_n^\gamma \quad\mbox{with}\quad\gamma:=\frac{1}{2+2p}, \]
 where $c_r^\pm$ are defined by equation~\eqref{eq:CLT2.cr-} and equation~\eqref{eq:CLT2.cr+}.
\end{theorem}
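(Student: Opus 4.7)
The strategy adapts the proof of Theorem~\ref{thm:CLT} by replacing the third-order Taylor expansion with a fourth-order one, exploiting the vanishing third moment $\E[c\xi_1^3]=0$ to cancel (rather than merely estimate) the cubic term.

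Assumption~\ref{ass:rate}(i) is verified exactly as in Theorem~\ref{thm:CLT}: a second-order Taylor expansion of $f\in\Cb^2$ combined with the cash-invariance $\E[Y+c\xi_1]=\E[Y]$ (from \cite[Lemma~B.2]{BK22}, valid because $\E[c\xi_1]=0$ for all $c\in\R$) gives $|I(h)f-f|/h\le\frac{1}{2}\E[|\xi_1|^2]\,\|f''\|_\infty$. Using~\eqref{eq:CLT.ass2} one sets $\rho_1(x_1,x_2)=a_2x_2^p$ with $a_2=a\E[\frac{1}{2}|\xi_1|^2]$, so that Corollary~\ref{cor:time} produces the time-H\"older exponent $\alpha=\frac{1}{1+p}$ for both $S(\cdot)f$ and $I(\pi_n^\cdot)f$.

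For Assumption~\ref{ass:rate}(ii)-(iii), the consistency error is decomposed as in~\eqref{eq:CLT.ub}. The two ``temporal'' pieces $Au^\epsilon(t-h)-Au^\epsilon(t)$ and $\partial_t u^\epsilon(t)-(u^\epsilon(t)-u^\epsilon(t-h))/h$ are handled verbatim as in Theorem~\ref{thm:CLT} via Lemma~\ref{lem:deriv}, Lemma~\ref{lem:deriv2}, the splitting Lemma~\ref{lem:lambda}, and~\eqref{eq:CLT.ass2}. The decisive new ingredient is the ``spatial'' consistency term $(I(h)v-v)/h-Av$ with $v:=u^\epsilon(t-h)$: the fourth-order Taylor expansion
\begin{align*}
\frac{v(x+\sqrt{h}\xi)-v(x)}{h}
&=\frac{v'(x)\xi}{\sqrt{h}}+\frac{1}{2}v''(x)\xi^2+\frac{\sqrt{h}}{6}v'''(x)\xi^3 \\
&\quad\;+\frac{h}{6}\int_0^1(1-s)^3\,v^{(4)}(x+s\sqrt{h}\xi)\,\xi^4\,\d s,
\end{align*}
combined with $\E[c\xi_1]=0$ and $\E[c\xi_1^3]=0$ (applied via \cite[Lemma~B.2]{BK22}), eliminates the first- and third-order terms \emph{inside} $\E$, leaving
$$\frac{I(h)v(x)-v(x)}{h}-Av(x)=\E\bigl[\tfrac{1}{2}v''(x)\xi_1^2+R_4\bigr]-\E\bigl[\tfrac{1}{2}v''(x)\xi_1^2\bigr]$$
with $|R_4|\le\frac{h}{24}\|v^{(4)}\|_\infty|\xi_1|^4$. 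Using $\|D^4u^\epsilon(t-h)\|_\infty\le rb_{0,3}\epsilon_2^{-3}$ (Lemma~\ref{lem:deriv}), Lemma~\ref{lem:lambda} with scaling $\lambda\sim h\epsilon_2^{-2}\in(0,1]$ (admissible because Assumption~\ref{ass:rate2} enforces $\epsilon_1=\epsilon_2^{1+p}\ge h$, hence $\epsilon_2^2\ge h$ whenever $p\ge 1$), the growth condition~\eqref{eq:CLT.ass2} and the fourth-moment finiteness $\E[c|\xi_1|^4]<\infty$ together yield a dominant contribution to $\rho_2$ of order $\theta(r,t)\,h\,\epsilon_2^{-(1+2p)}$---compare with $\sqrt{h}\,\epsilon_2^{-(1+p)}$ in Theorem~\ref{thm:CLT}. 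The analogous argument, using Corollary~\ref{cor:time} for $u_n(t)=I(\pi_n^t)f$, gives the same order bound for $\rho_3$.

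Finally I apply Theorem~\ref{thm:rate2}. In the notation of Assumption~\ref{ass:rate2}, the dominant exponents are $\alpha=1$, $\beta=1+2p$, giving $\alpha/(1+\beta)=1/(2+2p)$; the temporal terms and the H\"older exponent $1/(1+p)$ both give strictly larger ratios, so the overall rate is $\gamma=1/(2+2p)$, and $c_r^\pm$ are obtained by enumerating the explicit contributions of each piece of the decomposition. The principal obstacle is the careful use of the convex expectation structure: since $\E$ is genuinely nonlinear, the odd-order Taylor terms cannot be subtracted off linearly, but must be removed inside $\E$ via cash-invariance \emph{before} Lemma~\ref{lem:lambda} isolates the fourth-order remainder. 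This is precisely what turns the $\sqrt{h}$ prefactor of Theorem~\ref{thm:CLT} into an $h$, sharpening the rate from $1/(4+2p)$ to $1/(2+2p)$.
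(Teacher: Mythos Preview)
Your approach mirrors the paper's exactly: the same $\rho_1$, the decomposition~\eqref{eq:CLT.ub}, the fourth-order Taylor expansion with cash-invariance (via \cite[Lemma~B.2]{BK22}) removing both odd-order terms before Lemma~\ref{lem:lambda} is applied with $\lambda=h\epsilon_2^{-2}$, the verbatim reuse of the temporal estimates from Theorem~\ref{thm:CLT}, and the final appeal to Theorem~\ref{thm:rate2}.

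One minor slip worth flagging: the spatial consistency term actually contributes order $h\epsilon_2^{-(2+p)}$, not $h\epsilon_2^{-(1+2p)}$. After Lemma~\ref{lem:lambda} the common factor $\epsilon_2^{-1}$ inside $\E$ is extracted once via the growth condition~\eqref{eq:CLT.ass2}, giving $\lambda\cdot a\epsilon_2^{-p}=ah\epsilon_2^{-(2+p)}$. The dominant exponent $\beta=1+2p$ that you quote is nonetheless correct, but it comes from the \emph{temporal} pieces $Au^\epsilon(t-h)-Au^\epsilon(t)$ and the time-derivative approximation, which after substituting $\epsilon_1=\epsilon_2^{1+p}$ both scale like $h\epsilon_2^{-(1+2p)}$; since $1+2p\ge 2+p$ for $p\ge 1$, these temporal terms set the rate $\gamma=1/(2+2p)$.
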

\begin{proof}
 By choosing $\rho_1(x_1,x_2):=\E[\frac{1}{2}x_2|\xi_1|^2]\leq a\E[\frac{1}{2}|\xi_1|^2] x_2^p$,
 Assumption~\ref{ass:rate}(i) is satisfied and Corollary~\ref{cor:time} can be applied with 
 $a_1(x_1):=0$, $a_2:=a\E[\frac{1}{2}|\xi_1|^2]$, $\alpha:=\frac{1}{1+p}$, $\omega:=0$ and 
 $c_r:=c_{r,T}:=(2r+a_2 b_{0,1}^pr^p)$.
 Let $\epsilon=(\epsilon_1,\epsilon_2)$ with $\epsilon_1,\epsilon_2>0$,
 $n\in\N$, $r\geq 1$, $t\geq h_n$ and $f\in\Lipb(r)$. With $h:=h_n$, Taylor's formula implies
 \begin{align*}
  &u^\epsilon(t-h,x+\sqrt{h}\xi_1)
  =u^\epsilon(t-h,x)+Du^\epsilon(t-h,x)\sqrt{h}\xi_1+\frac{1}{2}D^2u^\epsilon(t-h,x)h\xi_1^2 \\
  &\quad +\frac{1}{6}D^3u^\epsilon(t-h,x)h^\frac{3}{2}\xi_1^3
  	+\frac{1}{6}\int_0^1 D^4u^\epsilon(t-h,x+s\sqrt{h}\xi_1)h^2\xi_1^4(1-s)^3\,\d s.
 \end{align*}
 Since~\cite[Lemma~B.2]{BK22} yields $\E[X+b\xi_1]=\E[X+b\xi_1^3]=\E[X]$ for all $b\in\R^d$
 and $X\in\H^d$, we can apply Lemma~\ref{lem:lambda} with $\lambda:=h\epsilon_2^{-2}\in (0,1]$,
 Lemma~\ref{lem:deriv} and condition~\eqref{eq:CLT.ass2} to obtain
 \begin{align*}
  &\frac{I(h)u^\epsilon(t-h)-u^\epsilon(t-h)}{h}-Au^\epsilon(t-h) \\
  &=\E\left[\frac{1}{2}D^2u^\epsilon(t-h,x)\xi_1^2
  	+\frac{1}{6}\int_0^1 D^4u^\epsilon(t-h,x+s\sqrt{h}\xi_1)h\xi_1^4(1-s)^3\,\d s\right] \\
  &\quad\; -\E\left[\frac{1}{2}D^2u^\epsilon(t-h,x)\xi_1^2\right] \\
  &\leq\lambda\E\left[\frac{1}{6\lambda}\int_0^1 \|D^4u^\epsilon(t-h)\|_\infty h\xi_1^4(1-s)^3\,\d s
  	+\frac{1}{2}\|D^2u^\epsilon(t-h)\|_\infty\xi_1^2\right] \\
  &\quad\; -\lambda\E\left[\frac{1}{2}\|D^2u^\epsilon(t-h)\|_\infty\xi_1^2\right] \\
  &\leq\lambda\E\left[\left(\frac{h\epsilon_2^{-2}}{24\lambda}rb_{0,3}\xi_1^4+\frac{1}{2}rb_{0,1}\xi_1^2\right)\epsilon_2^{-1}\right]
  	+\lambda\E\left[\frac{1}{2}rb_{0,1}\xi_1^2\epsilon_2^{-1}\right] \\
  &\leq a\left(\E\left[\frac{1}{24}rb_{0,3}\xi_1^4+\frac{1}{2}rb_{0,1}\xi_1^2\right)
  	+\E\left[\frac{1}{2}rb_{0,1}\xi_1^2\right]\right)h\epsilon_2^{-(2+p)}.
 \end{align*}
 Combining the previous estimate with equation~\eqref{eq:CLT.ub} and the
 estimates from Theorem~\ref{thm:CLT} shows that Assumption~\ref{ass:rate}(ii)
 is satisfied with
 \begin{align*}
  &\rho_2(\epsilon_1,\epsilon_2,h_n,r,t)
  :=a\Big(\E\big[\tfrac{1}{24}rb_{0,3}\xi_1^4+\tfrac{1}{2}rb_{0,1}\xi_1^2\big]
  	+\E\big[\tfrac{1}{2}rb_{0,1}\xi_1^2\big]\Big)h_n\epsilon_2^{-(2+p)} \\
  &\quad\; +\Big(\E\Big[\tfrac{1}{2}\Big(\big(c_r\epsilon_1^\frac{1}{1+p}+r\epsilon_2\big)\epsilon_2^{-1}b_{1,2}
  	+rb_{0,1}\Big)|\xi_1|^2\Big]+\E\big[\tfrac{1}{2}rb_{0,1}|\xi_1|^2\big]\Big)h_n\epsilon_1^{-1}\epsilon_2^{-p} \\
  &\quad\; +\tfrac{1}{2}\big(c_r\epsilon_1^\frac{1}{1+p}+r\epsilon_2\big)\epsilon_1^{-\frac{1}{1+p}}
 	b_{2,0} h_n\epsilon_1^{-\frac{1+2p}{1+p}}.
 \end{align*}
  Similarly, one can show that Assumption~\ref{ass:rate}(iii) is satisfied with
 \begin{align*}
  &\rho_3(\epsilon_1,\epsilon_2,h_n,r,t)
  :=a\Big(\E\big[\tfrac{1}{24}rb_{0,3}|\xi_1|^4+\tfrac{1}{2}rb_{0,1}|\xi_1|^2\big]
  	+\E\big[\tfrac{1}{2}rb_{0,1}|\xi_1|^2\big]\Big)h_n\epsilon_2^{-(2+p)} \\
  &\quad\; +\Big(\E\Big[\tfrac{1}{2}\Big(\big(c_r(\epsilon_1+h_n)^\frac{1}{1+p}+r\epsilon_2\big)\epsilon_2^{-1} b_{1,2}
  	+rb_{0,1}\Big)|\xi_1|^2\Big]+\E\big[\tfrac{1}{2}rb_{0,1}|\xi_1|^2\big]\Big)h_n\epsilon_1^{-1}\epsilon_2^{-p} \\
  &\quad\; +\tfrac{1}{2}\big(c_r(\epsilon_1+h_n)^\frac{1}{1+p}+r\epsilon_2\big)\epsilon_1^{-\frac{1}{1+p}}
 	b_{2,0} h_n\epsilon_1^{-\frac{1+2p}{1+p}}.
 \end{align*}
 It follows from Theorem~\ref{thm:rate2} that
 \[ -c_r^-h_n^\gamma\leq\frac{1}{n}\E\left[nf\left(\frac{1}{\sqrt{n}}\sum_{i=1}^n \xi_i\right)\right]-\E[f(\zeta)]
 	\leq c_r^+h_n^\gamma \quad\mbox{with}\quad\gamma:=\frac{1}{2+2p} \]
 for all $r\geq 1$, $f\in\Lipb(r)$ and $n\in\N$, where
 \begin{align}
  c_{r,t}^- &:=8r+3a\E\big[\tfrac{1}{2}|\xi_1|^2\big]b_{0,1}^p r^p+2a\E\big[\tfrac{1}{2}rb_{0,1}|\xi_1|^2\big]t 
  	+a\E\big[\tfrac{1}{24}rb_{0,3}|\xi_1|^4+\tfrac{1}{2}rb_{0,1}|\xi_1|^2\big] \nonumber \\
  	&\quad\; +\E\big[\tfrac{1}{2}\big((c_r+r)b_{1,2}+rb_{0,1}\big)|\xi_1|^2\big]
  	+\tfrac{1}{2}(c_r+r)b_{2,0},  \label{eq:CLT2.cr-} \\
  c_{r,t}^+ &:=8r+3a\E\big[\tfrac{1}{2}|\xi_1|^2\big]b_{0,1}^p r^p+2a\E\big[\tfrac{1}{2}rb_{0,1}|\xi_1|^2\big]t 
  	+a\E\big[\tfrac{1}{24}rb_{0,3}|\xi_1|^4+\tfrac{1}{2}rb_{0,1}|\xi_1|^2\big] \nonumber \\
  	&\quad\; +\E\big[\tfrac{1}{2}\big((2c_r+r)b_{1,2}+rb_{0,1}\big)|\xi_1|^2\big]
  	+\tfrac{1}{2}(2c_r+r)b_{2,0} \label{eq:CLT2.cr+}
 \end{align}
 and $c_r=2r+a\E[\frac{d}{2}|\xi_1|^2]b_{0,1}^p r^p$.
\end{proof}

\appendix

\section{Basic convexity estimates}

\begin{lemma} \label{lem:lambda}
 Let $X$ be a vector space and $\Phi\colon X\to\R$ be a convex functional. Then,
 \[ \Phi(x)-\Phi(y)\leq\lambda\left(\Phi\left(\frac{x-y}{\lambda}+y\right)-\Phi(y)\right)
 	\quad\mbox{for all } x,y\in X \mbox{ and } \lambda\in (0,1]. \]
\end{lemma}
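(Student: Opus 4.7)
The plan is to rewrite the claimed inequality in the equivalent form
\[ \Phi(x) \leq \lambda\,\Phi\!\left(\frac{x-y}{\lambda}+y\right) + (1-\lambda)\Phi(y), \]
which is manifestly a convex combination estimate, and then verify it by a direct application of the definition of convexity.

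The key observation I would make first is the algebraic identity
\[ x = \lambda\left(\frac{x-y}{\lambda}+y\right) + (1-\lambda)\,y, \]
which follows by expanding the right-hand side: $\lambda \cdot \frac{x-y}{\lambda} + \lambda y + (1-\lambda)y = (x-y) + y = x$. Since $\lambda \in (0,1]$, the weights $\lambda$ and $1-\lambda$ are nonnegative and sum to one, so $x$ is genuinely written as a convex combination of the two points $\frac{x-y}{\lambda}+y$ and $y$.

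Convexity of $\Phi$ then yields $\Phi(x) \leq \lambda\,\Phi\!\left(\frac{x-y}{\lambda}+y\right) + (1-\lambda)\Phi(y)$, and subtracting $\Phi(y)$ from both sides produces exactly the inequality in the statement. There is no real obstacle here: the only subtlety is spotting the correct convex combination, which is forced once one notices that $\frac{x-y}{\lambda}+y$ must appear with weight $\lambda$ so that the $y$-terms combine correctly. The case $\lambda = 1$ is trivially an equality, and for $\lambda \in (0,1)$ the argument above applies verbatim.
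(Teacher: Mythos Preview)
Your proof is correct and follows exactly the same approach as the paper: both write $x=\lambda\big(\frac{x-y}{\lambda}+y\big)+(1-\lambda)y$, apply convexity of $\Phi$, and rearrange.
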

\begin{proof}
 For every $x,y\in X$ and $\lambda\in (0,1]$, 
 \begin{align*}
  \Phi(x)-\Phi(y)
  &=\Phi\left(\lambda\left(\frac{x-y}{\lambda}+y\right)+(1-\lambda)y\right)-\Phi(y) \\
  &\leq\lambda\Phi\left(\frac{x-y}{\lambda}+y\right)+(1-\lambda)\Phi(y)-\Phi(y) \\
  &=\lambda\left(\Phi\left(\frac{x-y}{\lambda}+y\right)-\Phi(y)\right). \qedhere
 \end{align*}
\end{proof}

\begin{corollary} \label{cor:kappa}
 Let $\Phi$ be a convex operator $\Phi\colon\Ck\to\Ck$ such that there exists $c\geq 0$ with
 $\|\Phi(f)\|_\kappa\leq c\|f\|_\kappa$ for all $f\in\Ck$. Then,
 \[ \Phi\left(f+\frac{a}{\kappa}\right)\leq\Phi(f)+\frac{c|a|}{\kappa}
 	\quad\mbox{for all } f\in\Ck \mbox{ and } a\in\R. \]
\end{corollary}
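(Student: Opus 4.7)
The plan is to apply Lemma~\ref{lem:lambda} pointwise and then send the free rescaling parameter to zero. For each fixed $x_0 \in \R^d$, the map $\Phi_{x_0} \colon \Ck \to \R,\ g \mapsto \Phi(g)(x_0)$ is a convex functional, since the operator $\Phi$ is convex in the pointwise sense. Applying Lemma~\ref{lem:lambda} to $\Phi_{x_0}$ with the two arguments $f + a/\kappa$ and $f$ yields, for every $\lambda \in (0,1]$,
$$\Phi(f + a/\kappa)(x_0) - \Phi(f)(x_0) \leq \lambda \bigl( \Phi(f + a/(\lambda\kappa))(x_0) - \Phi(f)(x_0) \bigr).$$

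Next I would exploit the $\kappa$-boundedness hypothesis to control both terms on the right-hand side pointwise. Observing that $\|a/\kappa\|_\kappa = |a|$, the triangle inequality gives $\|f + a/(\lambda\kappa)\|_\kappa \leq \|f\|_\kappa + |a|/\lambda$, and hence
$$|\Phi(f + a/(\lambda\kappa))(x_0)| \leq \frac{c(\|f\|_\kappa + |a|/\lambda)}{\kappa(x_0)}, \qquad |\Phi(f)(x_0)| \leq \frac{c\|f\|_\kappa}{\kappa(x_0)}.$$
Combining these two bounds with the previous display gives
$$\Phi(f + a/\kappa)(x_0) - \Phi(f)(x_0) \leq \frac{2c\lambda \|f\|_\kappa + c|a|}{\kappa(x_0)}.$$

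Letting $\lambda \downarrow 0$ eliminates the $2c\lambda\|f\|_\kappa$ contribution and leaves exactly $c|a|/\kappa(x_0)$ on the right. Since $x_0 \in \R^d$ was arbitrary, this is precisely the asserted pointwise inequality $\Phi(f + a/\kappa) \leq \Phi(f) + c|a|/\kappa$. There is no real obstacle: the argument is a one-step reduction to Lemma~\ref{lem:lambda}, and the only modestly delicate point is the choice to pass to the limit $\lambda \downarrow 0$ (rather than optimize at a finite value) so that the blow-up $|a|/\lambda$ inside $\Phi$ is exactly compensated by the outer factor $\lambda$ supplied by the lemma.
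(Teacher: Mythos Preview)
Your proof is correct. Both arguments rest on a convex splitting followed by a limit, but the decompositions differ. The paper writes $f+\tfrac{a}{\kappa}=\lambda\cdot\tfrac{f}{\lambda}+(1-\lambda)\cdot\tfrac{a}{(1-\lambda)\kappa}$, applies convexity directly, bounds the second term by $\tfrac{c|a|}{\kappa}$, and then invokes Lemma~\ref{lem:lambda} only at the end to justify $\lambda\Phi(\tfrac{1}{\lambda}f)\to\Phi(f)$ as $\lambda\to 1$. You instead apply Lemma~\ref{lem:lambda} immediately with $y=f$ and send $\lambda\downarrow 0$, controlling both $\Phi$-terms on the right by the crude bound $|\Phi(g)(x_0)|\leq c\|g\|_\kappa/\kappa(x_0)$. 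Your route is arguably more direct, since the limit step is trivial (a term $2c\lambda\|f\|_\kappa$ visibly vanishes), whereas the paper's limit $\lambda\Phi(\tfrac{1}{\lambda}f)\to\Phi(f)$ requires a small side argument. The paper's version, on the other hand, isolates the $\tfrac{c|a|}{\kappa}$ contribution exactly before passing to any limit, which some readers may find conceptually cleaner.
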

\begin{proof}
 Let $f\in\Ck$ and $a\in\R$. For every $\lambda\in (0,1)$,
 \begin{align*}
  \Phi\left(f+\frac{a}{\kappa}\right)
  &\leq\lambda\Phi\left(\frac{1}{\lambda}f\right)+(1-\lambda)\Phi\left(\frac{a}{(1-\lambda)\kappa}\right) \\
  &\leq\lambda\Phi\left(\frac{1}{\lambda}f\right)+(1-\lambda)\frac{c|a|}{(1-\lambda)\kappa}
  =\lambda\Phi\left(\frac{1}{\lambda}f\right)+\frac{c|a|}{\kappa}.
 \end{align*}
 Furthermore, Lemma~\ref{lem:lambda} implies $\lambda\Phi\left(\frac{1}{\lambda}f\right)\to\Phi(f)$
 as $\lambda\to 1$.
\end{proof}

\begin{lemma} \label{lem:Jensen}
 Let $\Phi\colon\Ck\to\R$ be a convex monotone functional that is continuous from above.
 Furthermore, let $\mu$ be a probability measure on $\B(\R_+\times\R^d)$ and
 $u\colon\R_+\times\R^d\to\R$ be a function such that $u(t, \cdot)\in\Ck$ and
 \[ \int_{\R_+\times\R^d}u(s+t, \cdot+y)\,\mu(\d s\times\d y)\in\Ck
 	\quad\mbox{for all } t\geq 0. \]
 Then, for every $t\geq 0$,
 \[ \Phi\left(\int_{\R_+\times\R^d}u(s+t, \cdot+y)\,\mu(\d s\times\d y)\right)
 	\leq\int_{\R_+\times\R^d}\Phi(u(s+t, \cdot+y))\,\mu(\d s\times\d y). \]
\end{lemma}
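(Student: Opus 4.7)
The plan is to invoke the Fenchel--Moreau dual representation of $\Phi$. A convex, monotone functional on $\Ck$ that is continuous from above admits a representation
\[ \Phi(f)=\sup_{Q\in\mathcal{Q}}\left\{\int_{\R^d}f\,\d Q-\alpha(Q)\right\}\quad\text{for all } f\in\Ck, \]
where $\mathcal{Q}$ is a suitable family of countably additive probability measures on $\R^d$ and $\alpha\colon\mathcal{Q}\to(-\infty,\infty]$ is a convex penalty. Once this representation is in hand, the rest of the argument is an interchange of supremum and integration enabled by Fubini.

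Fix $t\geq 0$ and abbreviate $v(s,y):=u(s+t,\,\cdot\,+y)\in\Ck$ and $F:=\int_{\R_+\times\R^d}v(s,y)\,\mu(\d s\times\d y)\in\Ck$. For each $Q\in\mathcal{Q}$, Fubini's theorem applied to the product of the probability measures $Q$ and $\mu$ gives
\[ \int_{\R^d}F\,\d Q=\int_{\R_+\times\R^d}\left(\int_{\R^d}v(s,y)\,\d Q\right)\mu(\d s\times\d y). \]
Since $\mu$ is a probability measure, $\alpha(Q)=\int_{\R_+\times\R^d}\alpha(Q)\,\mu(\d s\times\d y)$, so subtracting and using that the integrand is pointwise bounded above by $\Phi(v(s,y))$ yields
\[ \int F\,\d Q-\alpha(Q)=\int\left(\int v(s,y)\,\d Q-\alpha(Q)\right)\mu(\d s\times\d y)\leq\int\Phi(v(s,y))\,\mu(\d s\times\d y). \]
Taking the supremum over $Q\in\mathcal{Q}$ on the left-hand side delivers the claimed inequality.

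The main obstacle is establishing the Fenchel--Moreau representation on the weighted space $\Ck$ under the continuity-from-above hypothesis; this is the nonlinear-expectation analogue of the classical Daniell--Stone/Peng representation theorem and is the analytic core of the argument, while the remaining steps reduce to Fubini plus a trivial swap of sup and integral. An elementary alternative that sidesteps the dual representation is to approximate $\mu$ by discrete probability measures $\mu_n=\sum_i\mu(A_i^n)\delta_{(s_i^n,y_i^n)}$ coming from measurable partitions $(A_i^n)_i$, apply finite Jensen $\Phi(\sum_i\mu(A_i^n)v(s_i^n,y_i^n))\leq\sum_i\mu(A_i^n)\Phi(v(s_i^n,y_i^n))$ directly from convexity, and pass to the limit. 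The obstacle in this second route is to arrange a monotone decreasing dominating approximation of the integrated function so that continuity from above, rather than only sequential continuity, transfers the inequality to the integral.
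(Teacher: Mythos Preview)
Your approach is essentially the same as the paper's: invoke a Fenchel--Moreau dual representation of $\Phi$ (the paper cites \cite{BCK19}) and then apply Fubini together with the trivial inequality $\sup_\nu\int(\cdots)\,\d\mu\leq\int\sup_\nu(\cdots)\,\d\mu$. The only minor discrepancy is that the dual representation in this weighted setting runs over the cone $\ca^+_\kappa$ of positive Borel measures with $\int\frac{1}{\kappa}\,\d\nu<\infty$ rather than probability measures, but this does not affect the argument.
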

\begin{proof}
 It follows from~\cite[Theorem~2.2]{BCK19} that
 \[ \Phi(f)=\sup_{\nu\in\ca^+_\kappa}\left(\int_{\R^d}f\,\d\nu-\Phi^*(\nu)\right)
 	\quad\mbox{for all } f\in\Ck, \]
 where $\ca^+_\kappa$ consists of all Borel-measures $\nu\colon\B(\R^d)\to [0,\infty]$
 with $\int_{\R^d}\frac{1}{\kappa}\,\d\nu<\infty$ and
 \[ \Phi^*\colon\ca^+_\kappa\to [0,\infty], \; \nu\mapsto\sup_{f\in\Ck}
 	\left(\int_{\R^d}f\,\d\nu-\Phi(g)\right) \]
 denotes the convex conjugate. We use Fubini's theorem to obtain
 \begin{align*}
  &\Phi\left(\int_{\R_+\times\R^d}u(s+t, \cdot+y)\,\mu(\d s\times\d y)\right) \\
  &=\sup_{\nu\in\ca^+_\kappa}\left(\int_{\R^d}\int_{\R_+\times\R^d}
  	u(s+t, y+z)\,\mu(\d s\times\d y)\,\nu(\d z)-\Phi^*(\nu)\right) \\
  &=\sup_{\nu\in\ca^+_\kappa}\int_{\R_+\times\R^d}\left(\int_{\R^d}
  	u(s+t, y+z)\,\nu(\d z)-\Phi^*(\nu)\right)\mu(\d s\times\d y) \\
  &\leq\int_{\R_+\times\R^d}\Phi(u(s+t, \cdot+y)\,\mu(\d s\times\d y). \qedhere
 \end{align*}
\end{proof}

\bibliographystyle{abbrv}
\bibliography{ConvRate.bib}

\end{document}